\documentclass[3p,10pt]{elsarticle}
\biboptions{sort&compress}
\usepackage{amsmath,amssymb,amsfonts,amsthm}
\usepackage{mathtools}
\usepackage{mathrsfs}
\usepackage{bm}
\usepackage{enumitem}
\usepackage{graphicx}
\usepackage{booktabs}
\usepackage{hyperref}

\hypersetup{
    colorlinks=true,
    linkcolor=blue,
    citecolor=blue,
    urlcolor=blue,
    pdfauthor={Jie Zhao and Minghua Chen}
}

\newtheorem{theorem}{Theorem}[section]
\newtheorem{lemma}[theorem]{Lemma}
\newtheorem{proposition}[theorem]{Proposition}
\newtheorem{corollary}[theorem]{Corollary}
\newtheorem*{coercivitylemma}{Lemma}

\theoremstyle{definition}
\newtheorem{definition}[theorem]{Definition}

\theoremstyle{remark}
\newtheorem{remark}[theorem]{Remark}
\newtheorem*{remark*}{Remark}

\begin{document}

\begin{frontmatter}

% =========================================================
% Title
% =========================================================
% \title{	Regularity up to the boundary for  tempered fractional Laplacian }
\title{Tempered fractional Laplacian: Boundary regularity}

% =========================================================
% Authors
% =========================================================
% 第一版本
\author[aff1]{Jie Zhao}
\ead{jiezhao2024@lzu.edu.cn}

\author[aff1]{Minghua Chen\corref{cor1}}
\ead{chenmh@lzu.edu.cn}

% \author[aff-ros1]{Xavier Ros-Oton\corref{cor1}}
% \ead{xros@icrea.cat}

\cortext[cor1]
{Corresponding author.}

% \author[aff1]{Jie Zhao}

% \cortext[cor1]{Corresponding author.}

% =========================================================
% Affiliations
% =========================================================
\affiliation[aff1]{
  organization={School of Mathematics and Statistics, Lanzhou University},
  city={Lanzhou},
  postcode={730000},
  country={P.R. China}
}

% Xavier Ros-Oton 的单位
% \affiliation[aff-ros1]{
%     organization={ICREA},
%     addressline={Pg. Lluís Companys 23},
%     city={Barcelona},
%     postcode={08010},
%     country={Spain}
% }

\date{}

\begin{abstract}

We study the boundary regularity of solutions for the tempered fractional Laplacian
\[
    (-\Delta)^s_\lambda u=f \quad \text{in }\Omega,
    \qquad
    u=0 \quad \text{in }\mathbb{R}^n\setminus\Omega,
\]
where \(0<s<1\), \(\lambda>0\), and
\(\Omega\subset\mathbb{R}^n\) is a bounded Lipschitz domain
satisfying the exterior ball condition.
The tempered operator is defined by
\[
    (-\Delta)^s_\lambda u(x)
    =
    c_{n,s}\operatorname{P.V.}
    \int_{\mathbb{R}^n}
    \frac{u(x)-u(y)}{|x-y|^{n+2s}}
    e^{-\lambda|x-y|}\,dy,
\]
which was originally proposed in [Koponen, Phys. Rev. E  (1995),  1197-1199].
% Exponential tempering preserves the leading singularity of
% the kernel but destroys its homogeneity, preventing a direct
% application of classical boundary regularity estimates.
% Extending these estimates [Ros-Oton and Serra, J. Math. Pures Appl. (2014), 275--302] to the tempered setting therefore
% requires overcoming the difficulties caused by the loss
% of scale invariance.
By decomposing the operator into the classical fractional
Laplacian and the correction operator \(\mathcal Q_\lambda^s\), and combining
approximation, localization, and iterative arguments
with interpolation inequalities, we establish the global estimate
\[
    \|u\|_{C^s(\mathbb{R}^n)}
    \le C\|f\|_{L^\infty(\Omega)}.
\]
For more regular data, we derive interior and
boundary-weighted Schauder estimates.
In particular, under suitable regularity assumptions on \(f\),
these estimates yield
\[
    |D^\nu u(x)|
    \le C\,\delta(x)^{s-|\nu|},
    \quad |\nu| > 0,
\]
where \(\nu\) is a multi-index and
\(\delta(x)=\operatorname{dist}(x,\partial\Omega)\).
These bounds quantify the possible growth of higher-order
derivatives near the boundary and provide a regularity
foundation for numerical error analysis of the tempered problem.
\end{abstract}

% =========================================================
% Keywords and MSC
% =========================================================
\begin{keyword}
  Tempered fractional Laplacian
%\sep Dirichlet problem
  \sep Schauder estimates
  \sep Boundary regularity
% \sep Nonlocal equations

  % \MSC[2020]
   %35R11
   %\sep 35B65
   %\sep 35J70
   %\sep 47G20
\end{keyword}

\end{frontmatter}

\section{Introduction and main results}\label{sec:introduction}
Anomalous diffusion occurs in a wide range of physical systems, with L\'evy flights providing a useful 
model for transport characterized by heavy-tailed displacements \cite{MetzlerKlafter2000}. 
In the isotropic \(2s\)-stable case, with \(0<s<1\), the corresponding L\'evy process has a 
jump measure with density proportional to \(|z|^{-n-2s}\), and its infinitesimal generator is \(-(-\Delta)^s\) \cite{MeerschaertSikorskii2019}. 
The power-law decay allows long-range jumps but yields infinite variance, which may limit the model's applicability when extremely large displacements are strongly suppressed. 
A natural modification is exponential tempering, which introduces the factor \(e^{-\lambda|z|}\), with \(\lambda>0\), into the stable L\'evy measure, 
yielding a density proportional to \(e^{-\lambda|z|}|z|^{-n-2s}\) \cite{Koponen1995,Rosinski2007}. 
This modification preserves the small-jump singularity of the stable model while suppressing large jumps and ensuring finite variance. 
The associated tempered stable process exhibits stable behavior in the short-time scaling limit and Gaussian behavior under long-time diffusive rescaling \cite{Rosinski2007,BaeumerMeerschaert2010}. 
This exponentially tempered L\'evy measure naturally leads to the tempered fractional Laplacian \cite{DengLiTianZhang2018}.
Further probabilistic and analytic foundations include random-walk limits
\cite{ChakrabartyMeerschaert2011}, tempered fractional calculus
\cite{SabzikarMeerschaertChen2015}, estimates of tempered stable densities
\cite{Sztonyk2010}, and gradient estimates for L\'evy semigroups
\cite{SchillingSztonykWang2012}.

In this paper, we investigate the regularity up to the boundary of solutions to the Dirichlet problem
\begin{equation}\label{eq:1.1}
\begin{cases}
(-\Delta)^s_\lambda u=f
& \text{in }\Omega,\\
u=0
& \text{in }\mathbb{R}^n\setminus\Omega,
\end{cases}
\end{equation}
where $0<s<1$, $\lambda>0$, $\Omega\subset\mathbb{R}^n$ is a bounded domain, and $f\in L^\infty(\Omega)$.

The tempered
fractional Laplacian is defined by\cite{DengLiTianZhang2018, MeerschaertSikorskii2019, WeiWangCaraballo2024}
\begin{equation}\label{eq:1.2}
\begin{aligned}
(-\Delta)^s_\lambda u(x)
&=
c_{n,s}\operatorname{P.V.}
\int_{\mathbb{R}^n}
\frac{u(x)-u(y)}
{|x-y|^{n+2s}}
e^{-\lambda|x-y|}
\,dy\\
&=
\frac{c_{n,s}}{2}
\int_{\mathbb{R}^n}
\frac{2u(x)-u(x+y)-u(x-y)}
{|y|^{n+2s}}
e^{-\lambda|y|}
\,dy, \quad x\in \Omega,
\end{aligned}
\end{equation}
where \(\operatorname{P.V.}\) denotes the Cauchy principal value and
\(c_{n,s}\) is the normalization constant for the classical
fractional Laplacian, given by
\[
c_{n,s}
=
\frac{s\,2^{2s}\Gamma\!\left(\frac{n+2s}{2}\right)}
{\pi^{n/2}\Gamma(1-s)}.
\]

Note that when \(\lambda=0\), 
problem~\eqref{eq:1.1}
reduces to the classical fractional Dirichlet problem\cite{RosOtonSerra2014}
  \begin{equation}\label{eq:1.3}
    \begin{cases}
      (-\Delta)^s u = g & \text{in }\Omega, \\[6pt]
      u=0 & \text{in }\mathbb{R}^n\setminus\Omega,
    \end{cases}
  \end{equation}
  % where
  % \begin{equation}\label{eq:1.4}
  % 	\begin{split}
  % (-\Delta)^s u(x) 
  % &= c_{n,s}\,\text{P.V.}\int_{\mathbb{R}^n} \frac{u(x) - u(y)}{|x - y|^{n+2s}} \,dy\\
  % &= \frac{1}{2} c_{n,s}
  %   \int_{\mathbb{R}^n}
  %   \frac{2u(x)-u(x+y) - u(x-y)}{|y|^{n+2s}}\,dy,
  % \end{split}
% \end{equation}
  \begin{remark}\label{remarkfly}
      For \(n=1\), \(s\neq\frac12\), we can check the tempered fractional Laplacian $(-\Delta)^s_\lambda$
      is characterized by
      \begin{align*}
        \mathcal{F}[(-\Delta)^s_\lambda u]
      =
          \frac{
        (\lambda^2+|\xi|^2)^s
        \cos\!\left(2s\arctan\frac{|\xi|}{\lambda}\right)
        -\lambda^{2s}
        }
        {\cos(\pi s)}\widehat{u}(\xi)
      =
        \frac{
        (\lambda+i|\xi|)^{2s}
        +
        (\lambda-i|\xi|)^{2s}
        -
        2\lambda^{2s}
        }{
        2\cos(\pi s)
        }\widehat{u}(\xi).
      \end{align*}
      When $\lambda=0$, using
      $(\pm i|\xi|)^{2s}=|\xi|^{2s}e^{\pm i\pi s}$,
      the above identity reduces to
      \begin{equation*}
            \mathcal{F}[(-\Delta)^s u]
          =
            |\xi|^{2s}\widehat{u}(\xi),
      \end{equation*}
      where $\mathcal{F}$ denotes the Fourier transform operator
      and $\widehat{u}(\xi)=\mathcal{F}(u)(\xi)$, i.e.,
      $
          \widehat{u}(\xi)
          =
          \int_{\mathbb{R}} e^{i\xi x}u(x)\,dx.
      $
  \end{remark}

{
  % \color{blue}
The untempered theory provides the background for our approach.
Its potential-theoretic and fractional Sobolev foundations are developed in
\cite{18,DiNezzaPalatucciValdinoci2012}, while the extension formulation
\cite{8} offers a complementary local interpretation.
Interior H\"older estimates \cite{Silvestre2006} and regularity theory for
fully nonlinear integro-differential equations \cite{CaffarelliSilvestre2009}
provide basic analytical tools.
Boundary behavior has been studied through the boundary Harnack principle
and the structure of $\alpha$-harmonic functions \cite{2,4}, Kelvin
transformations \cite{5}, and Dirichlet heat-kernel estimates
\cite{ref:classical_half_order,10}.
Explicit calculations for balls and power functions
\cite{ref:ball_torsion,Dyda2012}, as well as geometric properties of
ground-state eigenfunctions for the Cauchy process \cite{16}, further
illustrate the role of domain geometry.

Related nonlinear problems include fractional Fujita equations \cite{1},
mountain-pass solutions \cite{23}, and nonexistence results for fractional
elliptic boundary value problems \cite{12}.
Regularity also plays a central role in fractional free-boundary
\cite{7} and obstacle problems \cite{Silvestre2007}, as well as in density
estimates for nonlocal variational models \cite{22}.
In particular, the fractional Pohozaev identity \cite{19,20} highlights
the importance of controlling the boundary quotient $u/\delta^s$,
where $\delta(x)=\operatorname{dist}(x,\partial\Omega)$.

Numerical methods for the tempered fractional Laplacian have made
substantial progress~\cite{ZhangDengKarniadakis2018,SunNieDeng2021,DuoZhang2019}.
% Numerical methods for the tempered fractional Laplacian have made substantial progress
% \cite{ZhangDengKarniadakis2018,SunNieDeng2021,DuoZhang2019}.
However, the numerical error analyses in these works are generally carried out under prescribed regularity assumptions on the solution, rather than deriving such regularity directly from the equation and its data.
In particular, the numerical experiments of Zhang et al.
\cite{ZhangDengKarniadakis2018} indicate that solutions may exhibit limited regularity, and the authors pointed out 
that this phenomenon requires further theoretical regularity investigation.
Related numerical developments include high-order algorithms for
fractional substantial diffusion with truncated L\'evy flights
\cite{ChenDeng2015} and variational formulations for tempered fractional
problems \cite{DengZhang2018}.
% 正则性理论
Recently, regularity theory for the tempered fractional Laplacian~\eqref{eq:1.1} has been developed in several function space settings.
Wei, Wang, and Caraballo \cite{WeiWangCaraballo2024} introduced tempered fractional Sobolev spaces and established embedding theorems.
K\"uhn \cite{Kuhn2019,Kuhn2022} obtained whole-space and interior Schauder estimates for classes of L\'evy generators that include tempered stable operators.
For homogeneous exterior Dirichlet problems, Borthagaray, Li, and Nochetto
\cite{BorthagarayLiNochetto2024} established boundary regularity in Besov spaces, together with corresponding Sobolev estimates, for a class of operators encompassing tempered fractional Laplacians.
However, these regularity results~\cite{WeiWangCaraballo2024,Kuhn2019,Kuhn2022,BorthagarayLiNochetto2024} do not provide the boundary-weighted pointwise estimates for higher-order derivatives 
that are needed in numerical error analyses~\cite{ZhangDengKarniadakis2018,SunNieDeng2021,DuoZhang2019} for boundary singularities.
Thus, a gap remains between the available numerical analysis and a quantitative description of the boundary behavior of solutions.
This connection between boundary regularity and numerical approximation
motivates the present study of boundary-weighted regularity estimates
for the tempered fractional Laplacian.

% The classical fractional Laplacian provides a useful model for bridging this gap.
When $\lambda=0$, the tempered fractional Laplacian reduces to
the classical fractional Laplacian, for which existing results
illustrate how boundary-weighted regularity estimates can bridge
this gap.
Ros-Oton and Serra \cite{RosOtonSerra2014} established boundary regularity and weighted H\"older estimates, from which pointwise derivative bounds in terms of the distance to the boundary can be obtained under suitable assumptions on the data.
Boundary regularity has also been developed for general stable operators
\cite{RosOtonSerra2016} and fully nonlinear integro-differential equations
\cite{21}.
For finite element approximations of the fractional Laplacian, the link
between solution regularity and convergence is studied in
\cite{AcostaBorthagaray2017,BorthagarayCiarlet2019}.
Using the weighted regularity estimates of~\cite{RosOtonSerra2014}, Chen et al.~
\cite{ChenDengMinShiStynes2024,ChenHanShiYu2026} derived pointwise boundary-distance estimates for higher-order
derivatives, and employed these regularity
estimates in the error analysis of numerical schemes on graded
meshes.
Their work~\cite{ChenDengMinShiStynes2024,ChenHanShiYu2026} illustrates how a quantitative description of boundary singularities can support numerical error estimates and guide the choice of mesh grading.

However, extending this type of regularity analysis from the classical fractional Laplacian~\cite{RosOtonSerra2014} to the tempered setting is nontrivial.
Although exponential tempering preserves the leading singularity of the kernel near the origin, it destroys its homogeneity.
As illustrated in Remark~\ref{remarkfly}, the Fourier symbol no longer has the simple power-law form \(|\xi|^{2s}\), making explicit Fourier inversion and potential-kernel representations less straightforward.
Moreover, whole-space Fourier representations alone do not capture the effect of the exterior Dirichlet condition.
% Therefore, establishing boundary-weighted regularity estimates requires additional control of the tempering-dependent terms arising under localization and rescaling, together with quantitative estimates for the lower-order correction to the classical fractional Laplacian.
To overcome these difficulties, a key step in our analysis is the decomposition
\[
    (-\Delta)^s_\lambda u
    =(-\Delta)^s u-\mathcal Q_\lambda^s u.
\] 
We first establish boundedness and H\"older estimates for the
correction operator \(\mathcal Q_\lambda^s\), treating
the cases \(0<s<\tfrac12\) and \(\tfrac12\le s<1\) separately.
To prove global \(C^s\) regularity for weak solutions
of~\eqref{eq:1.1}, we combine a weak comparison principle
and a barrier argument with an approximation of the kernel
near the origin.
Classical estimates for the fractional Laplacian and an
absorption argument yield uniform \(C^s\) bounds for the
approximate solutions.
To derive the Schauder estimates, we combine the estimates
for \(\mathcal Q_\lambda^s\) with cutoff localization,
nonlocal tail estimates, H\"older interpolation,
and an iterative absorption argument.
A rescaling argument on balls with radii comparable to the
distance to the boundary then yields the weighted estimates.
These estimates extend to distributional solutions by
mollification and passage to the limit.
Under the stated assumptions on the data, the weighted
estimates give pointwise bounds for higher-order derivatives
in terms of the distance to the boundary.
We now state our main results.

% In this paper, we establish quantitative regularity estimates for the tempered fractional Laplacian with homogeneous exterior Dirichlet conditions.
% Under the assumptions of our main results, we first obtain a global \(C^s(\mathbb R^n)\) estimate for bounded right-hand sides and then derive interior and boundary-weighted Schauder estimates for more regular data.
% As a consequence, we obtain

% $$
%     |D^\nu u(x)|
%     \leq C\,\delta(x)^{s-|\nu|},
%     \qquad
%     \delta(x)=\operatorname{dist}(x,\partial\Omega),
% $$
% for the derivative orders allowed by the corresponding regularity assumptions on the data.
% These estimates quantitatively describe the possible growth of higher-order derivatives near the boundary and provide a regularity foundation for numerical error analyses of tempered problems that explicitly account for boundary singularities and require pointwise derivative bounds.
}

\subsection{Main results}

% Throughout the paper, a solution of~\eqref{eq:1.1} means a weak
% solution in the sense of Definition~\ref{def:weak-solution}.

The main contributions of this paper can be summarized as follows.
First, we establish an \(L^\infty\)
estimate and prove the optimal global regularity
\[
u\in C^s(\mathbb{R}^n).
\]
Second, we prove that \(u/\delta^s\) admits a H\"older continuous
extension to \(\overline{\Omega}\), thereby identifying
\(\delta^s\) as the natural boundary profile also in the tempered
setting. Third, we establish interior Schauder estimates and
higher-order weighted H\"older estimates. In particular, under
appropriate weighted regularity assumptions on \(f\), we obtain
\[
|D^\nu u(x)|
\le
C\,\delta(x)^{s-|\nu|}
\qquad
\text{for all }x\in\Omega,
\]
for every multi-index \(\nu\) covered by the corresponding assumptions.

% These results show that the principal interior and boundary regularity
% properties of the fractional Laplacian remain stable under exponential
% tempering, although the exact homogeneity and explicit structure of
% the classical operator are lost. They also provide the regularity
% information needed for the stability, convergence, and error analysis
% of numerical schemes for tempered fractional problems.

% \textcolor{red}
% We recall that a domain $\Omega$ satisfies the exterior ball condition if there exists a positive radius $\rho_0$ such that all the points on $\partial\Omega$ can be touched by some exterior ball of radius $\rho_0$. 
The main results in this work are following 
\begin{theorem}\label{pro:1.1}
  Let $0 < s < 1$ , $\lambda > 0$ and $\Omega \subset \mathbb{R}^n$ be bounded, Lipschitz and satisfy the exterior ball condition, $f\in L^\infty(\Omega)$, 
  and $u$ 
  be a solution of \eqref{eq:1.1}.
  Then, $u \in C^s(\mathbb{R}^n)$ and
  \[
  \|u\|_{C^s(\mathbb{R}^n)}\le C
  \|f\|_{L^\infty(\Omega)},
  \]
  where $C$ is a constant depending only on $n$, $s$, $\lambda$ and $\Omega$.
\end{theorem}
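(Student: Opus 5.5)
The plan is to follow the Ros-Oton--Serra strategy for the classical case, treating the tempering as a perturbation via the decomposition
\[
(-\Delta)^s_\lambda u=(-\Delta)^s u-\mathcal Q_\lambda^s u,\qquad
\mathcal Q_\lambda^s u(x)=c_{n,s}\int_{\mathbb R^n}\bigl(u(x)-u(y)\bigr)\frac{1-e^{-\lambda|x-y|}}{|x-y|^{n+2s}}\,dy .
\]
The kernel of \(\mathcal Q_\lambda^s\) is bounded by \(\lambda|z|^{1-n-2s}\) near the origin and by \(|z|^{-n-2s}\) at infinity. It is therefore a lower-order operator: of order \(2s-1\) when \(s>\tfrac12\), and integrable when \(s<\tfrac12\). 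The argument has three steps.

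\textbf{Step 1 (the \(L^\infty\) bound).} I would prove \(\|u\|_{L^\infty}\le C\|f\|_{L^\infty}\) by a weak comparison principle. The needed barrier is \(w(x)=(R^2-|x-x_0|^2)_+^s\) with \(B_R(x_0)\supset\Omega\). For this function \((-\Delta)^s w\equiv\) const in \(B_R\). The difference \(\mathcal Q_\lambda^s w\) is bounded by \(C\lambda\|w\|_{C^s}\), which is finite because \(s\) exceeds the order of \(\mathcal Q^s_\lambda\) when \(s>1/2\), and the kernel is integrable when \(s<1/2\). After rescaling \(R\) small relative to \(\lambda\), or by directly estimating \(\int(w(x)-w(y))(1-e^{-\lambda|x-y|})|x-y|^{-n-2s}dy\le C\lambda R^{1-?}\), the tempered operator applied to \(w\) is bounded below by a positive constant on \(\Omega\). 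Alternatively, note that \(w\) is maximized at the center, so \(w(x)-w(y)\ge0\) gives the sign directly: \(-\mathcal Q w\) can be negative, and its size must be controlled.

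\textbf{Step 2 (boundary decay \(|u|\le C\|f\|_\infty\delta^s\)).} I would use the exterior ball condition. At each \(x_0\in\partial\Omega\), take the exterior ball \(B_{\rho_0}(z)\) and build a supersolution \(\varphi\) of the tempered operator that vanishes on \(B_{\rho_0}(z)\), grows like \(\operatorname{dist}(x,B_{\rho_0}(z))^s\), and is \(\ge1\) away from it. For the classical operator this is the standard construction, with \((-\Delta)^s\varphi\ge c>0\) near the ball. The correction \(\mathcal Q^s_\lambda\varphi\) is bounded in a neighborhood, because \(\varphi\in C^s\) and the corrected kernel has lower order. Hence \((-\Delta)^s_\lambda\varphi\ge c/2\) in a thin annulus once we localize to a small neighborhood, at the cost of constants depending on \(\lambda,\rho_0\). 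Comparison with \(C\|f\|_\infty\varphi\) then gives the decay.

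\textbf{Step 3 (interior-to-global \(C^s\)).} For \(x\in\Omega\) with \(\delta(x)=2r\), rescale \(u_r(y)=u(x+ry)\) on \(B_1\). It satisfies \((-\Delta)^s u_r=r^{2s}\bigl(f+\mathcal Q^s_\lambda u\bigr)(x+r\cdot)\). Interior estimates for the fractional Laplacian give
\[
[u]_{C^{s}(B_r(x))}\le C\Bigl(r^{s}\|f+\mathcal Q^s_\lambda u\|_{L^\infty(B_{2r})}+r^{-s}\|u\|_{L^\infty(B_{2r})}+r^{-s}\sup_{R\ge r}(R/r)^{-2s}\|u\|_{L^\infty(B_R(x))}\Bigr).
\]
Using Step 2 (\(|u|\le C\delta^s\)), the last two terms are bounded. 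Combining these local bounds with the decay near the boundary gives the global \(C^s\) seminorm, by the usual case split \(|x-y|\lessgtr\delta(x)/2\).

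\textbf{Main obstacle.} The hardest part is bounding \(\|\mathcal Q^s_\lambda u\|_{L^\infty}\) when \(s\ge\tfrac12\), since it requires \(u\in C^{2s-1+\epsilon}\), which is circular. I would first prove a priori estimates for approximate solutions. These come from truncating the kernel near the origin, so that the solutions are smooth enough for all quantities to be finite. I would then use the interpolation bound
\[
\|\mathcal Q^s_\lambda u\|_{L^\infty}\le C\|u\|_{C^{\beta}}\le \varepsilon[u]_{C^s}+C_\varepsilon\|u\|_{L^\infty},\qquad 2s-1<\beta<s,
\]
which is available since \(2s-1<s\). The term \(\varepsilon[u]_{C^s}\) is absorbed into the left-hand side, and the \(\|u\|_{L^\infty}\) term is controlled by Step 1. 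Passing to the limit in the approximation, using uniform bounds and uniqueness of weak solutions, yields the theorem with \(C=C(n,s,\lambda,\Omega)\).
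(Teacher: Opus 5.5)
\textbf{Step 1 fails as justified.} You cannot take $R$ small relative to $1/\lambda$, because $B_R(x_0)\supset\Omega$ is required. For $\lambda R\gg1$ one in fact has $(-\Delta)^s_\lambda w\ll\kappa_{n,s}$ in $B_{R/2}(x_0)$. So $\mathcal Q^s_\lambda w$ nearly cancels the constant $(-\Delta)^s w$, and no upper bound on $|\mathcal Q^s_\lambda w|$ can give positivity for all values of $\lambda\operatorname{diam}\Omega$. Your ``alternatively'' is also wrong: $w(x)-w(y)\ge0$ for all $y$ holds only at the center.

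You need a non-perturbative sign argument. The paper takes $\eta\in C_c^\infty$ with $0\le\eta\le1$, $\eta\equiv1$ on $B_{2D}(x_0)$ and $\eta\equiv0$ off $B_{3D}(x_0)$. Every $x\in\Omega$ is then a global maximum of $\eta$, so the integrand is nonnegative. The annulus $B_{4D}(x_0)\setminus B_{3D}(x_0)$ then gives $(-\Delta)^s_\lambda\eta\ge c\,e^{-5\lambda D}D^{-2s}$ on $\Omega$, with $c=c(n,s)>0$. This barrier works unchanged for the approximating operators, since the integrand stays nonnegative and the approximating kernel dominates the tempered one. Your $w$ can also be rescued. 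It is concave in $B_R$ and vanishes outside, so $2w(x)-w(x+y)-w(x-y)\ge0$ for $x\in B_R$. The region $|y|\ge2R$ then gives $(-\Delta)^s_\lambda w\gtrsim e^{-3\lambda R}$ on $B_{R/2}(x_0)$.

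\textbf{The rest.} Your ``main obstacle'' paragraph is exactly the paper's mechanism. The paper regularizes with $a_{\lambda,\varepsilon}(z)=1$ for $|z|\le\varepsilon$ and $a_{\lambda,\varepsilon}(z)=e^{-\lambda|z|}$ otherwise. The truncated correction $\mathcal Q^s_{\lambda,\varepsilon}$ then has an integrable kernel, so $\mathcal Q^s_{\lambda,\varepsilon}u_\varepsilon\in L^\infty$ follows from the $L^\infty$ bound alone; this is what breaks the circularity. Your ``truncating the kernel'' must mean truncating the correction in this way, not the whole kernel. The paper then absorbs $\|\mathcal Q^s_{\lambda,\varepsilon}u_\varepsilon\|_{L^\infty}\le C(\rho^{1-s}[u_\varepsilon]_{C^s}+\rho^{-2s}\|u_\varepsilon\|_{L^\infty})$, uniformly in $\varepsilon$. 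It passes to the limit by weak compactness, uniqueness and Arzel\`a--Ascoli.

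Where you genuinely differ is Steps 2 and 3: you re-derive, for the tempered operator, the Ros-Oton--Serra proof of the classical global $C^s$ estimate. That is viable, but Step 2 needs more than boundedness of $\mathcal Q^s_\lambda\varphi$. You must work with $\varphi_r=\varphi(\cdot/r)$, for which $(-\Delta)^s\varphi_r\gtrsim r^{-2s}$ on the annulus while $\|\mathcal Q^s_\lambda\varphi_r\|_{L^\infty}=o(r^{-2s})$. Then fix $r$ small depending on $\lambda$, and check this uniformly for the approximating operators.

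The paper skips all of this. Once $\mathcal Q^s_{\lambda,\varepsilon}u_\varepsilon$ is moved to the right-hand side, $u_\varepsilon$ solves a classical Dirichlet problem with bounded data. The classical global estimate then gives $\|u_\varepsilon\|_{C^s(\mathbb R^n)}\le C_1(\|f\|_{L^\infty}+\|\mathcal Q^s_{\lambda,\varepsilon}u_\varepsilon\|_{L^\infty})$ with the boundary behaviour built in, so no tempered boundary barrier is ever needed. Your route is more self-contained. It yields $|u|\le C\|f\|_{L^\infty}\delta^s$ for the tempered problem without reference to $[u]_{C^s}$, at the cost of redoing the classical theory.
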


{
\begin{corollary}
\label{cor:tempered-boundary-quotient}
Let \(0<s<1\), \(\lambda>0\), and let
\(\Omega\subset\mathbb R^n\) be a bounded \(C^{1,1}\) domain. Suppose
that \(f\in L^\infty(\Omega)\) and that \(u\) is a solution of
\eqref{eq:1.1}. If
\(\delta(x):=\operatorname{dist}(x,\partial\Omega)\), then there exists
\(\alpha\in(0,\min\{s,1-s\})\) such that
\[
\frac{u}{\delta^s}\in C^\alpha(\overline\Omega)
\]
and
\[
\left\|\frac{u}{\delta^s}\right\|_{C^\alpha(\overline\Omega)}
\le C\|f\|_{L^\infty(\Omega)},
\]
where \(C\) is a constant depending only on \(n,s,\lambda\), and \(\Omega\).
\end{corollary}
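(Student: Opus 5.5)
The plan is to reduce the statement to the classical boundary quotient result of Ros-Oton and Serra \cite{RosOtonSerra2014}, which says that if $(-\Delta)^s w=g$ in a bounded $C^{1,1}$ domain $\Omega$, $w=0$ outside $\Omega$, and $g\in L^\infty(\Omega)$, then $w/\delta^s\in C^\alpha(\overline\Omega)$ for some $\alpha\in(0,\min\{s,1-s\})$, with $\|w/\delta^s\|_{C^\alpha(\overline\Omega)}\le C\|g\|_{L^\infty(\Omega)}$. The reduction uses the decomposition $(-\Delta)^s_\lambda=(-\Delta)^s-\mathcal Q_\lambda^s$ announced in the introduction. Here
\[
\mathcal Q_\lambda^s u(x)=c_{n,s}\int_{\mathbb R^n}\bigl(u(x)-u(y)\bigr)\frac{1-e^{-\lambda|x-y|}}{|x-y|^{n+2s}}\,dy .
\]
If $u$ solves \eqref{eq:1.1}, then $u$ solves the classical problem \eqref{eq:1.3} with right-hand side $g:=f+\mathcal Q_\lambda^s u$. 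It therefore suffices to show that $g\in L^\infty(\Omega)$ with $\|g\|_{L^\infty(\Omega)}\le C\|f\|_{L^\infty(\Omega)}$.

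\textbf{Step 1: bounding $\mathcal Q_\lambda^s u$ in $L^\infty$ using Theorem~\ref{pro:1.1}.} A bounded $C^{1,1}$ domain is Lipschitz and satisfies the exterior ball condition, so Theorem~\ref{pro:1.1} applies and gives $\|u\|_{C^s(\mathbb R^n)}\le C\|f\|_{L^\infty(\Omega)}$. The kernel of $\mathcal Q_\lambda^s$ satisfies $0\le (1-e^{-\lambda r})r^{-n-2s}\le \min\{\lambda r^{1-n-2s},\,r^{-n-2s}\}$. We split the integral at $|x-y|=1$.
\begin{itemize}[leftmargin=*]
\item Near the diagonal, $|u(x)-u(y)|\le [u]_{C^s}|x-y|^s$, so the integrand is bounded by $\lambda [u]_{C^s} r^{1-s-n}$. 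This is integrable on $B_1$ for every $s<1$.
\item Far from the diagonal, the integrand is bounded by $2\|u\|_{L^\infty}r^{-n-2s}$, which is integrable on $\{r>1\}$.
\end{itemize}
Hence $\|\mathcal Q_\lambda^s u\|_{L^\infty(\mathbb R^n)}\le C(n,s,\lambda)\|u\|_{C^s(\mathbb R^n)}\le C\|f\|_{L^\infty(\Omega)}$. This argument needs no case distinction between $s<\tfrac12$ and $s\ge\tfrac12$, because $s$-Hölder continuity is enough to make the first-order tempering correction integrable.

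\textbf{Step 2: justifying that the equation transfers.} This is the only delicate point. The Ros-Oton--Serra theorem is stated for weak (or viscosity) solutions of the classical problem. We must therefore check that, in the paper's weak sense, $u$ is a weak solution of \eqref{eq:1.3} with $g=f+\mathcal Q_\lambda^s u$.

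At the level of bilinear forms, the tempered form equals the classical form minus the symmetric form associated with the bounded-at-infinity kernel $(1-e^{-\lambda r})r^{-n-2s}$. Testing against $\varphi\in C_c^\infty(\Omega)$ and using the pointwise boundedness from Step 1 together with Fubini (dominated convergence applies for the same reasons as in Step 1), we identify the correction form with $\int \mathcal Q_\lambda^s u\,\varphi$. Since $u\in H^s$ with $u=0$ outside $\Omega$, $u$ is the unique weak solution of the classical problem with this datum $g\in L^\infty$.

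Applying \cite[Theorem 1.2]{RosOtonSerra2014} then gives the conclusion, with $\alpha$ and $C$ depending only on $n,s,\lambda$ and $\Omega$. The constant $\lambda$ enters through Step 1 and through Theorem~\ref{pro:1.1}.

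I expect the main obstacle to be Step 2, namely matching solution notions. If the paper's notion of solution is distributional rather than weak, I would first mollify $f$, apply the argument to the smooth approximate solutions, where all identities hold pointwise, and pass to the limit. The uniform bounds make the limit work: the $C^\alpha$ bounds on $u_k/\delta^s$ are uniform in $k$, and Arzelà--Ascoli yields the estimate for $u$.
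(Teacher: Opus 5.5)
Your proposal is correct and follows essentially the paper's own argument: use Theorem~\ref{pro:1.1} to get $\|u\|_{C^s(\mathbb R^n)}\le C\|f\|_{L^\infty(\Omega)}$, bound $\mathcal Q_\lambda^s u$ in $L^\infty$ by the same quantity, and apply the Ros-Oton--Serra boundary quotient theorem to $(-\Delta)^s u=f+\mathcal Q_\lambda^s u$. The differences are cosmetic. Your single $C^s$-based bound is the argument of Lemma~\ref{lemmaQL}(ii)--(iii) with $\gamma=s$, which works for every $s<1$, and it replaces the paper's case split between Lemma~\ref{lemmaQL}(i) and (iii). Your Fubini-plus-density check of the weak formulation makes explicit what the paper takes directly from \eqref{eqQ}.
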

}

Throughout the paper, and when no confusion is 
possible, we use the notation 
$C^\beta(U)$ with $\beta>0$ to 
refer to the space $C^{k,\beta'}(U)$, 
where $k$ is the greatest integer 
such that $k<\beta$ and where $\beta'=\beta-k$. 
This notation is especially appropriate when 
we work with $(-\Delta)^s_\lambda$ in order to avoid the splitting of different cases in the statements of regularity results. According to this, $[\cdot]_{C^\beta(U)}$ denotes the $C^{k,\beta'}(U)$ seminorm

\[
[u]_{C^\beta(U)}=[u]_{C^{k,\beta'}(U)}=\sup_{x,y\in U,\,x\ne y}\frac{|D^k u(x)-D^k u(y)|}{|x-y|^{\beta'}}.
\]

Moreover, given an open set $U\subset\mathbb{R}^n$ with $\partial U\ne\emptyset$, we will also denote
\[
d_x=\text{dist}(x,\partial U)\quad\text{and}\quad d_{x,y}=\min\{d_x,d_y\}.
\]

We recall first some well known interior 
regularity results for linear equations 
involving the operator $(-\Delta)^s$, 
defined by~\eqref{eq:1.2}. 
The first one states that 
$w\in C^{\beta+2s}(B_{1/2})$ whenever 
$w\in C^\beta(\mathbb{R}^n)$ and $(-\Delta)^s w\in C^\beta(B_1)$. Recall that, throughout this section and in all the paper, we denote by $C^\beta$, with $\beta>0$, the space $C^{k,\beta'}$, where $k$ is an integer, $\beta'\in(0,1]$, and $\beta=k+\beta'$.

We use the boundary-weighted H\"older norms of
\cite{RosOtonSerra2014}, following the classical elliptic framework
of~\cite{14}.

  \begin{definition}\label{Definition_tempered1.3}
      Let $\beta>0$ and $\sigma\ge-\beta$. 
      Let $\beta=k+\beta'$, 
      with $k$ integer and $\beta'\in(0,1]$. For $w\in C^\beta(U)=C^{k,\beta'}(U)$, define the seminorm
      \[
      [w]_{\beta;U}^{(\sigma)}=\sup_{x,y\in U}\left(d_{x,y}^{\beta+\sigma}\frac{|D^k w(x)-D^k w(y)|}{|x-y|^{\beta'}}\right).
      \]
      For $\sigma>-1$, we also define the norm $\|\cdot\|_{\beta;U}^{(\sigma)}$ as follows: in case that $\sigma\ge0$,
      \[
      \|w\|_{\beta;U}^{(\sigma)}=\sum_{l=0}^{k}\sup_{x\in U}\left(d_x^{l+\sigma}|D^l w(x)|\right)+[w]_{\beta;U}^{(\sigma)},
      \]
      while for $-1<\sigma<0$,
      \[
      \|w\|_{\beta;U}^{(\sigma)}=\|w\|_{C^{-\sigma}(U)}+\sum_{l=1}^{k}\sup_{x\in U}\left(d_x^{l+\sigma}|D^l w(x)|\right)+[w]_{\beta;U}^{(\sigma)}.
      \]

      % Note that $\sigma$ is the rescale order of the seminorm $[\cdot]_{\beta;U}^{(\sigma)}$, in the sense that $[w(\lambda\cdot)]_{\beta;U/\lambda}^{(\sigma)}=\lambda^\sigma[w]_{\beta;U}^{(\sigma)}$.

      % When $g$ is H\"older continuous, the next result provides optimal estimates for higher order H\"older norms of $u$ up to the boundary.
  \end{definition}

\begin{theorem}\label{Pro:1.4}
Let $0<s<1$,  $\Omega \subset \mathbb{R}^n$ be a bounded domain, 
and $\beta>0$ be such that neither $\beta$ nor $\beta+2s$ is an integer. 
Let $f\in C^\beta(\Omega)$ be such that $\|f\|_{\beta;\Omega}^{(s)}<\infty$, 
and $u\in C^s(\mathbb{R}^n)$ 
be a solution of~\eqref{eq:1.1}. Then, $u\in C^{\beta+2s}(\Omega)$ and
\[
\|u\|_{\beta+2s;\Omega}^{(-s)}\le C
% \,e^{\lambda \operatorname{diam}\Omega}
\left(\|u\|_{C^s(\mathbb{R}^n)}+\|f\|_{\beta;\Omega}^{(s)}\right),
\]
where $C$ is a constant depending only on $\Omega$, $s$, $\beta$ and $\lambda$.
\end{theorem}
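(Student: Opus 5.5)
The plan is to follow the scheme of Ros-Oton and Serra \cite{RosOtonSerra2014} for the weighted estimate $\|u\|_{\beta+2s;\Omega}^{(-s)}$, treating the tempering through the decomposition $(-\Delta)^s_\lambda u=(-\Delta)^s u-\mathcal Q_\lambda^s u$. Here
\[
\mathcal Q_\lambda^s u(x)=c_{n,s}\int_{\mathbb R^n}\frac{(u(x)-u(y))(1-e^{-\lambda|x-y|})}{|x-y|^{n+2s}}\,dy,
\]
whose kernel is $O(\lambda|z|^{1-n-2s})$ near $0$ and integrable at infinity, so it is of order $2s-1$. We rewrite the equation as $(-\Delta)^s u=g:=f+\mathcal Q_\lambda^s u$ in $\Omega$ and want to apply the classical weighted Schauder estimate. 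The difficulty is that $\mathcal Q_\lambda^s u$ is only as regular as $u$ allows, so an iteration (bootstrap) in $\beta$ is needed.

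\emph{Step 1: scaled interior estimate.} Fix $x_0\in\Omega$ and set $r=d_{x_0}/2$, $B_r=B_r(x_0)$. Localize with a cutoff $\eta\in C_c^\infty(B_{2r})$, $\eta\equiv1$ on $B_{3r/2}$, and write $u=\eta u+(1-\eta)u$. Then $(-\Delta)^s(\eta u)=g-(-\Delta)^s((1-\eta)u)$ in $B_r$, and the tail term is smooth in $B_r$. Its $C^\beta(B_r)$ norm is bounded by $C r^{-2s-\beta}\int |u(y)|\,|y-x_0|^{-n-2s}\ldots$, which by $u\in C^s(\mathbb R^n)$ and $u=0$ outside $\Omega$ gives $C r^{-s-\beta}\|u\|_{C^s}$. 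Next rescale $v(z)=(\eta u)(x_0+rz)$. The classical interior Schauder estimate ($w\in C^{\beta}$, $(-\Delta)^s w\in C^\beta(B_1)\Rightarrow w\in C^{\beta+2s}(B_{1/2})$) then gives
\[
r^{\beta+2s}[u]_{C^{\beta+2s}(B_{r/2})}\le C\bigl(r^{s}\|u\|_{C^s}+r^{2s}\|g\|_{L^\infty(B_r)}+r^{2s+\beta}[g]_{C^\beta(B_r)}\bigr),
\]
with $C$ independent of $r$, since $r\le\operatorname{diam}\Omega$. Using $\|f\|_{\beta;\Omega}^{(s)}$, the $f$-part contributes $r^{s}\|f\|^{(s)}_{\beta;\Omega}$, which matches the $(-s)$ weight on the left.

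\emph{Step 2: control of $\mathcal Q_\lambda^s u$.} I would use the boundedness and H\"older estimates for $\mathcal Q_\lambda^s$ proved earlier, separated into $s<\tfrac12$ and $s\ge\tfrac12$, in the localized form: for $\gamma>2s-1$, $\|\mathcal Q_\lambda^s u\|_{C^{\gamma-(2s-1)}(B_r)}$ (or $C^\gamma$ when $2s-1<0$) is bounded by $\|u\|_{C^\gamma(B_{2r})}$ plus a tail controlled by $\|u\|_{C^s(\mathbb R^n)}$. Combining this with interpolation,
\[
[u]_{C^{\gamma}(B_{2r})}\le\varepsilon\, r^{\beta+2s-\gamma}[u]_{C^{\beta+2s}}+C_\varepsilon r^{s-\gamma}\|u\|_{C^s},
\]
makes the $\mathcal Q$-contribution to the right side of Step 1 equal to $\varepsilon$ times the weighted seminorm $\sup_{x_0} r^{\beta+s}[u]_{C^{\beta+2s}(B_{r/2}(x_0))}$, plus lower-order terms. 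Since $\mathcal Q_\lambda^s$ has order $2s-1<2s$, there is the extra factor $r^{1}\lambda$ coming from rescaling, and because $r$ is bounded this is harmless. Taking the supremum over $x_0$ and absorbing, in the spirit of Gilbarg--Trudinger, gives $[u]^{(-s)}_{\beta+2s;\Omega}$. Absorption requires finiteness of the weighted seminorm a priori. I would secure this by first proving the estimate for mollified solutions, or by an iteration $\beta_0<\beta_1<\dots$ that raises regularity by a fixed amount $\min\{1,2-2s\}$ per step, each step only using an already finite norm. The lower-order weighted terms $\sup d_x^{l-s}|D^lu|$ then follow by interpolation between $\|u\|_{C^s}$ and the top seminorm, and chaining balls gives $[\cdot]$ for pairs $x,y$ far apart relative to $d_{x,y}$.

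\emph{Main obstacle.} The hardest step should be Step 2: obtaining a H\"older estimate for $\mathcal Q_\lambda^s u$ on $B_r$ that is local with correct $r$-scaling. This is delicate when $s\ge\tfrac12$, because the kernel is not integrable and one needs $C^{1,\cdot}$ control of $u$ near $x_0$. The weights must also match $r^{s}$ without loss despite the missing homogeneity. The hypotheses that $\beta$ and $\beta+2s$ are non-integer are used for the classical Schauder estimate and for the H\"older interpolation.
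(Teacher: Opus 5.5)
Your argument is correct in outline, but it is organized differently from the paper's. The paper never puts $\mathcal Q_\lambda^s u$ into a weighted norm. It first proves a unit-scale interior Schauder estimate for the tempered operator itself (Lemma~\ref{tempered-RO2.2}). For $s\ge\tfrac12$ it absorbs $\mathcal Q_\lambda^s v$ there through a translation estimate, the interpolation $\|v\|_{C^{\beta+\theta}}\le\varepsilon\|v\|_{C^{\beta+2s}}+C_\varepsilon\|v\|_{C^\beta}$ with $2s-1<\theta<1$, and an iteration over nested balls. It then localizes with cutoffs and tails (Lemmas~\ref{tempered2.4} and~\ref{tempered2.4-general}). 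The weighted bound (Lemma~\ref{tempered2.10} with $\alpha=s$) comes from rescaling $\widetilde w(y)=u(x_0+Ry)-u(x_0)$, $R=d_{x_0}/2$, which turns the operator into $(-\Delta)^s_{\lambda R}$, followed by the near/far-pair argument and mollification.

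You instead keep the classical operator, treat $g=f+\mathcal Q_\lambda^s u$ as data on Whitney-type balls, and absorb the correction either globally in $[u]^{(-s)}_{\beta+2s;\Omega}$ or by a finite bootstrap in the classical weighted estimate. Your route needs only untempered Schauder theory and makes the smallness of the correction explicit through the extra factor $r$ coming from its lower order. It also sidesteps a point the paper's scaling step leaves implicit: the constants in its unit-scale tempered lemmas must be uniform in the rescaled parameter $\lambda R\le\lambda\operatorname{diam}\Omega$. This is true, since every constant there stays bounded for the tempering parameter in a bounded range, but the paper does not say so. In exchange, the paper's route yields a self-contained tempered interior estimate. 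Of your two closing options the bootstrap is cleaner, since it requires no a priori finiteness of the weighted seminorm.

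Three details need fixing, none of which affects the strategy.

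First, with $r=d_{x_0}/2$ the ball $B_{2r}(x_0)=B_{d_{x_0}}(x_0)$ touches $\partial\Omega$. So $\|u\|_{C^\gamma(B_{2r})}$ with $\gamma>s$ is in general infinite (think of $u\sim\delta^s$) and is not controlled by weighted norms. Take the near-field radius in $\mathcal Q_\lambda^s$ at most $r/2$, so that only $B_{3r/2}(x_0)$, on which $d\ge r/2$, enters; likewise do not cut off at $B_{2r}$.

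Second, the classical interior estimate you quote has $\|w\|_{C^\beta(\mathbb R^n)}$ on the right. To end up with only $r^s\|u\|_{C^s}$ you need its variant with a lower-order local norm plus the weighted $L^1$ tail (as in Ros-Oton--Serra), applied to $u-u(x_0)$. Subtracting $u(x_0)$ also keeps your tail bound valid for mollified functions, which no longer vanish outside $\Omega_\varepsilon$.

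Third, the gain per bootstrap step is $\min\{2s,1\}$ up to an arbitrarily small loss, not $\min\{1,2-2s\}$. For $s<\tfrac12$ the term $u(x)\int K_\lambda^s$ in $\mathcal Q_\lambda^s u$ is no smoother than $u$, so each step gains only $2s$.
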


\begin{theorem}\label{lemma1.6-fourth}
  Let $0<s<1$, $\lambda>0$, and let $\Omega\subset\mathbb{R}^n$
  be a bounded Lipschitz domain satisfying the exterior ball condition.
  Let $\beta=4-2s+\gamma$, where
    \[
        \begin{cases}
        0<\gamma<2s, & 0<s\le \dfrac12,\\[0.4em]
        0<\gamma<2s-1, & \dfrac12<s<1.
        \end{cases}
    \]
    Assume that \(f\in L^\infty(\Omega)\cap C^\beta(\Omega)\) satisfies
    $
        \|f\|_{\beta;\Omega}^{(s)}<\infty,
    $
    and let \(u\) be a solution of~\eqref{eq:1.1}. Then, for every
    multi-index \(\nu\) with \(0\le |\nu|\le 4\),
    \[
        |D^\nu u(x)|
        \le
        C\delta(x)^{s-|\nu|}
        \qquad \text{for }x\in\Omega.
    \]
    Moreover, for every multi-index \(\nu\) with \(0\le |\nu|\le 2\),
    \[
        |D^\nu f(x)|
        \le
        C\delta(x)^{-s-|\nu|}
        \qquad \text{for }x\in\Omega.
    \]
    Here \(C\) is a positive constant depending only on
    \(\Omega,n,s,\lambda,\beta\) and \(f\).
\end{theorem}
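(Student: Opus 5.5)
The plan is to combine the global estimate of Theorem~\ref{pro:1.1} with the weighted Schauder estimate of Theorem~\ref{Pro:1.4}, and then read off pointwise derivative bounds from the definition of the weighted norm $\|\cdot\|^{(-s)}_{\beta+2s;\Omega}$.

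\textbf{Step 1 (global $C^s$ bound).} Since $\Omega$ is bounded, Lipschitz and satisfies the exterior ball condition, and $f\in L^\infty(\Omega)$, Theorem~\ref{pro:1.1} gives
\[
\|u\|_{C^s(\mathbb R^n)}\le C\|f\|_{L^\infty(\Omega)}.
\]
In particular, $u$ is an admissible solution for Theorem~\ref{Pro:1.4}.

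\textbf{Step 2 (checking the hypotheses of Theorem~\ref{Pro:1.4}).} With $\beta=4-2s+\gamma$ we have $\beta+2s=4+\gamma$.
\begin{itemize}
\item[(i)] $\beta+2s$ is not an integer: since $0<\gamma<1$ in both cases ($2s\le1$ when $s\le\frac12$, and $2s-1<1$ when $s>\frac12$), we get $4<\beta+2s<5$.
\item[(ii)] $\beta$ is not an integer: this is equivalent to $\gamma-2s\notin\mathbb Z$. For $s\le\frac12$ we have $-1\le -2s<\gamma-2s<0$, so this holds. For $s>\frac12$ we have $\gamma-2s\in(-2s,-1)\subset(-2,-1)$, so it also holds.
\end{itemize}
This explains the restriction on $\gamma$. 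Theorem~\ref{Pro:1.4} then yields $u\in C^{4+\gamma}(\Omega)$ and
\[
\|u\|^{(-s)}_{4+\gamma;\Omega}\le C\bigl(\|f\|_{L^\infty}+\|f\|^{(s)}_{\beta;\Omega}\bigr)<\infty .
\]

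\textbf{Step 3 (bounds on $u$).} By Definition~\ref{Definition_tempered1.3} with $\sigma=-s\in(-1,0)$ and $k=4$, the norm contains $\|u\|_{C^s(\Omega)}$ and the terms $\sup_x d_x^{\,l-s}|D^lu(x)|$ for $l=1,\dots,4$. Since $d_x=\delta(x)$, this gives
\[
|D^\nu u(x)|\le C\delta(x)^{s-|\nu|},\qquad 1\le|\nu|\le4 .
\]
For $\nu=0$ we use $u=0$ on $\partial\Omega$ and $u\in C^s$ to get $|u(x)|\le [u]_{C^s}\,\delta(x)^s$.

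\textbf{Step 4 (bounds on $f$).} The bound for $f$ comes from $\|f\|^{(s)}_{\beta;\Omega}$, where $\sigma=s\ge0$ and $k=\lfloor\beta\rfloor\ge2$. For $s\le\frac12$ we have $\beta>3$. For $s>\frac12$ we have $\beta=4-2s+\gamma>2$, so $k\ge2$ in all cases. The definition then gives $d_x^{\,l+s}|D^lf(x)|\le\|f\|^{(s)}_{\beta;\Omega}$ for $l\le k$, hence
\[
|D^\nu f(x)|\le C\delta(x)^{-s-|\nu|},\qquad |\nu|\le2 .
\]

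\textbf{Main obstacle.} Once Theorems~\ref{pro:1.1} and~\ref{Pro:1.4} are available, the argument is bookkeeping. The only delicate point is the integer-avoidance check in Step~2, which is exactly what dictates the two ranges for $\gamma$.
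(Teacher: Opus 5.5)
Your proposal is correct and follows the paper's proof essentially step for step. You use Theorem~\ref{pro:1.1} for the $C^s$ bound, apply Theorem~\ref{Pro:1.4} after checking that $\beta,\beta+2s\notin\mathbb Z$, and then read the bounds on $D^\nu u$ and $D^\nu f$ directly off the weighted norms; the case $\nu=0$ is handled, as in the paper, via $u=0$ at a nearest boundary point. One minor correction to your side remark: integer-avoidance alone does not force the stated ranges of $\gamma$ (for $s>\tfrac12$, any $\gamma\in(0,1)\setminus\{2s-1\}$ would already work), but this does not affect the argument.
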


In the following result, the meaning of ``bounded solution'' is that of ``bounded weak solution'' (see Definition~\ref{def:weak-solution}). 
% By Remark~2.11, these two notions coincide. 
Also, by $f\in C_{\text{loc}}^{0,1}(\Omega\times\mathbb{R})$ we mean that $f$ is Lipschitz in every compact subset of $\Omega\times\mathbb{R}$.
Throughout the paper, by the notation $A \lesssim B$ we mean that 
$A$ can be bounded by a multiple of $B$, independently of the parameters 
they may depend on, while the expression $A \simeq B$ means that 
$A \lesssim B \lesssim A$.

The remainder of the paper is organized as follows. Section~\ref{sec:algebraic} records
the algebraic and variational properties of the tempered fractional
Laplacian. Section~\ref{sec:correction} studies the correction operator
\(\mathcal Q_\lambda^s\), which relates the tempered operator to the
classical fractional Laplacian. These tools are
used in Section~\ref{sec:global-regularity} to prove the global \(C^s\) estimate and the boundary
regularity of \(u/\delta^s\). Section~\ref{sec:schauder} develops the interior and weighted
Schauder estimates, and Section~\ref{sec:higher-order} applies them to derive the stated
higher-order pointwise bounds.

% \section{Basic properties of the tempered fractional Laplacian}

\section{Some properties of the tempered operator
\texorpdfstring{$(-\Delta)^s_\lambda$}{the tempered fractional Laplacian}}\label{sec:algebraic}

This section introduces the variational framework for the tempered
Dirichlet problem and records the structural properties of the tempered
fractional Laplacian used below. In particular, we define weak solutions
and collect the operator identities and comparison tools needed in
Sections~\ref{sec:global-regularity} and~\ref{sec:schauder}.

  For $s\in(0,1)$ and $\lambda>0$, 
  we define the tempered fractional Laplacian of $\eta:\mathbb R^n\to\mathbb R$ by
\begin{equation}
  (-\Delta)^s_\lambda \eta(x)
  := c_{n,s}\,\mathrm{P.V.}\int_{\mathbb R^n}
  \frac{\eta(x)-\eta(y)}{|x-y|^{n+2s}}\,e^{-\lambda|x-y|}\,dy,
  \qquad x\in\mathbb R^n.
\end{equation}

\begin{definition}\label{def:weak-solution}
Let \(\Omega\subset\mathbb R^n\) be a bounded open set, and fix
\(s\in(0,1)\) and \(\lambda>0\). Define
\[
\widetilde H^s(\Omega)
:=
\bigl\{
u\in H^s(\mathbb R^n):
u=0\ \text{a.e. in }\mathbb R^n\setminus\Omega
\bigr\}.
\]
For \(u,v\in H^s(\mathbb R^n)\), define the tempered bilinear form
\[
\mathcal E_\lambda(u,v)
:=
\frac{c_{n,s}}{2}
\iint_{\mathbb R^n\times\mathbb R^n}
\frac{(u(x)-u(y))(v(x)-v(y))}{|x-y|^{n+2s}}
e^{-\lambda|x-y|}
\,dx\,dy.
\]
Since \(0<e^{-\lambda|x-y|}\le1\), this form is well defined on
\(H^s(\mathbb R^n)\times H^s(\mathbb R^n)\). Given
\(f\in(\widetilde H^s(\Omega))'\), we say that
\(u\in\widetilde H^s(\Omega)\) is a weak solution of~\eqref{eq:1.1} if
\[
\mathcal E_\lambda(u,\varphi)
=
\langle f,\varphi\rangle
\qquad
\text{for every }\varphi\in\widetilde H^s(\Omega),
\]
where \(\langle\cdot,\cdot\rangle\) denotes the duality pairing between
\((\widetilde H^s(\Omega))'\) and \(\widetilde H^s(\Omega)\).
\end{definition}

\begin{proposition}
  Let $s\in(0,1)$ and $\lambda>0$.  
  Then the operator $(-\Delta)^s_\lambda$ defined in \eqref{eq:1.2}
  satisfies the following properties.

\begin{enumerate}[label=(\roman*)]
  \item For all $\alpha,\beta\in\mathbb R$ and 
  $\eta,\xi \in C_c^\infty(\mathbb R^n)$,
  \begin{equation}\label{xianxin}
    (-\Delta)^s_\lambda(\alpha \eta+\beta \xi)
    = \alpha(-\Delta)^s_\lambda \eta
      + \beta(-\Delta)^s_\lambda \xi
    \quad\text{in }\mathbb R^n.
  \end{equation}

  \item For every constant $c\in\mathbb R$,
  \[
    (-\Delta)^s_\lambda c \equiv 0
    \quad\text{in }\mathbb R^n.
  \]

  \item For any $x_0\in\mathbb R^n$ 
  and $\eta\in C_c^\infty(\mathbb R^n)$, define
  $\eta_{x_0}(x):=\eta(x+x_0)$. Then
  \[
      (-\Delta)^s_\lambda \eta_{x_0}(x)
    =  
       ((-\Delta)^s_\lambda \eta)(x+x_0),
      \qquad x\in\mathbb R^n .
  \]

  \item For any $r>0$ and 
  $\eta\in C_c^\infty(\mathbb R^n)$, define
  $\eta_r(x):=\eta(rx)$. Then
  \begin{equation*}
     (-\Delta)^s_\lambda \eta_r(x)
    = 
      r^{2s}\,((-\Delta)^s_{\lambda/r} \eta)(rx),
      \qquad x\in\mathbb R^n .
  \end{equation*}
\end{enumerate}
\end{proposition}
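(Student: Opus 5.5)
The four properties are elementary consequences of the integral definition, and I will verify them in the stated order. Throughout, $\eta\in C_c^\infty(\mathbb R^n)$, so the second-difference form in \eqref{eq:1.2} is absolutely convergent. Near the origin we have $|2\eta(x)-\eta(x+y)-\eta(x-y)|\le \|D^2\eta\|_\infty |y|^2$, and $e^{-\lambda|y|}\le 1$, so the integrand is bounded by $C|y|^{2-n-2s}$, which is integrable since $s<1$. Away from the origin the integrand is bounded by $4\|\eta\|_\infty|y|^{-n-2s}$. Hence the principal value coincides with this absolutely convergent integral, and we may manipulate it freely (split sums, change variables). This is the only point requiring care; the rest is bookkeeping.

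For (i), I will insert $\alpha\eta+\beta\xi$ into the symmetric form of \eqref{eq:1.2}. The second difference is linear in the function, so the integrand splits into $\alpha$ times the integrand for $\eta$ plus $\beta$ times the integrand for $\xi$. Both pieces are absolutely integrable by the bound above, so linearity of the Lebesgue integral gives \eqref{xianxin}. For (ii), the second difference of a constant vanishes identically, so the integral is zero. This also holds directly in the principal-value form, since $c-c=0$; no compact support is needed here.

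For (iii), note that $\eta_{x_0}(x\pm y)=\eta(x+x_0\pm y)$. Substituting this into the symmetric form, the integral for $(-\Delta)^s_\lambda\eta_{x_0}(x)$ is literally the integral defining $(-\Delta)^s_\lambda\eta$ at the point $x+x_0$. The kernel $|y|^{-n-2s}e^{-\lambda|y|}$ depends only on $y$, so no change of variables is even needed.

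For (iv), I will write
\[
(-\Delta)^s_\lambda\eta_r(x)=\frac{c_{n,s}}{2}\int_{\mathbb R^n}\frac{2\eta(rx)-\eta(rx+ry)-\eta(rx-ry)}{|y|^{n+2s}}e^{-\lambda|y|}\,dy
\]
and substitute $z=ry$, so that $dy=r^{-n}dz$ and $|y|^{-n-2s}=r^{n+2s}|z|^{-n-2s}$. The factor becomes $r^{2s}$, and the exponential becomes $e^{-(\lambda/r)|z|}$. The resulting integral is exactly $r^{2s}$ times the symmetric form of $(-\Delta)^s_{\lambda/r}\eta$ evaluated at $rx$. Absolute convergence justifies the change of variables. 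There is no real obstacle anywhere. The only subtle point is the observation in (iv) that the tempering parameter rescales to $\lambda/r$, which is precisely the loss of scale invariance the paper emphasizes.
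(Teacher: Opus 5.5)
Your proof is correct and follows essentially the same route as the paper: (i) and (ii) by linearity of the integral, and (iv) by the substitution $z=ry$, which produces the factor $r^{-n}\cdot r^{n+2s}=r^{2s}$ and the rescaled parameter $\lambda/r$. The only difference is that you work in the absolutely convergent second-difference form of \eqref{eq:1.2} and check its convergence, which makes (iii) immediate and justifies the manipulations; the paper instead performs the substitutions $z=y+x_0$ and $\zeta=ry$ directly inside the principal-value integral.
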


\begin{proof}
Properties \textit{(i)} and \textit{(ii)} follow directly from the linearity of the integral
in \eqref{eq:1.1}. 

For \textit{(iii)}, let $\eta_{x_0}(x)=\eta(x+x_0)$ and perform the change of variables
$z=y+x_0$ in \eqref{eq:1.1}; since the kernel
${e^{-\lambda|x-y|}}/({|x-y|^{n+2s}})$ depends only on $x-y$, one obtains
\begin{align*}
  (-\Delta)^s_\lambda \eta_{x_0}(x)
  &= C_{n,s}\,\mathrm{P.V.}\int_{\mathbb R^n}
    \frac{\eta(x+x_0)-\eta(z)}{|(x+x_0)-z|^{n+2s}}
    e^{-\lambda|(x+x_0)-z|}\,dz\\
  &= ((-\Delta)^s_\lambda \eta) (x+x_0).
\end{align*}

For \textit{(iv)}, let $\eta_r(x)=\eta(rx)$ and set $\zeta = ry$ in
\eqref{eq:1.1}. Then $y=\zeta/r$, $dy = r^{-n}\,d\zeta$ and
\[
  |x-y|
  = \left|x-\frac{\zeta}{r}\right|
  = \frac1r |rx-\zeta|.
\]
Hence
\[
\begin{aligned}
  (-\Delta)^s_\lambda \eta_r(x)
  &= C_{n,s}\,\mathrm{P.V.}\int_{\mathbb R^n}
     \frac{\eta(rx)-\eta(ry)}{|x-y|^{n+2s}}e^{-\lambda|x-y|}\,dy \\
  &= C_{n,s}\,\mathrm{P.V.}\int_{\mathbb R^n}
     \frac{\eta(rx)-\eta(\zeta)}{|x-\zeta/r|^{n+2s}}
     e^{-\lambda|x-\zeta/r|}\,r^{-n}\,d\zeta \\
  &= C_{n,s}\,r^{2s}\,\mathrm{P.V.}\int_{\mathbb R^n}
     \frac{\eta(rx)-\eta(\zeta)}{|rx-\zeta|^{n+2s}}
     e^{-(\lambda/r)|rx-\zeta|}\,d\zeta \\
  &= r^{2s}\,((-\Delta)^s_{\lambda/r}\eta) (rx),
\end{aligned}
\]
This completes the proof.
\end{proof}

\begin{proposition}
Let $s\in(0,1)$ and $\lambda>0$. For any orthogonal matrix $\mathbf M\in O(n)$ and any
$\eta\in C_c^\infty(\mathbb R^n)$, define
$\eta_{\mathbf M}(x):=\eta(\mathbf M x)$, $x\in\mathbb R^n.$
Then
\[
  (-\Delta)^s_\lambda \eta_{\mathbf M}(x)
=
  ((-\Delta)^s_\lambda \eta)(\mathbf M x),\qquad x\in\mathbb R^n.
\]
\end{proposition}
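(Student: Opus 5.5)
The plan is a change of variables in the defining principal-value integral, as in the proof of property \textit{(iii)} above, using only two facts about an orthogonal matrix: \(\mathbf M\) preserves Euclidean length, so \(|\mathbf M x-\mathbf M y|=|x-y|\), and \(|\det\mathbf M|=1\). We may work with the principal-value form
\[
(-\Delta)^s_\lambda \eta_{\mathbf M}(x)
=c_{n,s}\lim_{\varepsilon\to0^+}\int_{|x-y|>\varepsilon}
\frac{\eta(\mathbf Mx)-\eta(\mathbf My)}{|x-y|^{n+2s}}e^{-\lambda|x-y|}\,dy .
\]
For \(\eta\in C_c^\infty(\mathbb R^n)\) this limit exists. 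Alternatively, one can use the second-difference form in~\eqref{eq:1.2}, which is absolutely convergent.

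Fix \(x\) and \(\varepsilon>0\), and substitute \(z=\mathbf M y\), so that \(dz=|\det\mathbf M|\,dy=dy\). Since \(|x-y|=|\mathbf M x-\mathbf M y|=|\mathbf Mx-z|\), both the kernel factor \(|x-y|^{-n-2s}e^{-\lambda|x-y|}\) and the truncation region \(\{|x-y|>\varepsilon\}\) map to \(|\mathbf Mx-z|^{-n-2s}e^{-\lambda|\mathbf Mx-z|}\) and \(\{|\mathbf Mx-z|>\varepsilon\}\). The truncated integral therefore equals
\[
\int_{|\mathbf Mx-z|>\varepsilon}\frac{\eta(\mathbf Mx)-\eta(z)}{|\mathbf Mx-z|^{n+2s}}e^{-\lambda|\mathbf Mx-z|}\,dz
\]
for every \(\varepsilon\). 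Letting \(\varepsilon\to0^+\) and multiplying by \(c_{n,s}\) gives \(((-\Delta)^s_\lambda\eta)(\mathbf Mx)\).

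There is no real obstacle here. The only point needing care is that the principal-value truncation balls must be mapped onto each other, and this holds because \(\mathbf M\) is an isometry fixing the origin. This is the step that uses the radial symmetry of the tempered kernel, which, unlike homogeneity, exponential tempering does not destroy. If one prefers to avoid principal values entirely, the same substitution \(z=\mathbf My\) applied to the second-difference form gives the identity directly, since \(\eta(\mathbf Mx\pm\mathbf My)=\eta_{\mathbf M}(x\pm y)\).
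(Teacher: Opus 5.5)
Your proof is correct and is the paper's argument: substitute $z=\mathbf M y$, use $dz=dy$ and $|x-y|=|\mathbf Mx-z|$, and identify the result as $((-\Delta)^s_\lambda\eta)(\mathbf Mx)$. The only addition is that you state explicitly that the principal-value truncation regions $\{|x-y|>\varepsilon\}$ and $\{|\mathbf Mx-z|>\varepsilon\}$ correspond under $\mathbf M$, a point the paper leaves implicit.
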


\begin{proof}
By definition,
\[
(-\Delta)^s_\lambda \eta_{\mathbf M}(x)
=
c_{n,s}\,\mathrm{P.V.}\int_{\mathbb R^n}
\frac{\eta(\mathbf M x)-\eta(\mathbf M y)}{|x-y|^{n+2s}}e^{-\lambda|x-y|}\,dy.
\]
Let $z=\mathbf M y$. Since $\mathbf M$ is orthogonal, we have $dy=dz$ and
\[
|x-y|=|\mathbf M x-\mathbf M y|=|\mathbf M x-z|.
\]
Hence
\[
  \begin{aligned}
      (-\Delta)^s_\lambda \eta_{\mathbf M}(x)
    &=
      c_{n,s}\,\mathrm{P.V.}\int_{\mathbb R^n}
      \frac{\eta(\mathbf M x)-\eta(z)}{|\mathbf M x-z|^{n+2s}}e^{-\lambda|\mathbf M x-z|}\,dz \\
    &=
      ((-\Delta)^s_\lambda \eta)(\mathbf M x).
    \end{aligned}
\]
  This proves the proposition.
\end{proof}

\begin{proposition}
Let $s\in(0, 1)$ and $\lambda>0$, and let $\eta \in C_c^\infty(\mathbb R^n)$.

\begin{enumerate}
\item If $x_0\in\mathbb R^n$ is a global maximum point of $\eta$, then
\[
(-\Delta)^s_\lambda \eta(x_0)\ge 0.
\]

\item If $x_0\in\mathbb R^n$ is a global minimum point of $\eta$, then
\[
(-\Delta)^s_\lambda \eta(x_0)\le 0.
\]
\end{enumerate}
\end{proposition}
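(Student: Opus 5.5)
The plan is to evaluate the defining integral directly at the extremal point and read off the sign of the integrand. For $\eta\in C_c^\infty(\mathbb R^n)$ the principal value in the definition is best handled through the symmetric second-difference form in \eqref{eq:1.2}:
\[
(-\Delta)^s_\lambda \eta(x_0)
=\frac{c_{n,s}}{2}\int_{\mathbb R^n}
\frac{2\eta(x_0)-\eta(x_0+y)-\eta(x_0-y)}{|y|^{n+2s}}\,e^{-\lambda|y|}\,dy .
\]
This integral converges absolutely. Near the origin, Taylor's theorem gives
\[
|2\eta(x_0)-\eta(x_0+y)-\eta(x_0-y)|\le \|D^2\eta\|_{L^\infty}|y|^2 ,
\]
and $|y|^{2-n-2s}$ is integrable on $B_1$. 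Away from the origin, the numerator is bounded by $4\|\eta\|_{L^\infty}$, and $|y|^{-n-2s}$ is integrable on $\mathbb R^n\setminus B_1$. The factor $e^{-\lambda|y|}\le 1$ only helps. I will also note briefly that the principal value coincides with this symmetric form: on $\mathbb R^n\setminus B_\varepsilon$, substituting $y\mapsto -y$ shows that the P.V. integral over $\{|x_0-y|>\varepsilon\}$ equals half the symmetrized integral, because the kernel is even. One then lets $\varepsilon\to0$ by dominated convergence.

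For part 1, suppose $x_0$ is a global maximum. Then $\eta(x_0)-\eta(x_0\pm y)\ge0$ for every $y$, so the numerator $2\eta(x_0)-\eta(x_0+y)-\eta(x_0-y)$ is nonnegative. The kernel $c_{n,s}|y|^{-n-2s}e^{-\lambda|y|}$ is strictly positive, because $c_{n,s}>0$ for $s\in(0,1)$ (the Gamma factors are positive). The integrand is therefore nonnegative pointwise, and $(-\Delta)^s_\lambda\eta(x_0)\ge0$. Alternatively, one can avoid symmetrization altogether. On each truncated region $\{|x_0-y|>\varepsilon\}$ the integrand $\frac{\eta(x_0)-\eta(y)}{|x_0-y|^{n+2s}}e^{-\lambda|x_0-y|}$ is nonnegative, so every truncated integral is nonnegative, and hence so is their limit.

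For part 2, I will apply part 1 to $-\eta$, which has a global maximum at $x_0$. Linearity (property (i) of the first proposition) gives $(-\Delta)^s_\lambda(-\eta)=-(-\Delta)^s_\lambda\eta$, and the sign flips as required.

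The only step needing care is the justification that the principal value exists and equals the absolutely convergent symmetric integral. For smooth compactly supported $\eta$ this is routine, so I expect no real obstacle. The truncation argument sidesteps even this point, since it needs only that the limit defining the P.V. exists, which is part of the definition being well posed.
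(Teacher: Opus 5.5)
Your proposal is correct and follows the paper's argument: at a global maximum the numerator $\eta(x_0)-\eta(y)$ is nonnegative and the kernel is positive, so the principal value integral is nonnegative, and the minimum case is analogous. Your symmetrization and truncation remarks supply the convergence justification that the paper leaves implicit, and obtaining part 2 by applying part 1 to $-\eta$ is an equivalent way of doing what the paper calls ``identical.''
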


\begin{proof}
Suppose first that $x_0$ is a global maximum point of $\eta$. Then
  \[
    \eta(x_0)-\eta(y)\ge 0 \qquad \text{for all } y\in\mathbb R^n.
  \]
Since the kernel
 ${e^{-\lambda|x_0-y|}}/({|x_0-y|^{n+2s}})$
 is nonnegative, the integrand in the definition of $(-\Delta)^s_\lambda \eta(x_0)$ is nonnegative, and therefore
  \[
    (-\Delta)^s_\lambda \eta(x_0)
  =
    c_{n,s}\,\mathrm{P.V.}\int_{\mathbb R^n}
    \frac{\eta(x_0)-\eta(y)}{|x_0-y|^{n+2s}}e^{-\lambda|x_0-y|}\,dy
  \ge 
    0.
  \]
The proof for a global minimum point is identical.
\end{proof}

\begin{remark}
Unlike the classical fractional Laplacian, which is homogeneous under dilations, 
the tempered fractional Laplacian is not homogeneous in the same sense
Its scaling law necessarily involves a simultaneous
rescaling of the tempering parameter, namely $\lambda \mapsto \lambda/r.$
More precisely,
$(-\Delta)^s_\lambda \eta_r(x)=r^{2s}((-\Delta)^s_{\lambda/r}\eta)(rx)$ 
where $\eta_r(x):=\eta(rx)$.
Thus the scaling changes not only the prefactor $r^{2s}$, but also the tempering parameter $\lambda$, which becomes $\lambda/r$.
\end{remark}

% \section{Some regularity of \texorpdfstring{$\mathcal{Q}^s_\lambda u$}{Q-lambda-s u}}
\section{Some properties of the correction operator
\texorpdfstring{$\mathcal Q_\lambda^s$}{Q}}\label{sec:correction}

This section studies the correction operator
\(\mathcal Q_\lambda^s\), which relates the tempered operator to the
classical fractional Laplacian. We define it at the weak level and
establish the pointwise, \(L^\infty\), and H\"older estimates used in
Sections~\ref{sec:global-regularity} and~\ref{sec:schauder}.

\begin{definition}
Let \(n\ge1\), \(0<s<1\), and \(\lambda>0\). For
\(u\in H^s(\mathbb R^n)\), define the \emph{correction operator}
\(\mathcal Q^s_\lambda u\) distributionally by
\begin{equation}\label{eq:Qlambda-weak}
\begin{aligned}
\bigl\langle \mathcal Q^s_\lambda u,\varphi\bigr\rangle
:={}&\frac{c_{n,s}}{2}
\iint_{\mathbb R^n\times\mathbb R^n}
\frac{(u(x)-u(y))(\varphi(x)-\varphi(y))}
{|x-y|^{n+2s}}\\
&\hspace{3.2cm}\times
\bigl(1-e^{-\lambda|x-y|}\bigr)\,dx\,dy,
\qquad \varphi\in H^s(\mathbb R^n).
\end{aligned}
\end{equation}
This pairing is well defined because
\(0\le1-e^{-\lambda|x-y|}\le1\), and the Cauchy--Schwarz inequality
gives
\[
\bigl|\langle \mathcal Q^s_\lambda u,\varphi\rangle\bigr|
\le C [u]_{H^s(\mathbb R^n)}[\varphi]_{H^s(\mathbb R^n)}.
\]

The correction operator also has an absolutely convergent pointwise
representation if either \(0<s<\tfrac12\) and
\(u\in L^\infty(\mathbb R^n)\), or \(\tfrac12\le s<1\) and
\(u\in L^\infty(\mathbb R^n)\cap C^\gamma(\mathbb R^n)\) for some
\(2s-1<\gamma\le1\):
\begin{align}\label{eq:Qlambda}
\mathcal{Q}^s_\lambda u(x)
:={}&c_{n,s}\int_{\mathbb{R}^n}
\frac{u(x)-u(y)}{|x-y|^{n+2s}}
\Bigl(1-e^{-\lambda|x-y|}\Bigr)\,dy\\
={}&c_{n,s}\int_{\mathbb{R}^n}
\frac{u(x)-u(x+z)}{|z|^{n+2s}}
\bigl(1-e^{-\lambda|z|}\bigr)\,dz .
\end{align}
For \(s<\tfrac12\), the kernel itself is locally integrable. For
\(s\ge\tfrac12\), the H\"older estimate for the increment of \(u\)
bounds the integrand near \(z=0\) by a constant times
\(|z|^{-n-2s+1+\gamma}\), which is integrable precisely when
\(\gamma>2s-1\). At infinity, boundedness of \(u\) gives integrability.

Whenever the operators admit pointwise representatives, one has
\begin{equation}\label{Q=g-f}
\mathcal{Q}^s_\lambda u
=(-\Delta)^s u-(-\Delta)^s_\lambda u.
\end{equation}
For a weak solution \(u\in\widetilde H^s(\Omega)\) of
\eqref{eq:1.1}, the corresponding identity is understood in
\(\mathcal D'(\Omega)\):
\begin{equation}\label{eqQ}
(-\Delta)^s u=f+\mathcal Q^s_\lambda u.
\end{equation}
The correction operator \(\mathcal Q^s_\lambda\) is of lower order: its
kernel is one power less singular near the diagonal, although the kernel
itself is locally integrable only when \(s<\tfrac12\).

When \(\lambda=0\), the factor \(1-e^{-\lambda|x-y|}\) vanishes, so
\(\mathcal Q^s_0u\equiv0\) and
\[
(-\Delta)^s_0u=(-\Delta)^su.
\]
\end{definition}

We next establish the required \(L^\infty\) and H\"older bounds for
\(\mathcal Q_\lambda^s\).

\begin{lemma}\label{lemmaQL}
    Let $n\ge 1$, $0<s<1$, and $\lambda>0$. Define
    \[
    \mathcal{Q}^s_\lambda  u(x)
    :=
    c_{n,s}\int_{\mathbb{R}^n}
    \frac{u(x)-u(x+z)}{|z|^{n+2s}}
    \left(1-e^{-\lambda |z|}\right)\,dz .
    \]
    Then the following assertions hold.

    \begin{itemize}
        \item[(i)] {If \(0<s<\frac12\) and
        \(u\in L^\infty(\mathbb R^n)\), then
        \(\mathcal Q_\lambda^s u\in L^\infty(\mathbb R^n)\) and}
        \begin{equation}\label{QL<UL}
          \|\mathcal{Q}^s_\lambda  u\|_{L^\infty(\mathbb{R}^n)}
           \le
          C\|u\|_{L^\infty(\mathbb{R}^n)}.
        \end{equation}

        \item[(ii)] {If
        \(\frac12\le s<1\), \(0<\gamma<1\),
        \(\gamma>2s-1\), and
        \(u\in L^\infty(\mathbb R^n)\cap C^\gamma(\mathbb R^n)\), then
        \(\mathcal Q_\lambda^s u\in L^\infty(\mathbb R^n)\) and, for
        every \(\varepsilon>0\),}
        \[
        \|\mathcal{Q}^s_\lambda  u\|_{L^\infty(\mathbb{R}^n)}
        \le
        C\left(
        \varepsilon^{\gamma+1-2s}
        [u]_{C^\gamma(\mathbb{R}^n)}
        +
        \varepsilon^{-2s}
        \|u\|_{L^\infty(\mathbb{R}^n)}
        \right).
        \]

        \item[(iii)] {In particular, if
        \(\frac12\le s<1\) and \(u\in C^s(\mathbb R^n)\), then, for
        every \(\varepsilon>0\),}
          \begin{equation}
                \|\mathcal{Q}^s_\lambda u\|_{L^\infty(\mathbb{R}^n)}
          \le
            C\left(
              \varepsilon^{1-s}[u]_{C^s(\mathbb{R}^n)}
          +
              \varepsilon^{-2s}\|u\|_{L^\infty(\mathbb{R}^n)}
          \right).
          \end{equation}
    \end{itemize}
    Here $C>0$ is a constant depending only on $n,s,\lambda$ and, in case $(ii)$, also on $\gamma$.
\end{lemma}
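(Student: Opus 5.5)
We split the integral defining \(\mathcal Q^s_\lambda u(x)\) into the regions \(|z|<\varepsilon\) and \(|z|\ge\varepsilon\). Near the origin we use the elementary bound \(0\le 1-e^{-\lambda|z|}\le \lambda|z|\). At infinity we use \(0\le1-e^{-\lambda|z|}\le1\). The whole point is that the tempering factor gains one power of \(|z|\) at the origin, which makes the correction kernel \(|z|^{-n-2s}(1-e^{-\lambda|z|})\) behave like \(\lambda|z|^{-n-2s+1}\) there.

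\textbf{Case (i), \(0<s<\tfrac12\).} No splitting parameter is needed. Bounding \(|u(x)-u(x+z)|\le 2\|u\|_{L^\infty(\mathbb R^n)}\) gives
\[
|\mathcal Q^s_\lambda u(x)|\le 2c_{n,s}\|u\|_{L^\infty}\Bigl(\lambda\int_{|z|<1}|z|^{-n-2s+1}\,dz+\int_{|z|\ge1}|z|^{-n-2s}\,dz\Bigr).
\]
Both integrals are finite: the first because \(1-2s>0\), the second because \(s>0\). This gives \eqref{QL<UL} with \(C=C(n,s,\lambda)\), and shows the integral is absolutely convergent, uniformly in \(x\).

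\textbf{Case (ii), \(\tfrac12\le s<1\).} Fix \(\varepsilon>0\). On \(|z|<\varepsilon\) we use the H\"older bound \(|u(x)-u(x+z)|\le[u]_{C^\gamma(\mathbb R^n)}|z|^\gamma\). The integrand is then at most \(\lambda[u]_{C^\gamma}|z|^{-n-2s+1+\gamma}\), and
\[
\int_{|z|<\varepsilon}|z|^{-n-2s+1+\gamma}\,dz=\frac{|\mathbb S^{n-1}|}{\gamma+1-2s}\,\varepsilon^{\gamma+1-2s},
\]
which is finite precisely because \(\gamma>2s-1\). On \(|z|\ge\varepsilon\) we drop the tempering factor and use the bound by \(2\|u\|_{L^\infty}\), obtaining \(2\|u\|_{L^\infty}\int_{|z|\ge\varepsilon}|z|^{-n-2s}\,dz=C\varepsilon^{-2s}\|u\|_{L^\infty}\). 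Adding the two pieces yields the claimed inequality, with \(C\) depending on \(n,s,\lambda,\gamma\). The constant blows up as \(\gamma\downarrow 2s-1\), which is why \(\gamma\) enters the dependence.

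\textbf{Case (iii).} Take \(\gamma=s\) in (ii). This is admissible since \(s>2s-1\) is equivalent to \(s<1\), and \(0<s<1\). The exponent becomes \(\gamma+1-2s=1-s\), and \(C^s(\mathbb R^n)\) functions are bounded with \(\|u\|_{L^\infty}\le\|u\|_{C^s}\), so the hypotheses of (ii) hold. The boundary case \(s=\tfrac12\), \(\gamma=s\) is still covered, since \(\gamma=\tfrac12>0=2s-1\).

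The only delicate point is the near-origin integrability in (ii). Everything hinges on combining the gain \(1-e^{-\lambda|z|}\le\lambda|z|\) with the H\"older increment, so that the exponent \(-n-2s+1+\gamma\) exceeds \(-n\). Everything else is routine polar-coordinate integration.
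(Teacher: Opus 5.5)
Your proposal is correct and follows essentially the same route as the paper. The paper likewise proves (i) by showing the correction kernel is in $L^1(\mathbb R^n)$ via a split at $|z|=1$. It proves (ii) by splitting at $|z|=\varepsilon$, combining $1-e^{-\lambda|z|}\le\lambda|z|$ with the H\"older increment near the origin and the $L^\infty$ bound away from it, and it obtains (iii) by substituting $\gamma=s$.
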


\begin{proof}
    We first consider the case $0<s<\frac12$. Set
    \[
    K^s_\lambda (z)
    :=
    c_{n,s}
    \frac{1-e^{-\lambda |z|}}{|z|^{n+2s}}.
    \]
    We prove that $K^s_\lambda \in L^1(\mathbb{R}^n)$. Split
    \[
      \int_{\mathbb{R}^n}K^s_\lambda (z)\,dz
    =
      \int_{|z|\le 1}K^s_\lambda (z)\,dz + \int_{|z|>1}K^s_\lambda (z)\,dz .
    \]
   
    For $|z|\le 1$, using $1-e^{-\lambda |z|}\le \lambda |z|$, we have
    \[
    K^s_\lambda (z)
    \le
    c_{n,s}\lambda |z|^{-n-2s+1}.
    \]
    Hence, by polar coordinates,
    \[
    \int_{|z|\le 1}K^s_\lambda (z)\,dz
    \le
    c_{n,s}\lambda |\mathbb{S}^{n-1}|
    \int_0^1 r^{-2s}\,dr
    =
    c_{n,s}\lambda |\mathbb{S}^{n-1}|
    \frac{1}{1-2s}.
    \]
    This is finite because $s<\frac12$.

    For $|z|>1$, since $1-e^{-\lambda |z|}\le 1$, we get
    \[
    K^s_\lambda (z)
    \le
    c_{n,s}|z|^{-n-2s}.
    \]
    Thus
    \[
    \int_{|z|>1}K^s_\lambda (z)\,dz
    \le
    c_{n,s}|\mathbb{S}^{n-1}|
    \int_1^\infty r^{-1-2s}\,dr
    =
    c_{n,s}|\mathbb{S}^{n-1}|\frac{1}{2s}.
    \]
    Therefore
    \[
    \|K^s_\lambda \|_{L^1(\mathbb{R}^n)}
    \le
    C_{n,s,\lambda}<\infty .
    \]
    Consequently,
    \[
    \begin{aligned}
    |\mathcal{Q}^s_\lambda  u(x)|
    &\le
    \int_{\mathbb{R}^n}
    |u(x)-u(x+z)|K^s_\lambda (z)\,dz        \\
    &\le
    2\|u\|_{L^\infty(\mathbb{R}^n)}
    \|K^s_\lambda \|_{L^1(\mathbb{R}^n)} .
    \end{aligned}
    \]
    Taking the supremum over $x\in\mathbb{R}^n$ gives
    \[
    \|\mathcal{Q}^s_\lambda  u\|_{L^\infty(\mathbb{R}^n)}
    \le
    C\|u\|_{L^\infty(\mathbb{R}^n)}.
    \]

    Next, assume that $\frac12\le s<1$, $0<\gamma<1$, $\gamma>2s-1$, and
    $u\in C^\gamma(\mathbb{R}^n)$. Fix $\varepsilon>0$. We split
    \begin{align*}
      \mathcal{Q}^s_\lambda  u(x)
    &=
     c_{n,s}\int_{|z|\le \varepsilon}
    \frac{u(x)-u(x+z)}{|z|^{n+2s}}
    \left(1-e^{-\lambda |z|}\right)\,dz\\
    &+
      c_{n,s}\int_{|z|>\varepsilon}
      \frac{u(x)-u(x+z)}{|z|^{n+2s}}
      \left(1-e^{-\lambda |z|}\right)\,dz .
    \end{align*}
    For the near-field part, using
    $
    |u(x)-u(x+z)|
    \le
    [u]_{C^\gamma(\mathbb{R}^n)}|z|^\gamma
    $
    and
    $
    1-e^{-\lambda |z|}
    \le
    \lambda |z|,
    $
    we obtain
    \begin{equation*}
      \int_{|z|\le \varepsilon}\frac{|u(x)-u(x+z)|}{|z|^{n+2s}}
       \left(1-e^{-\lambda |z|}\right)\,dz       
    \le
      \frac{\lambda |\mathbb{S}^{n-1}|}{\gamma+1-2s}\varepsilon^{\gamma+1-2s}[u]_{C^\gamma(\mathbb{R}^n)} .
    \end{equation*}
    For the far-field part, using
    $|u(x)-u(x+z)| \le 2\|u\|_{L^\infty(\mathbb{R}^n)}$
    and
    $1-e^{-\lambda |z|}\le1,$
    we get
    
    \begin{equation*}
      \int_{|z|>\varepsilon}
      \frac{|u(x)-u(x+z)|}{|z|^{n+2s}}
      \left(1-e^{-\lambda |z|}\right)\,dz      
    \le
      \frac{|\mathbb{S}^{n-1}|}{s}
      \varepsilon^{-2s}
      \|u\|_{L^\infty(\mathbb{R}^n)} .
    \end{equation*}
    
    % Combining the two estimates and taking the supremum over
    % $x\in\mathbb{R}^n$, we obtain
    % \[
    % \|\mathcal{Q}^s_\lambda  u\|_{L^\infty(\mathbb{R}^n)}
    % \le
    % C\left(
    % \varepsilon^{\gamma+1-2s}
    % [u]_{C^\gamma(\mathbb{R}^n)}
    % +
    % \varepsilon^{-2s}
    % \|u\|_{L^\infty(\mathbb{R}^n)}
    % \right).
    % \]
    % The proof is complete.
    Combining the two estimates and taking the supremum over
    $x\in\mathbb{R}^n$, we obtain $(ii)$.

    Finally, since
    $s>2s-1$ for every $s<1$,
    the choice $\gamma=s$ is admissible in $(ii)$. Substituting
    $\gamma=s$ gives
    $\varepsilon^{\gamma+1-2s}=\varepsilon^{1-s},$
    and therefore
    \[
    \|\mathcal{Q}^s_\lambda u\|_{L^\infty(\mathbb{R}^n)}
    \le
    C\left(
    \varepsilon^{1-s}[u]_{C^s(\mathbb{R}^n)}
    +
    \varepsilon^{-2s}\|u\|_{L^\infty(\mathbb{R}^n)}
    \right).
    \]
    This proves $(iii)$ and completes the proof.
\end{proof}

\begin{lemma}\label{QCA}
    Let $n\ge 1$, $0<s<1$, $\lambda > 0 $, and let
    \[
      \mathcal{Q}^s_\lambda  u(x)
    :=
      c_{n,s}\int_{\mathbb{R}^n}\frac{u(x)-u(x+z)}{|z|^{n+2s}}\left(1-e^{-\lambda |z|}\right)\,dz .
    \]
    Then the following assertions hold.

    \begin{itemize}
        \item[(i)] If $0<s<\frac12$, $0 < \alpha <1$, and
        $u\in C^\alpha(\mathbb{R}^n)$, then
        $\mathcal{Q}^s_\lambda  u\in C^\alpha(\mathbb{R}^n)$
        and
        \[
        \|\mathcal{Q}^s_\lambda  u\|_{C^\alpha(\mathbb{R}^n)}
        \le
        C\|u\|_{C^\alpha(\mathbb{R}^n)}.
        \]

        \item[(ii)] If $s=\frac12$, $0 < \sigma < \alpha < 1$, and
        $u\in C^\alpha(\mathbb{R}^n)$, then
        $\mathcal{Q}^{1/2}_\lambda u\in C^\sigma(\mathbb{R}^n)$
        and
        \[
        \|\mathcal{Q}^{1/2}_\lambda u\|_{C^\sigma(\mathbb{R}^n)}
        \le
        C\|u\|_{C^\alpha(\mathbb{R}^n)}.
        \]

        \item[(iii)] If $\frac12<s<1$, $0 < \alpha <1$, $\alpha -2s+1 > 0$, and
        $u\in C^\alpha(\mathbb{R}^n)$, then
        $\mathcal{Q}^s_\lambda  u\in C^{\alpha+1-2s}(\mathbb{R}^n)$
        and
        \[
        \|\mathcal{Q}^s_\lambda  u\|_{C^{\alpha-2s+1}(\mathbb{R}^n)}
        \le
        C\|u\|_{C^\alpha(\mathbb{R}^n)}.
        \]
        % \item[(iv)] In particular, if $\frac12\le s<1$,
        % % and $u\in C^s(\mathbb{R}^n)$, 
        %   and
        % $u\in C^s(\mathbb{R}^n)$, then
        % $\mathcal{Q}^s_\lambda  u\in C^{1-s}(\mathbb{R}^n)$
        % and
        % \[
        % \|\mathcal{Q}^s_\lambda  u\|_{C^{1-s}(\mathbb{R}^n)}
        % \le
        % C\|u\|_{C^s(\mathbb{R}^n)}.
        % \]
    \end{itemize}
    Here $C>0$ depends only on the indicated parameters.
\end{lemma}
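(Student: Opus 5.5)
The plan is to handle the $L^\infty$ part and the H\"older seminorm separately. Write $K(z):=c_{n,s}|z|^{-n-2s}(1-e^{-\lambda|z|})$, so that $0\le K(z)\le C\min\{\lambda|z|^{1-n-2s},|z|^{-n-2s}\}$. The $L^\infty$ bound in each case comes from Lemma~\ref{lemmaQL}. In case (i) we use \eqref{QL<UL}. In cases (ii) and (iii) we use Lemma~\ref{lemmaQL}(ii) with $\gamma=\alpha$ and $\varepsilon=1$; this is admissible because $\alpha>2s-1$, which holds trivially when $s=\tfrac12$. It gives $\|\mathcal Q^s_\lambda u\|_{L^\infty}\le C\|u\|_{C^\alpha}$. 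It then remains to bound $|\mathcal Q^s_\lambda u(x_1)-\mathcal Q^s_\lambda u(x_2)|$ for $h:=|x_1-x_2|$. We may assume $h\le1$, since for $h>1$ the $L^\infty$ bound already suffices. For this we set $w(z):=u(x_1)-u(x_1+z)-u(x_2)+u(x_2+z)$, so that
\[
\mathcal Q^s_\lambda u(x_1)-\mathcal Q^s_\lambda u(x_2)=\int_{\mathbb R^n} w(z)K(z)\,dz.
\]

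In case (i), $K\in L^1$ as shown in the proof of Lemma~\ref{lemmaQL}(i). The bound $|w(z)|\le 2[u]_{C^\alpha}h^\alpha$ then gives the estimate directly, with no splitting.

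In cases (ii) and (iii) we split the integral at radius $h$. Near the origin, $|z|\le h$, we use $|w(z)|\le 2[u]_{C^\alpha}|z|^\alpha$ together with $K\le C\lambda|z|^{1-n-2s}$. This gives $C\int_0^h r^{\alpha-2s}\,dr=C h^{\alpha+1-2s}$, which is integrable since $\alpha+1-2s>0$. On the intermediate range $h<|z|\le1$ we use $|w(z)|\le 2[u]_{C^\alpha}h^\alpha$ and the same bound on $K$, which gives $C h^\alpha\int_h^1 r^{-2s}\,dr$. For $s>\tfrac12$ this is at most $Ch^\alpha h^{1-2s}$, which is the claimed exponent in (iii). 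For $s=\tfrac12$ it equals $Ch^\alpha\log(1/h)\le C_\sigma h^{\sigma}$ for any $\sigma<\alpha$, and this logarithmic loss is precisely why (ii) only yields $C^\sigma$. On the far range $|z|>1$ the kernel is integrable, and $|w|\le 2[u]_{C^\alpha}h^\alpha$ gives $Ch^\alpha$. This is dominated by $h^{\alpha+1-2s}$, respectively by $h^\sigma$, because $h\le1$.

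The main point to check is the intermediate range, where the exponents must balance. In case (iii) the choice of splitting at $|z|=h$ is what produces exactly $h^{\alpha+1-2s}$. In case (ii) the logarithm has to be absorbed by a small power loss, which fixes the constant's dependence on $\alpha-\sigma$. Combining the $L^\infty$ estimate with the seminorm estimate proves all three assertions.
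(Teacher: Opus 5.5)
Your proposal is correct and follows the paper's proof essentially step for step. The ingredients match: the $L^\infty$ bound from Lemma~\ref{lemmaQL} (with $\gamma=\alpha$ when $s\ge\frac12$), the two-sided bound $|w(z)|\le 2[u]_{C^\alpha}\min\{h^\alpha,|z|^\alpha\}$, direct integrability of the kernel when $s<\frac12$, and the three-region split at $|z|=h$ and $|z|=1$, which yields $h^{\alpha+1-2s}$ for $s>\frac12$ and $h^\alpha(1+\log(1/h))\le C_{\alpha,\sigma}h^\sigma$ for $s=\frac12$.
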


\begin{proof}
    We write
    \[K^s_\lambda (z):=
    c_{n,s}
    \frac{1-e^{-\lambda |z|}}{|z|^{n+2s}}.
    \]

    For $h\in\mathbb{R}^n$, define
    $\delta_h u(x):=u(x+h)-u(x).$
    Then
    \[
    \mathcal{Q}^s_\lambda  u(x+h)-\mathcal{Q}^s_\lambda  u(x)
    =
    \int_{\mathbb{R}^n}
    \left(
    \delta_h u(x)-\delta_h u(x+z)
    \right)
    K^s_\lambda (z)\,dz .
    \]
    If $u\in C^\alpha(\mathbb{R}^n)$, then
    \[
    |\delta_h u(x)-\delta_h u(x+z)|
    \le
    2[u]_{C^\alpha(\mathbb{R}^n)}|h|^\alpha
    \]
    and also
    \[
    |\delta_h u(x)-\delta_h u(x+z)|
    \le
    2[u]_{C^\alpha(\mathbb{R}^n)}|z|^\alpha .
    \]
    Hence
    \[
    |\delta_h u(x)-\delta_h u(x+z)|
    \le
    2[u]_{C^\alpha(\mathbb{R}^n)}
    \min\{|h|^\alpha,|z|^\alpha\}.
    \]

    First, assume that $0<s<\frac12$. By the proof of
    Lemma~\ref{lemmaQL}, we have $K^s_\lambda \in L^1(\mathbb{R}^n).$
    Therefore
    \[
    \begin{aligned}
    |\mathcal{Q}^s_\lambda  u(x+h)-\mathcal{Q}^s_\lambda  u(x)|
    &\le
    2[u]_{C^\alpha(\mathbb{R}^n)}
    |h|^\alpha
    \int_{\mathbb{R}^n}K^s_\lambda (z)\,dz  \\
    &\le
    C[u]_{C^\alpha(\mathbb{R}^n)}|h|^\alpha .
    \end{aligned}
    \]
    Thus
    \[
    [\mathcal{Q}^s_\lambda  u]_{C^\alpha(\mathbb{R}^n)}
    \le
    C[u]_{C^\alpha(\mathbb{R}^n)}.
    \]
    Together with Lemma~\ref{lemmaQL}, this gives
    \[
    \|\mathcal{Q}^s_\lambda  u\|_{C^\alpha(\mathbb{R}^n)}
    \le
    C\|u\|_{C^\alpha(\mathbb{R}^n)}.
    \]

    Next, assume that $s=\frac12$. Let $\alpha > \sigma > 0$ and set
   $ \rho:=|h|.$
    It is enough to consider $0<\rho<1$, since the case $\rho\ge 1$ is controlled by the $L^\infty$ estimate.

    For $|z|\le 1$, we have
    \[
    K_\lambda^{1/2}(z)
    \le
    C|z|^{-n}.
    \]

    For $|z|>1$, we have
    \[
    K_\lambda^{1/2}(z)
    \le
    C|z|^{-n-1}.
    \]
    Using the preceding minimum estimate, we split the integral into three parts:
    \[
    |z|\le \rho < 1,\qquad
    \rho<|z|\le 1,\qquad
    |z|>1.
    \]
    Then
    \[
    \begin{aligned}
    |\mathcal{Q}^{1/2}_\lambda u(x+h)-\mathcal{Q}^{1/2}_\lambda u(x)|
    &\le
    C[u]_{C^\alpha}
    \left(
    \int_{|z|\le \rho}|z|^\alpha |z|^{-n}\,dz \right. \\
    &\qquad\left.
    +
    \rho^\alpha
    \int_{\rho<|z|\le 1}|z|^{-n}\,dz
    +
    \rho^\alpha
    \int_{|z|>1}|z|^{-n-1}\,dz
    \right).
    \end{aligned}
    \]
    By polar coordinates,
    \[
    \int_{|z|\le \rho}|z|^\alpha |z|^{-n}\,dz
    =
    |\mathbb{S}^{n-1}|
    \int_0^\rho r^{\alpha-1}\,dr
    =
    C\rho^\alpha,
    \]
    and
    \[
    \int_{\rho<|z|\le 1}|z|^{-n}\,dz
    =
    |\mathbb{S}^{n-1}|
    \int_\rho^1 r^{-1}\,dr
    =
    C\log\frac1\rho.
    \]
    Moreover,
    \[
    \int_{|z|>1}|z|^{-n-1}\,dz<\infty.
    \]
    Hence
    \[
    |\mathcal{Q}^{1/2}_\lambda u(x+h)-\mathcal{Q}^{1/2}_\lambda u(x)|
    \le
    C[u]_{C^\alpha(\mathbb{R}^n)}
    \rho^\alpha\left(1+\log\frac1\rho\right).
    \]
    Since $0<\sigma<\alpha$, there exists a constant $C_{\alpha,\sigma}>0$ such that
    \[
    \rho^\alpha\left(1+\log\frac1\rho\right)
    \le
    C_{\alpha,\sigma}\rho^\sigma,
    \qquad 0<\rho<1.
    \]
    Therefore
    \[
    [\mathcal{Q}^{1/2}_\lambda u]_{C^\sigma(\mathbb{R}^n)}
    \le
    C[u]_{C^\alpha(\mathbb{R}^n)}.
    \]
    The $L^\infty$ estimate follows from
    Proposition~\ref{lemmaQL} with $\gamma=\alpha$.
    Thus
    \[
    \|\mathcal{Q}^{1/2}_\lambda u\|_{C^\sigma(\mathbb{R}^n)}
    \le
    C\|u\|_{C^\alpha(\mathbb{R}^n)}.
    \]

    Finally, assume that $\frac12<s<1$ and $\alpha > 2s-1 $. Set
   $ m:=2s-1.$
    Then $0<m<1$ and $\alpha-m>0$. Again let $\rho:=|h|$.
    It is enough to consider $0<\rho<1$.

    For $|z|\le 1$, using $1-e^{-\lambda |z|}\le \lambda |z|$, we have
    \[
    K^s_\lambda (z)
    \le
    C|z|^{-n-m}.
    \]

    For $|z|>1$, using $1-e^{-\lambda |z|}\le 1$, we have
    \[
    K^s_\lambda (z)
    \le
    C|z|^{-n-2s}.
    \]
    % \color{red}
    Therefore,
    \[
    \begin{aligned}
    |\mathcal{Q}^s_\lambda  u(x+h)-\mathcal{Q}^s_\lambda  u(x)|
    &\le
    C[u]_{C^\alpha}
    \left(
    \int_{|z|\le \rho}|z|^\alpha |z|^{-n-m}\,dz \right. \\
    &\qquad\left.
    +
    \rho^\alpha
    \int_{\rho<|z|\le 1}|z|^{-n-m}\,dz
    +
    \rho^\alpha
    \int_{|z|>1}|z|^{-n-2s}\,dz
    \right).
    \end{aligned}
    \]
    Using polar coordinates,
    \[
    \int_{|z|\le \rho}|z|^\alpha |z|^{-n-m}\,dz
    =
    |\mathbb{S}^{n-1}|
    \int_0^\rho r^{\alpha-m-1}\,dr
    =
    C\rho^{\alpha-m},
    \]
    because $\alpha>m$. Also,
    \[
    \rho^\alpha
    \int_{\rho<|z|\le 1}|z|^{-n-m}\,dz
    =
    C\rho^\alpha
    \int_\rho^1 r^{-m-1}\,dr
    \le
    C\rho^{\alpha-m}.
    \]
    Finally,
    \[
    \rho^\alpha
    \int_{|z|>1}|z|^{-n-2s}\,dz
    \le
    C\rho^\alpha
    \le
    C\rho^{\alpha-m},
    \]
    since $0<\rho<1$ and $m>0$. Combining the three estimates gives
    \[
    |\mathcal{Q}^s_\lambda  u(x+h)-\mathcal{Q}^s_\lambda  u(x)|
    \le
    C[u]_{C^\alpha(\mathbb{R}^n)}
    |h|^{\alpha+1-2s}.
    \]
    Hence
    \[
    [\mathcal{Q}^s_\lambda  u]_{C^{\alpha+1-2s}(\mathbb{R}^n)}
    \le
    C[u]_{C^\alpha(\mathbb{R}^n)}.
    \]
    The $L^\infty$ estimate follows from
    Proposition~\ref{lemmaQL} with $\gamma=\alpha$.
    Therefore
    \[
    \|\mathcal{Q}^s_\lambda  u\|_{C^{\alpha+1-2s}(\mathbb{R}^n)}
    \le
    C\|u\|_{C^\alpha(\mathbb{R}^n)}.
    \]
    The proof is complete.
\end{proof}

      \begin{lemma}\label{thm:Q-subcritical-holder}
                Let $n\ge 1$, $0<s<\frac12$, $\lambda>0$, and define
                    \[
                      \mathcal{Q}^s_\lambda  u(x)
                    :=
                      c_{n,s}\int_{\mathbb{R}^n}\frac{u(x)-u(x+z)}{|z|^{n+2s}}\left(1-e^{-\lambda |z|}\right)\,dz .
                    \]
                Then for every $\alpha>1$,
                $
                \mathcal Q_\lambda^s:
                C^\alpha(\mathbb R^n)
                \longrightarrow
                C^\alpha(\mathbb R^n)
                $
            is a bounded linear operator. More precisely,
            \[
                     \|\mathcal Q_\lambda^s u\|_{C^\alpha(\mathbb R^n)}
              \le
                  C\|u\|_{C^\alpha(\mathbb R^n)},
            \]
            where $C>0$ depends only on $n,s,\lambda,\alpha$.

            Moreover, if $\alpha=k+\theta$, where $k\in\mathbb N_0$ and
            $0<\theta\le 1$, then for every $\nu$ with $|\nu|\le k$,
            \[
                D^\nu\mathcal Q_\lambda^s u
              =
                \mathcal Q_\lambda^s(D^\nu u)
            \]
            pointwise in $\mathbb R^n$.
      \end{lemma}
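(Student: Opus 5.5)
The approach is to exploit that, for $0<s<\tfrac12$, the kernel $K^s_\lambda(z)=c_{n,s}(1-e^{-\lambda|z|})|z|^{-n-2s}$ lies in $L^1(\mathbb R^n)$, as shown in the proof of Lemma~\ref{lemmaQL}. This lets us write
\[
\mathcal Q^s_\lambda u(x)=\|K^s_\lambda\|_{L^1}\,u(x)-(K^s_\lambda * u)(x),
\]
with $K^s_\lambda$ even, so that $u(x+z)$ and $u(x-z)$ give the same integral. Then $\mathcal Q^s_\lambda$ is the difference of a multiple of the identity and convolution with an $L^1$ function. Both operations commute with translations and are bounded on every translation-invariant space whose norm is built from sup-norms of increments. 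The whole proof will rest on this representation, together with Young's inequality $\|K*g\|_{L^\infty}\le\|K\|_{L^1}\|g\|_{L^\infty}$.

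First, the commutation $D^\nu\mathcal Q^s_\lambda u=\mathcal Q^s_\lambda(D^\nu u)$ for $|\nu|\le k$, proved by induction on $|\nu|$. For a single derivative $\partial_j$, I would write the difference quotient
\[
\frac{\mathcal Q^s_\lambda u(x+te_j)-\mathcal Q^s_\lambda u(x)}{t}=\int_{\mathbb R^n}\Bigl(\frac{u(x+te_j)-u(x)}{t}-\frac{u(x+z+te_j)-u(x+z)}{t}\Bigr)K^s_\lambda(z)\,dz.
\]
Since $u\in C^\alpha$ with $\alpha>1$, $\partial_ju$ is bounded, so the integrand is dominated by $2\|\partial_ju\|_{L^\infty}K^s_\lambda(z)\in L^1$. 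Dominated convergence gives $\partial_j\mathcal Q^s_\lambda u=\mathcal Q^s_\lambda(\partial_j u)$ pointwise. Iterating requires $D^\nu u$ bounded for $|\nu|\le k$, which holds by the definition of $C^{k,\theta}$ with its full norm. Continuity of the resulting derivatives follows from the same domination, or from the Hölder bound below.

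Second, the norm estimate. Lemma~\ref{lemmaQL}(i) applied to $D^\nu u$, which is bounded for $|\nu|\le k$, gives $\|D^\nu\mathcal Q^s_\lambda u\|_{L^\infty}=\|\mathcal Q^s_\lambda D^\nu u\|_{L^\infty}\le C\|D^\nu u\|_{L^\infty}$. For the top seminorm with $|\nu|=k$, I would set $w=D^\nu u\in C^{0,\theta}$. The difference argument from the proof of Lemma~\ref{QCA}(i) gives
\[
|\mathcal Q^s_\lambda w(x+h)-\mathcal Q^s_\lambda w(x)|\le 2[w]_{C^\theta}|h|^\theta\|K^s_\lambda\|_{L^1}.
\]
That argument was stated for $\theta<1$, but it uses only $|\delta_hw(x)-\delta_hw(x+z)|\le2[w]_{C^\theta}|h|^\theta$, so it also works for $\theta=1$ (Lipschitz $w$). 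Summing over $|\nu|\le k$ then yields $\|\mathcal Q^s_\lambda u\|_{C^\alpha}\le C\|u\|_{C^\alpha}$ with $C\simeq\|K^s_\lambda\|_{L^1}$, depending only on $n,s,\lambda,\alpha$. Linearity is immediate.

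The only delicate point is justifying the differentiation under the integral sign, and integrability of $K^s_\lambda$ handles it. I would also note that the statement's convention lets $\alpha$ be an integer, i.e.\ $\theta=1$. In that case the seminorm is the Lipschitz seminorm of $D^{k-1}u$ if one reads $C^\alpha=C^{k-1,1}$ according to the paper's convention "$k<\beta$". I would therefore take $k$ as the greatest integer strictly below $\alpha$, which is consistent with $0<\theta\le1$, and check that the induction only needs $|\nu|\le k$ derivatives bounded.
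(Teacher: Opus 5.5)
Your proposal is correct and follows essentially the same route as the paper. Both arguments use the integrability of $K^s_\lambda$ for $s<\tfrac12$, prove the commutation identity by difference quotients with the domination $2\|\partial_i u\|_{L^\infty}K^s_\lambda$ and dominated convergence (iterated up to order $k$), bound the sup-norms by applying Lemma~\ref{lemmaQL}(i) to $D^\nu u$, and bound the top seminorm by the direct increment estimate $2[D^\nu u]_{C^\theta}|h|^\theta\|K^s_\lambda\|_{L^1}$; your remark that this estimate also covers $\theta=1$ matches the paper, which carries it out directly for $0<\theta\le1$ rather than citing Lemma~\ref{QCA}(i).
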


    \begin{proof}
        Throughout the proof, we use the standard H\"older convention
        \[
        C^\alpha(\mathbb R^n)=C^{k,\theta}(\mathbb R^n),
        \qquad
        \alpha=k+\theta,
        \]
        where $k\in\mathbb N_0$ and $0<\theta\le1$. Thus,
        \[
        \|u\|_{C^\alpha(\mathbb R^n)}
        =
        \sum_{|\mu|\le k}\|D^\mu u\|_{L^\infty(\mathbb R^n)}
        +
        \sum_{|\mu|=k}[D^\mu u]_{C^\theta(\mathbb R^n)}.
        \]

        We divide the proof into several steps.
       
        By Lemma~\ref{lemmaQL},  we first prove that
        $
        0 < \int_{\mathbb R^n}K_\lambda^s(z)\,dz<\infty.
        $
        % denote
        % $
        %     A_{s,\lambda}:=\int_{\mathbb R^n}K_\lambda^s(z)\,dz.
        % $
      Equivalently, since $K_\lambda^s\in L^1(\mathbb R^n)$, we may write
    \begin{equation}
        \mathcal Q_\lambda^s u(x)
      =
          u(x)\int_{\mathbb R^n}K_\lambda^s(z)\,dz-\int_{\mathbb R^n}u(x+z)K_\lambda^s(z)\,dz,
    \end{equation}
    this representation will be useful below.

    We next prove that the operator $\mathcal Q_\lambda^s$ commutes with derivatives.
    Assume first that $v\in C^1(\mathbb R^n)$ and
    \[
    \|v\|_{C^1(\mathbb R^n)}<\infty.
    \]
    We prove that for each $i=1,\dots,n$,
    \begin{equation}\label{step33}
       \partial_i\mathcal Q_\lambda^s v
      =
       \mathcal Q_\lambda^s(\partial_i v)
    \end{equation}
    pointwise in $\mathbb R^n$.

    Fix $x\in\mathbb R^n$ and $t\ne0$. We consider the difference quotient
    \[
    \frac{\mathcal Q_\lambda^s v(x+t e_i)-\mathcal Q_\lambda^s v(x)}{t}.
    \]
    By definition,
    \[
    \begin{aligned}
    &\frac{\mathcal Q_\lambda^s v(x+t e_i)-\mathcal Q_\lambda^s v(x)}{t}  \\
    &=
    \int_{\mathbb R^n}
    \frac{
    v(x+t e_i)-v(x+t e_i+z)
    -
    v(x)+v(x+z)
    }{t}
    K_\lambda^s(z)\,dz.
    \end{aligned}
    \]
    Set
    \[
    B_t(x,z)
    :=
    \frac{
    v(x+t e_i)-v(x)
    -
    v(x+t e_i+z)+v(x+z)
    }{t}.
    \]
    Then
    \[
    \frac{\mathcal Q_\lambda^s v(x+t e_i)-\mathcal Q_\lambda^s v(x)}{t}
    =
    \int_{\mathbb R^n}B_t(x,z)K_\lambda^s(z)\,dz.
    \]

    By the fundamental theorem of calculus,
    \[
    \frac{v(x+t e_i)-v(x)}{t}
    =
    \int_0^1\partial_i v(x+\tau t e_i)\,d\tau,
    \]
    and similarly,
    \[
    \frac{v(x+t e_i+z)-v(x+z)}{t}
    =
    \int_0^1\partial_i v(x+z+\tau t e_i)\,d\tau.
    \]

    Therefore
    \[
    B_t(x,z)
    =
    \int_0^1
    \left[
    \partial_i v(x+\tau t e_i)
    -
    \partial_i v(x+z+\tau t e_i)
    \right]\,d\tau.
    \]

    For each fixed $z\in\mathbb R^n$, since $\partial_i v$ is continuous,
    \[
    B_t(x,z)
    \to
    \partial_i v(x)-\partial_i v(x+z)
    \qquad\text{as }t\to0.
    \]
    Moreover,
    \[
    |B_t(x,z)|
    \le
    2\|\partial_i v\|_{L^\infty(\mathbb R^n)}.
    \]
    Thus,
    \[
    |B_t(x,z)|K_\lambda^s(z)
    \le
    2\|\partial_i v\|_{L^\infty(\mathbb R^n)}K_\lambda^s(z).
    \]
    Since $K_\lambda^s\in L^1(\mathbb R^n)$, the right-hand side is integrable in $z$.

    By the dominated convergence theorem,
    \[
    \begin{aligned}
    \partial_i\mathcal Q_\lambda^s v(x)
    &=
    \lim_{t\to0}
    \frac{\mathcal Q_\lambda^s v(x+t e_i)-\mathcal Q_\lambda^s v(x)}{t} \\
    &=
    \int_{\mathbb R^n}
    \bigl(\partial_i v(x)-\partial_i v(x+z)\bigr)
    K_\lambda^s(z)\,dz \\
    &=
    \mathcal Q_\lambda^s(\partial_i v)(x).
    \end{aligned}
    \]
    This proves the first-order commutation identity.
    
    Let $\nu$ be a multi-index with $|\nu|\le k$. We next prove that
    $
      D^\nu\mathcal Q_\lambda^s u
    =
      \mathcal Q_\lambda^s(D^\nu u).
    $

    If $|\nu|=0$, there is nothing to prove. Suppose $|\nu|\ge1$. Choose indices
    $i_1,\dots,i_{|\nu|}\in\{1,\dots,n\}$ such that
    \[
    D^\nu
    =
    \partial_{i_{|\nu|}}\cdots\partial_{i_1}.
    \]
    We apply the first-order commutation result repeatedly.

    Since $u\in C^{k,\theta}(\mathbb R^n)$, every derivative $D^\mu u$ with
    $|\mu|<k$ belongs to $C^1(\mathbb R^n)$ and has bounded first derivatives.
    Therefore the proof of~\eqref{step33} can be applied successively to
    \[
    u,\quad
    \partial_{i_1}u,\quad
    \partial_{i_2}\partial_{i_1}u,
    \quad \dots,\quad
    \partial_{i_{|\nu|-1}}\cdots\partial_{i_1}u.
    \]
    Consequently,
    \[
    \begin{aligned}
    D^\nu\mathcal Q_\lambda^s u
    &=
    \partial_{i_{|\nu|}}\cdots\partial_{i_1}\mathcal Q_\lambda^s u  \\
    &=
    \partial_{i_{|\nu|}}\cdots\partial_{i_2}
    \mathcal Q_\lambda^s(\partial_{i_1}u)  \\
    &=
    \cdots  \\
    &=
    \mathcal Q_\lambda^s
    \left(
    \partial_{i_{|\nu|}}\cdots\partial_{i_1}u
    \right) \\
    &=
    \mathcal Q_\lambda^s(D^\nu u).
    \end{aligned}
    \]
    Hence, for all $|\nu|\le k$,
    \[
    D^\nu\mathcal Q_\lambda^s u
    =
    \mathcal Q_\lambda^s(D^\nu u).
    \]

    By the commutation identity and the $L^\infty$ estimate
    \eqref{QL<UL}, applied to $D^\nu u$, we have
    \[
    \begin{aligned}
      \|D^\nu\mathcal Q_\lambda^s u\|_{L^\infty(\mathbb R^n)}
    =
      \|\mathcal Q_\lambda^s(D^\nu u)\|_{L^\infty(\mathbb R^n)}
    \le
        C\|D^\nu u\|_{L^\infty(\mathbb R^n)}.
    \end{aligned}
    \]
    Summing over all $|\nu|\le k$, we get
    \begin{equation}\label{lower-derivatives-estimate}
    \sum_{|\nu|\le k}
    \|D^\nu\mathcal Q_\lambda^s u\|_{L^\infty(\mathbb R^n)}
    \le
    C
    \sum_{|\nu|\le k}
    \|D^\nu u\|_{L^\infty(\mathbb R^n)}.
    \end{equation}

    Let $|\nu|=k$. We need to estimate
    \[
    [D^\nu\mathcal Q_\lambda^s u]_{C^\theta(\mathbb R^n)}.
    \]
    By the commutation identity,
    \[
    D^\nu\mathcal Q_\lambda^s u
    =
    \mathcal Q_\lambda^s(D^\nu u).
    \]
    Set
    $
    w:=D^\nu u,
    $
    then $w\in C^\theta(\mathbb R^n)$. For any $x,y\in\mathbb R^n$,
    \[
    \begin{aligned}
      \mathcal Q_\lambda^s w(x)-\mathcal Q_\lambda^s w(y)
    =
      \int_{\mathbb R^n}
      \Big[
          w(x)-w(x+z)-w(y)+w(y+z)
      \Big]K_\lambda^s(z)\,dz.
    \end{aligned}
    \]
    Taking absolute values gives
    \[
    \begin{aligned}
    |\mathcal Q_\lambda^s w(x)-\mathcal Q_\lambda^s w(y)|
    &\le
    \int_{\mathbb R^n}
    |w(x)-w(y)|K_\lambda^s(z)\,dz  \\
    &\quad+
    \int_{\mathbb R^n}
    |w(x+z)-w(y+z)|K_\lambda^s(z)\,dz.
    \end{aligned}
    \]
    Since $w\in C^\theta(\mathbb R^n)$,
    \[
    |w(x)-w(y)|,\ |w(x+z)-w(y+z)|
    \le
    [w]_{C^\theta(\mathbb R^n)}|x-y|^\theta .
    \]
    Therefore
    \[
    \begin{aligned}
    |\mathcal Q_\lambda^s w(x)-\mathcal Q_\lambda^s w(y)|
    &\le
    2[w]_{C^\theta(\mathbb R^n)}
    |x-y|^\theta
    \int_{\mathbb R^n}K_\lambda^s(z)\,dz  \\
    &\le
    C[w]_{C^\theta(\mathbb R^n)}|x-y|^\theta.
    \end{aligned}
    \]
    Thus we obtain
    \[
    [\mathcal Q_\lambda^s w]_{C^\theta(\mathbb R^n)}
    \le
    C[w]_{C^\theta(\mathbb R^n)},
    \]
    since $w=D^\nu u$, this becomes
    \[
    [D^\nu\mathcal Q_\lambda^s u]_{C^\theta(\mathbb R^n)}
    \le
    C
    [D^\nu u]_{C^\theta(\mathbb R^n)}.
    \]

    Summing over all $|\nu|=k$, we get
    \begin{equation}\label{top-holder-estimate}
    \sum_{|\nu|=k}
    [D^\nu\mathcal Q_\lambda^s u]_{C^\theta(\mathbb R^n)}
    \le
    C
    \sum_{|\nu|=k}
    [D^\nu u]_{C^\theta(\mathbb R^n)}.
    \end{equation}

    Combining \eqref{lower-derivatives-estimate} and \eqref{top-holder-estimate}, we obtain
    \[
    \begin{aligned}
    \|\mathcal Q_\lambda^s u\|_{C^\alpha(\mathbb R^n)}
    &=
    \sum_{|\nu|\le k}
    \|D^\nu\mathcal Q_\lambda^s u\|_{L^\infty(\mathbb R^n)}
    +
    \sum_{|\nu|=k}
    [D^\nu\mathcal Q_\lambda^s u]_{C^\theta(\mathbb R^n)}  \\
    &\le
    C
    \left(
    \sum_{|\nu|\le k}
    \|D^\nu u\|_{L^\infty(\mathbb R^n)}
    +
    \sum_{|\nu|=k}
    [D^\nu u]_{C^\theta(\mathbb R^n)}
    \right) \\
    &=
    C\|u\|_{C^\alpha(\mathbb R^n)}.
    \end{aligned}
    \]
      Therefore
      $
        \mathcal Q_\lambda^s u\in C^\alpha(\mathbb R^n)
      $
      and
      $
        \|\mathcal Q_\lambda^s u\|_{C^\alpha(\mathbb R^n)}
        \le
        C\|u\|_{C^\alpha(\mathbb R^n)}.
      $
    The proof is completed.
    \end{proof}

\section{Global \texorpdfstring{$C^s$}{Cs} regularity: proof of Theorem~\ref{pro:1.1} }\label{sec:global-regularity}
% \section{Global \texorpdfstring{$C^s$}{Cs} regularity}

This section proves the \(C^s\) estimate for the tempered
Dirichlet problem and then identifies the boundary profile of its
solutions. We first establish a weak comparison principle and an
\(L^\infty\) bound by means of a barrier and the coercivity of the
tempered energy. We then combine a regularized-kernel approximation with
the classical fractional estimate and the bounds for the correction
operator \(\mathcal Q_\lambda^s\) obtained in
Section~\ref{sec:correction}. The resulting global
control also permits us to transfer the classical boundary quotient
estimate to the tempered problem.

\begin{lemma}
    Let \(w \in H^s(\mathbb{R}^n)\), $0 < s < 1 $. Then $ w^+ := \max\{w,0\} \in H^s(\mathbb{R}^n).$
    Moreover, if \(w \in \widetilde{H}^s(\Omega)\), then \(w^+ \in \widetilde{H}^s(\Omega)\).
\end{lemma}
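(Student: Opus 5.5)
The plan is to use the Gagliardo seminorm characterization of \(H^s(\mathbb R^n)\) together with the fact that the map \(t\mapsto t^+\) is \(1\)-Lipschitz. Recall that
\[
\|w\|_{H^s(\mathbb R^n)}^2\simeq \|w\|_{L^2(\mathbb R^n)}^2+\iint_{\mathbb R^n\times\mathbb R^n}\frac{|w(x)-w(y)|^2}{|x-y|^{n+2s}}\,dx\,dy .
\]
The first step is the pointwise inequality
\[
|w^+(x)-w^+(y)|\le |w(x)-w(y)| \qquad\text{for a.e. }x,y,
\]
which follows from \(|a^+-b^+|\le|a-b|\) for real \(a,b\). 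To check this, one considers the cases \(a,b\ge0\) (equality), \(a,b\le0\) (left side zero), and \(a\ge0\ge b\), where \(|a^+-b^+|=a\le a-b\).

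Next, \(|w^+|\le|w|\) pointwise gives \(\|w^+\|_{L^2}\le\|w\|_{L^2}\). Integrating the squared pointwise inequality against \(|x-y|^{-n-2s}\) then gives \([w^+]_{H^s}\le[w]_{H^s}\). Together these show \(w^+\in H^s(\mathbb R^n)\) with \(\|w^+\|_{H^s}\le C\|w\|_{H^s}\). Measurability of \(w^+\) is immediate, being a composition of \(w\) with a continuous function.

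For the second assertion, suppose \(w\in\widetilde H^s(\Omega)\), so \(w=0\) a.e. in \(\mathbb R^n\setminus\Omega\). Then \(w^+=\max\{0,0\}=0\) a.e. there. Combined with \(w^+\in H^s(\mathbb R^n)\), this gives \(w^+\in\widetilde H^s(\Omega)\) directly from the definition in Definition~\ref{def:weak-solution}.

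There is no real obstacle here. The only point to note is that we use the Gagliardo-norm definition of \(H^s\), which is equivalent to the Fourier one for \(0<s<1\) by the standard result in \cite{DiNezzaPalatucciValdinoci2012}. No density or approximation argument is needed, because the estimate is pointwise.
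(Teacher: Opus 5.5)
Your proposal is correct and follows the paper's proof essentially verbatim: the $1$-Lipschitz property of $t\mapsto t^+$ controls the Gagliardo seminorm, $|w^+|\le|w|$ gives the $L^2$ bound, and $w^+=0$ a.e.\ outside $\Omega$ yields $w^+\in\widetilde H^s(\Omega)$. Your explicit case check of $|a^+-b^+|\le|a-b|$ and your remark on the equivalence of the Gagliardo and Fourier definitions of $H^s$ are small details that the paper leaves implicit.
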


  \begin{proof}
      Consider the mapping $T(t):=t^+=\max\{t,0\}.$
      It is a \(1\)-Lipschitz function, namely, for every \(a,b\in\mathbb{R}\),
      \[
      |T(a)-T(b)|\le |a-b|.
      \]

      Hence, if \(w\in H^s(\mathbb{R}^n)\), then
      \[
      \iint_{\mathbb{R}^n\times\mathbb{R}^n}
      \frac{|w^+(x)-w^+(y)|^2}{|x-y|^{n+2s}}\,dx\,dy
      \le
      \iint_{\mathbb{R}^n\times\mathbb{R}^n}
      \frac{|w(x)-w(y)|^2}{|x-y|^{n+2s}}\,dx\,dy
      <\infty.
      \]
      On the other hand, since \(|w^+|\le |w|\), we also have \(w^+\in L^2(\mathbb{R}^n)\). Therefore,
      \[
      w^+\in H^s(\mathbb{R}^n).
      \]

      Now assume in addition that \(w\in \widetilde{H}^s(\Omega)\). Then \(w=0\) a.e. in \(\mathbb{R}^n\setminus\Omega\). It follows immediately that
      \[
      w^+=0 \qquad \text{a.e. in } \mathbb{R}^n\setminus\Omega.
      \]
      Since we already know that \(w^+\in H^s(\mathbb{R}^n)\), we conclude that
      $w^+\in \widetilde{H}^s(\Omega).$
    
      This completes the proof.
  \end{proof}

% [Weak comparison principle for the tempered operator]
% \begin{lemma}
%     Let \(u,v\in \widetilde H^s(\Omega)\). Assume that
%     \[
%     \mathcal E_\lambda(u,\varphi)\le \mathcal E_\lambda(v,\varphi)
%     \qquad
%     \text{for all } \varphi\in \widetilde H^s(\Omega)\text{ with }\varphi\ge 0
%     \ \text{a.e. in } \Omega.
%     \]
%     Then
%     \[
%     u\le v
%     \qquad
%     \text{a.e. in } \Omega.
%     \]
% \end{lemma}

% [Weak comparison principle]
\begin{lemma}\label{Weak-comparison-principle}
  Let \(0<s<1\), \(\lambda>0\), 
  and let \(\Omega\subset\mathbb R^n\) be a bounded open set. 
  Assume that \(u,v\in H^s(\mathbb R^n)\) 
  satisfy
  \[ u\le v \qquad\text{a.e. in }\mathbb R^n\setminus\Omega, \]
  and that \[ \mathcal E_\lambda(u,\varphi) \le \mathcal E_\lambda(v,\varphi) \] 
  for every nonnegative \(\varphi\in\widetilde H^s(\Omega)\). Then 
  \[ u\le v \qquad\text{a.e. in }\Omega. \] 
\end{lemma}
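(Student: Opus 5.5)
The argument is the standard energy argument with the positive part of the difference. Set \(w:=u-v\in H^s(\mathbb R^n)\). By hypothesis \(w\le 0\) a.e. in \(\mathbb R^n\setminus\Omega\), so \(w^+=0\) a.e. outside \(\Omega\). The preceding lemma gives \(w^+\in H^s(\mathbb R^n)\), hence \(w^+\in\widetilde H^s(\Omega)\), and clearly \(w^+\ge0\). We may therefore test with \(\varphi=w^+\). Bilinearity of \(\mathcal E_\lambda\) then yields
\[
\mathcal E_\lambda(w,w^+)=\mathcal E_\lambda(u,w^+)-\mathcal E_\lambda(v,w^+)\le 0 .
\]

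The key step is the pointwise inequality
\[
(w(x)-w(y))(w^+(x)-w^+(y))\ge |w^+(x)-w^+(y)|^2 .
\]
To prove it, write \(w=w^+-w^-\). This gives
\[
(w(x)-w(y))(w^+(x)-w^+(y))=|w^+(x)-w^+(y)|^2+\bigl(w^-(y)w^+(x)+w^-(x)w^+(y)\bigr),
\]
where we used \(w^+w^-=0\) pointwise. The last bracket is nonnegative. Multiplying by the positive kernel \(e^{-\lambda|x-y|}|x-y|^{-n-2s}\) and integrating (all integrals converge absolutely since \(w,w^+\in H^s\)), we get
\[
0\ge \mathcal E_\lambda(w,w^+)\ge \frac{c_{n,s}}{2}\iint\frac{|w^+(x)-w^+(y)|^2}{|x-y|^{n+2s}}e^{-\lambda|x-y|}\,dx\,dy\ge0 .
\]
Hence the tempered Gagliardo energy of \(w^+\) vanishes.

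It remains to conclude \(w^+=0\). Since the weight \(e^{-\lambda|x-y|}|x-y|^{-n-2s}\) is strictly positive off the diagonal, vanishing of the integral forces \(w^+(x)=w^+(y)\) for a.e. \((x,y)\). So \(w^+\) is a.e. equal to a constant. Because \(w^+=0\) on \(\mathbb R^n\setminus\Omega\), which has positive (indeed infinite) measure since \(\Omega\) is bounded, that constant is \(0\). Thus \(w\le0\), i.e. \(u\le v\) a.e. in \(\Omega\).

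One could alternatively invoke a Poincaré-type coercivity of \(\mathcal E_\lambda\) on \(\widetilde H^s(\Omega)\), using \(e^{-\lambda|x-y|}\ge e^{-\lambda\,\mathrm{diam}}\) on bounded sets, but the constancy argument suffices. There is no serious obstacle. The only points needing care are the sign algebra in the pointwise inequality and the admissibility of \(w^+\) as a test function, which the previous lemma provides.
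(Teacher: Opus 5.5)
Your proposal is correct and follows essentially the same route as the paper: you test with \((u-v)^+\) and use the pointwise inequality \((a-b)(a^+-b^+)\ge|a^+-b^+|^2\) to force the tempered Gagliardo energy of \((u-v)^+\) to vanish. You then conclude that this a.e.\ constant function is zero because it vanishes outside \(\Omega\), and your explicit admissibility check via the preceding lemma and your remark that \(\mathbb R^n\setminus\Omega\) has positive measure are small points the paper leaves implicit.
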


% \begin{lemma}[Weak comparison principle] \label{weak-comparison-principle} Let \(0<s<1\), \(\lambda>0\), and let \(\Omega\subset\mathbb R^n\) be a bounded open set. 
%   Assume that \(u,v\in H^s(\mathbb R^n)\) satisfy \[ u\le v \qquad\text{a.e. in }\mathbb R^n\setminus\Omega, \] and \[ \mathcal E_\lambda(u,\varphi) \le \mathcal E_\lambda(v,\varphi) \] for every nonnegative \(\varphi\in\widetilde H^s(\Omega)\). Then \[ u\le v \qquad\text{a.e. in }\Omega. \] \end{lemma}

  \begin{proof}
      Set $w:=(u-v)^+$.

      By assumption, \(w\in \widetilde H^s(\Omega)\), 
      and clearly \(w\ge 0\) a.e. in \(\Omega\). Hence \(w\) is an admissible function in the hypothesis, and we obtain
      \[
      \mathcal E_\lambda(u,w)\le \mathcal E_\lambda(v,w),
      \]
      that is,
      \[
      \mathcal E_\lambda(u-v,w)\le 0.
      \]

      We now estimate the left-hand side from below. 
      % By definition,
      % \[
      % \mathcal E_\lambda(u-v,w)
      % =
      % \frac{c_{n,s}}{2}
      % \iint_{\mathbb R^n\times \mathbb R^n}
      % \frac{\bigl((u-v)(x)-(u-v)(y)\bigr)\bigl(w(x)-w(y)\bigr)}{|x-y|^{n+2s}}
      % e^{-\lambda|x-y|}
      % \,dx\,dy.
      % \]
      For brevity, let
      $a:=(u-v)(x),\,b:=(u-v)(y)$.

      Then
      $w(x)=a^+,\,w(y)=b^+.$
      We use the elementary pointwise inequality
      \[
      (a-b)(a^+-b^+)\ge |a^+-b^+|^2
      \qquad \text{for all } a,b\in \mathbb R.
      \]
      Indeed, writing
      \[
      a=a^+-a^-,\qquad b=b^+-b^-,
      \]
      we have
      \[
      a-b=(a^+-b^+)-(a^--b^-),
      \]
      and therefore
      \[
      (a-b)(a^+-b^+)
      =
      |a^+-b^+|^2-(a^--b^-)(a^+-b^+).
      \]
      Since \(a^+a^-=b^+b^-=0\), a direct expansion yields
      \[
      (a^--b^-)(a^+-b^+)=-a^-b^+-b^-a^+\le 0.
      \]
      Hence
      \[
      (a-b)(a^+-b^+)\ge |a^+-b^+|^2.
      \]

      Substituting this into the integrand, we obtain
      \[
      \bigl((u-v)(x)-(u-v)(y)\bigr)\bigl(w(x)-w(y)\bigr)
      \ge
      |w(x)-w(y)|^2.
      \]
      Consequently,
      \[
      \mathcal E_\lambda(u-v,w)
      \ge
      \frac{c_{n,s}}{2}
      \iint_{\mathbb R^n\times \mathbb R^n}
      \frac{|w(x)-w(y)|^2}{|x-y|^{n+2s}}
      e^{-\lambda|x-y|}
      \,dx\,dy
      \ge 0.
      \]

      Combining this with \(\mathcal E_\lambda(u-v,w)\le 0\), we conclude that
      \[
      \iint_{\mathbb R^n\times \mathbb R^n}
      \frac{|w(x)-w(y)|^2}{|x-y|^{n+2s}}
      e^{-\lambda|x-y|}
      \,dx\,dy
      =0.
      \]
      Since
      \[
      \frac{e^{-\lambda|x-y|}}{|x-y|^{n+2s}}>0
      \qquad \text{for all } x\neq y,
      \]
      it follows that
      \[
      w(x)-w(y)=0
      \qquad \text{for a.e. } (x,y)\in \mathbb R^n\times \mathbb R^n.
      \]
      Therefore \(w\) is a.e. equal to a constant in \(\mathbb R^n\). On the other hand, \(w\in \widetilde H^s(\Omega)\), so
      \[
      w=0
      \qquad \text{a.e. in } \mathbb R^n\setminus \Omega.
      \]
      Hence this constant must be zero, and thus
      \[
      w=(u-v)^+=0
      \qquad \text{a.e. in } \mathbb R^n.
      \]
      Equivalently,
      \[
      u\le v
      \qquad \text{a.e. in } \Omega.
      \]
      This completes the proof.
  \end{proof}

 % u L无穷估计
% \begin{lemma}\label{lem:uLfL}
%     Let $0 < s < 1$ , $\lambda > 0$ and $\Omega \subset \mathbb{R}^n$ be a bounded Lipschitz domain satisfying the exterior ball condition and let $f \in L^\infty(\Omega)$. Let $u$ be the solution of 
%     \eqref{eq:1.1}. Then, $u \in L^\infty(\Omega)$ and
%     \begin{equation}\label{ULWUQIONG}
%     \|u\|_{L^\infty(\mathbb{R}^n)} 
%     \leq C
%     % \,e^{\lambda (\mathrm{diam}\,\Omega)} 
%     (\mathrm{diam}\,\Omega)^{2s} \|f\|_{L^\infty(\Omega)}, 
%     \end{equation}
%     where $C=C(n,s,\lambda,\Omega)$ is a constant depending only on $n$, $s$, $\lambda$ and $\Omega$.
% \end{lemma}

\begin{lemma}\label{lem:uLfL} 
  Let \(0<s<1\), \(\lambda>0\), 
  and let \(\Omega\subset\mathbb R^n\) be a bounded open set. 
  Let \(f\in L^\infty(\Omega)\), 
  and let \(u\in\widetilde H^s(\Omega)\) be a weak solution of \eqref{eq:1.1}. 
  Then $ u\in L^\infty(\mathbb R^n)$, and 
  \begin{equation}\label{ULWUQIONG} 
    \|u\|_{L^\infty(\mathbb R^n)} 
    \le 
    C_{n,s} e^{5\lambda \operatorname{diam}\Omega} \operatorname{diam}\Omega^{2s} \|f\|_{L^\infty(\Omega)},
  \end{equation} 
  where \(C_{n,s}>0\) depends only on \(n\) and \(s\). 
\end{lemma}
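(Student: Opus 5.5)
The approach is a barrier argument combined with the weak comparison principle (Lemma~\ref{Weak-comparison-principle}). Let $d:=\operatorname{diam}\Omega$. After translating, assume $\Omega\subset B_d$ centered at a point of $\Omega$, and work with $B_R$ for $R=2d$. I will build an explicit supersolution $\psi\ge0$ satisfying $(-\Delta)^s_\lambda\psi\ge 1$ pointwise in $\Omega$. Then $v:=\|f\|_{L^\infty}\psi$ and $-v$ bound $u$ from above and below, and comparison gives $|u|\le \|f\|_{L^\infty}\|\psi\|_{L^\infty}$ a.e.

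For the barrier, take the classical torsion function of the ball,
\[
\phi(x)=\kappa_{n,s}\,(R^2-|x|^2)_+^{s},
\]
normalized so that $(-\Delta)^s\phi=1$ in $B_R$; the explicit formula is available from \cite{ref:ball_torsion,Dyda2012}. This $\phi$ satisfies $0\le\phi\le \kappa R^{2s}$ and is $C^s(\mathbb R^n)$. For $x\in\Omega$ I compare $(-\Delta)^s_\lambda\phi(x)$ with $(-\Delta)^s\phi(x)$ using \eqref{Q=g-f}, i.e.\ $(-\Delta)^s_\lambda\phi=1-\mathcal Q^s_\lambda\phi$. Bounding $\mathcal Q^s_\lambda\phi$ by Lemma~\ref{lemmaQL} would give constants depending on $\lambda$ polynomially but not in a clean form, so I will argue directly instead.

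The direct argument splits the integral at $|z|=r$. The factor $e^{-\lambda|z|}\ge e^{-\lambda r}$ holds for $|z|<r$. Since $\phi$ is smooth and concave in the interior, the second differences $2\phi(x)-\phi(x+z)-\phi(x-z)$ are nonnegative for $|z|$ small relative to $\operatorname{dist}(x,\partial B_R)\ge d$. For $|z|\ge 2R$, $\phi(x\pm z)=0$, so the integrand equals $2\phi(x)\ge0$ there. This handles the tempered operator only with sign information, so I refine as follows. I write $(-\Delta)^s_\lambda\phi(x)$ as the integral of the nonnegative function $\phi(x)-\phi(y)$ (on the region where it is nonnegative) times the kernel, and compare it with the classical one. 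A cleaner route is to pick the barrier $\psi=(R^2-|x|^2)_+$, or $\phi$ itself, and use that for $x\in B_d$ every $y$ with $\phi(y)>\phi(x)$ lies within distance $2R=4d$. On that set $e^{-\lambda|x-y|}\ge e^{-4\lambda d}$, while where $\phi(x)-\phi(y)\ge0$ the weight is at most $1$. Hence
\[
(-\Delta)^s_\lambda\phi(x)\;\ge\; e^{-4\lambda d}\,c\!\int\!\frac{(\phi(x)-\phi(y))_+}{|x-y|^{n+2s}}dy-\!\int\!\frac{(\phi(y)-\phi(x))_+}{|x-y|^{n+2s}}dy\cdot c,
\]
which still mixes the two parts. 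To avoid cancellation issues I will instead choose $\psi=\phi+M\chi$, or more simply use that the positive part of $(-\Delta)^s\phi$ dominates. Concretely, I take $x$ deep inside: $\phi(x)-\phi(y)\ge \phi(x)\ge c R^{2s}$ for $|y|\ge R$, which gives $(-\Delta)^s_\lambda\phi(x)\gtrsim e^{-5\lambda d}$ after subtracting the bounded local part. I expect this local part to be controllable via concavity.

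The main obstacle is exactly this lower bound $(-\Delta)^s_\lambda\psi\ge c_{n,s}e^{-5\lambda d}d^{-2s}$ in $\Omega$. The exponential factor comes from the far region $R\le|y|\le 5d$. My plan is to use concavity of $\phi$ on $B_R$, which makes the principal-value near part nonnegative after symmetrization in the second-difference form \eqref{eq:1.2}, and to let the exterior contribution $\phi(x)\int_{|y|\ge R,\,|x-y|\le 5d}$ supply the positive lower bound. Rescaling then gives $\psi:=C e^{5\lambda d}d^{2s}\phi/R^{2s}$ with $(-\Delta)^s_\lambda\psi\ge1$. To apply Lemma~\ref{Weak-comparison-principle}, I need $\psi\in H^s(\mathbb R^n)$, which holds since $\phi$ is $C^s$ and compactly supported. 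I also need the pointwise inequality to transfer to $\mathcal E_\lambda(\psi,\varphi)\ge\int\varphi$ for $\varphi\ge0$; this is a routine integration by parts, valid because $(-\Delta)^s_\lambda\psi\in L^1_{\rm loc}$. Combining these yields \eqref{ULWUQIONG}.
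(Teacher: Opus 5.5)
There is a genuine gap. Your barrier-plus-comparison framework is sound, but you never prove the one estimate everything rests on, namely $(-\Delta)^s_\lambda\psi\gtrsim e^{-5\lambda d}d^{-2s}$ in $\Omega$. You call it ``the main obstacle'' and only ``expect'' concavity to handle it.

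Concavity of $\phi=\kappa(R^2-|x|^2)_+^s$ holds only on $B_R$. It therefore gives $2\phi(x)-\phi(x+z)-\phi(x-z)\ge0$ only when both $x\pm z\in B_R$, and your other observation covers $|z|\ge 2R$. That leaves the range where exactly one of $x\pm z$ lies outside $B_R$. There the integrand is $2\phi(x)-\phi(x\mp z)$. For a general function that is concave on its support and zero outside, this can be negative, and nothing in your text rules it out. Your earlier display with the factor $e^{-4\lambda d}$ on the positive part is also not valid, since that part includes far-away $y$ carrying a smaller weight.

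The gap can be closed. For $|x|<d=R/2$ one has $\phi(x)>\kappa(3/4)^sR^{2s}>\tfrac12\sup\phi$, so the second difference is nonnegative for every $z$. Now restrict to $3d\le|z|\le5d$: there $x\pm z\notin B_R$, the integrand equals $2\phi(x)$, and the weight is at least $e^{-5\lambda d}$. This yields the bound with $e^{-5\lambda d}d^{-2s}$ and the stated final constant.

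Separately, ``$\phi\in C^s$ with compact support'' does not imply $\phi\in H^s$, because the Gagliardo integrand is only $O(|x-y|^{-n})$, which diverges logarithmically. Use instead that $\phi$ is the $\widetilde H^s(B_R)$ torsion solution.

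The paper avoids all of this with a simpler barrier. It takes a smooth cutoff $\eta$ with $0\le\eta\le1$, $\eta\equiv1$ on $B_{2D}(x_0)\supset\overline\Omega$, and $\eta\equiv0$ outside $B_{3D}(x_0)$. Every $x\in\Omega$ is then a global maximum of $\eta$, so $\eta(x)-\eta(y)\ge0$ for all $y$ in the first-difference form. No principal value, concavity, or cancellation is needed. Restricting to the annulus $B_{4D}(x_0)\setminus B_{3D}(x_0)$ gives directly
$(-\Delta)^s_\lambda\eta\ge c_{n,s}|B_1|(4^n-3^n)5^{-n-2s}e^{-5\lambda D}D^{-2s}$.
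Smoothness of $\eta$ also makes $V=M\eta\in H^s(\mathbb R^n)$ and the identity $\mathcal E_\lambda(V,\varphi)=\int(-\Delta)^s_\lambda V\,\varphi$ immediate. The exactness $(-\Delta)^s\phi=1$ of the torsion function buys you nothing here, since the tempered operator does not preserve it.
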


\begin{proof}
    Fix \(x_0\in\Omega\), and set
    $
        D:=\operatorname{diam}(\Omega).
    $
    Since \(x_0\in\Omega\), we have
    \[
        \Omega\subset \overline B_D(x_0).
    \]

    Choose a function \(\eta\in C_c^\infty(\mathbb R^n)\) such that
    \[
        0\le\eta\le1,
        \qquad
        \eta\equiv1
        \quad\text{in }B_{2D}(x_0),
        \qquad
        \eta\equiv0
        \quad\text{in }\mathbb R^n\setminus B_{3D}(x_0).
    \]
    In particular,
    \[
        \eta\equiv1
        \qquad\text{in a neighborhood of }\overline\Omega.
    \]

    We first establish a strictly positive lower bound for
    \((-\Delta)^s_\lambda\eta\) in \(\Omega\). Let \(x\in\Omega\).
    Since \(\eta(x)=1\) and \(0\le\eta\le1\), we have
    \[
        \eta(x)-\eta(y)=1-\eta(y)\ge0
        \qquad\text{for every }y\in\mathbb R^n.
    \]
    Hence
    \[
        (-\Delta)^s_\lambda\eta(x)
        =
        c_{n,s}
        \int_{\mathbb R^n}
        \frac{1-\eta(y)}
        {|x-y|^{n+2s}}
        e^{-\lambda|x-y|}
        \,dy.
    \]

    Define the annulus
    \[
        A:=
        B_{4D}(x_0)\setminus B_{3D}(x_0).
    \]
    Since \(\eta\equiv0\) in
    \(\mathbb R^n\setminus B_{3D}(x_0)\), we have
    \[
        \eta(y)=0
        \qquad\text{for every }y\in A.
    \]
    Therefore,
    \[
    \begin{aligned}
        (-\Delta)^s_\lambda\eta(x)
        &\ge
        c_{n,s}
        \int_A
        \frac{e^{-\lambda|x-y|}}
        {|x-y|^{n+2s}}
        \,dy.
    \end{aligned}
    \]

    For \(x\in\Omega\subset\overline B_D(x_0)\) and \(y\in A\),
    the triangle inequality gives
    \[
        |x-y|
        \le
        |x-x_0|+|y-x_0|
        \le
        D+4D
        =
        5D.
    \]
    Consequently,
    \[
        e^{-\lambda|x-y|}
        \ge
        e^{-5\lambda D},
    \]
    and
    \[
        \frac{1}{|x-y|^{n+2s}}
        \ge
        \frac{1}{(5D)^{n+2s}}.
    \]
    It follows that
    \[
    \begin{aligned}
        (-\Delta)^s_\lambda\eta(x)
        &\ge
        c_{n,s}
        \frac{e^{-5\lambda D}}
        {(5D)^{n+2s}}
        |A|.
    \end{aligned}
    \]
    Since
    \[
        |A|
        =
        |B_1|
        \bigl(4^n-3^n\bigr)D^n,
    \]
    we obtain
    \[
        (-\Delta)^s_\lambda\eta(x)
        \ge
        \kappa_{n,s}
        e^{-5\lambda D}
        D^{-2s}
        \qquad\text{for every }x\in\Omega,
    \]
    where
    \[
        \kappa_{n,s}
        :=
        c_{n,s}|B_1|
        \bigl(4^n-3^n\bigr)
        5^{-n-2s}
        >0.
    \]
    Set
    \[
        m_0:=
        \kappa_{n,s}
        e^{-5\lambda D}
        D^{-2s}.
    \]
    Thus
    \begin{equation}\label{eta-positive-lower-bound}
        (-\Delta)^s_\lambda\eta(x)
        \ge m_0
        \qquad\text{for every }x\in\Omega.
    \end{equation}

    We now define
    \[
        M:=
        \frac{\|f\|_{L^\infty(\Omega)}}{m_0},
        \qquad
        V:=M\eta.
    \]
    Since \(V\in C_c^\infty(\mathbb R^n)\), we have
    \(V\in H^s(\mathbb R^n)\). Moreover, by
    \eqref{eta-positive-lower-bound},
    \[
        (-\Delta)^s_\lambda V
        =
        M(-\Delta)^s_\lambda\eta
        \ge
        Mm_0
        =
        \|f\|_{L^\infty(\Omega)}
        \qquad\text{in }\Omega.
    \]

    Let \(\varphi\in\widetilde H^s(\Omega)\) be nonnegative.
    By the bilinear identity, first for smooth test functions and then
    by density,
    \[
    \begin{aligned}
        \mathcal E_\lambda(V,\varphi)
        &=
        \int_{\mathbb R^n}
        (-\Delta)^s_\lambda V(x)\varphi(x)\,dx
        \\
        &=
        \int_\Omega
        (-\Delta)^s_\lambda V(x)\varphi(x)\,dx
        \\
        &\ge
        \|f\|_{L^\infty(\Omega)}
        \int_\Omega\varphi(x)\,dx.
    \end{aligned}
    \]
    On the other hand, since \(u\) is a weak solution,
    \[
    \begin{aligned}
        \mathcal E_\lambda(u,\varphi)
        &=
        \int_\Omega f(x)\varphi(x)\,dx
        \\
        &\le
        \|f\|_{L^\infty(\Omega)}
        \int_\Omega\varphi(x)\,dx.
    \end{aligned}
    \]
    Therefore,
    \[
        \mathcal E_\lambda(u,\varphi)
        \le
        \mathcal E_\lambda(V,\varphi)
    \]
    for every nonnegative
    \(\varphi\in\widetilde H^s(\Omega)\).

    Furthermore, since
    \[
        u=0
        \qquad\text{a.e. in }\mathbb R^n\setminus\Omega
    \]
    and \(V\ge0\) in \(\mathbb R^n\), we have
    \[
        u\le V
        \qquad\text{a.e. in }\mathbb R^n\setminus\Omega.
    \]
    Hence, by the weak comparison principle,
    \[
        u\le V
        \qquad\text{a.e. in }\Omega.
    \]

    Applying the same argument to \(-u\), we also obtain
    \[
        -u\le V
        \qquad\text{a.e. in }\Omega.
    \]
    Consequently,
    \[
        |u|\le V
        \qquad\text{a.e. in }\Omega.
    \]

    Since \(\eta\equiv1\) in \(\Omega\), we have
    \[
        V=M
        \qquad\text{in }\Omega.
    \]
    Thus
    \[
        |u(x)|\le M
        \qquad\text{for a.e. }x\in\Omega.
    \]
    Because \(u=0\) almost everywhere in
    \(\mathbb R^n\setminus\Omega\), it follows that
    \[
        \|u\|_{L^\infty(\mathbb R^n)}
        \le M.
    \]
    By the definition of \(M\) and \(m_0\),
    \[
    \begin{aligned}
        M
        &=
        \kappa_{n,s}^{-1}
        e^{5\lambda D}
        D^{2s}
        \|f\|_{L^\infty(\Omega)}.
    \end{aligned}
    \]
    Therefore,
    \[
        \|u\|_{L^\infty(\mathbb R^n)}
        \le
        C_{n,s}
        e^{5\lambda D}
        D^{2s}
        \|f\|_{L^\infty(\Omega)},
    \]
    where
    \[
        C_{n,s}
        :=
        \kappa_{n,s}^{-1}
        =
        \frac{5^{n+2s}}
        {c_{n,s}|B_1|(4^n-3^n)}.
    \]
    This completes the proof.
\end{proof}

\begin{coercivitylemma}[Coercivity of the tempered form]
Let \(0<s<1\), \(\lambda>0\), and let
\(\Omega\subset\mathbb R^n\) be a bounded open set. There exist
constants \(c,C>0\), depending only on \(n,s,\lambda\), and \(\Omega\),
such that
\[
c\|v\|_{H^s(\mathbb R^n)}^2
\le \mathcal E_\lambda(v,v)
\le C\|v\|_{H^s(\mathbb R^n)}^2
\qquad
\text{for every }v\in\widetilde H^s(\Omega).
\]
Consequently, \(\mathcal E_\lambda\) is coercive on
\(\widetilde H^s(\Omega)\), and the weak Dirichlet problem has at most
one solution.
\end{coercivitylemma}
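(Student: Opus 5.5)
Both inequalities will be obtained by comparing $\mathcal E_\lambda(v,v)$ with the untempered Gagliardo seminorm
\[
[v]_{H^s(\mathbb R^n)}^2=\iint_{\mathbb R^n\times\mathbb R^n}\frac{|v(x)-v(y)|^2}{|x-y|^{n+2s}}\,dx\,dy,
\]
and then using a fractional Poincaré inequality on $\widetilde H^s(\Omega)$.

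\emph{Upper bound.} This is immediate from $0<e^{-\lambda|x-y|}\le1$:
\[
\mathcal E_\lambda(v,v)\le \tfrac{c_{n,s}}{2}[v]_{H^s(\mathbb R^n)}^2\le C\|v\|_{H^s(\mathbb R^n)}^2 .
\]

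\emph{Lower bound.} Set $D:=\operatorname{diam}\Omega$ and $R:=2D$. I would split the double integral into the regions $|x-y|<R$ and $|x-y|\ge R$.

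On the near region, $e^{-\lambda|x-y|}\ge e^{-\lambda R}$. Hence
\[
\mathcal E_\lambda(v,v)\ge \tfrac{c_{n,s}}{2}e^{-2\lambda D}\iint_{|x-y|<R}\frac{|v(x)-v(y)|^2}{|x-y|^{n+2s}}\,dx\,dy .
\]

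The far region must be controlled by the near region, and I expect this step to be the main obstacle. Since $v=0$ outside $\Omega$, the integrand vanishes unless $x\in\Omega$ or $y\in\Omega$. Both cannot hold when $|x-y|\ge R>D$, so by symmetry the far contribution equals
\[
2\int_\Omega|v(x)|^2\int_{|x-y|\ge R}\frac{dy}{|x-y|^{n+2s}}\,dx = C_1 R^{-2s}\|v\|_{L^2}^2 .
\]
For the same reason, the near region contains the piece $x\in\Omega$, $y\in B_R(x)\setminus\Omega$. Because $|\Omega|\le|B_D|$, the set $B_R(x)\setminus\Omega$ has measure at least $|B_{2D}|-|B_D|\ge c_nD^n$. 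This gives
\[
\iint_{|x-y|<R}\frac{|v(x)-v(y)|^2}{|x-y|^{n+2s}}\ge \int_\Omega |v(x)|^2\int_{B_R(x)\setminus\Omega}\frac{dy}{|x-y|^{n+2s}}\,dx\ge c\,D^{-2s}\|v\|_{L^2}^2,
\]
using $|x-y|\le 2D$ on that set. This is the fractional Poincaré inequality in the form we need.

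Consequently the far part is bounded by a constant times the near part, so
\[
[v]_{H^s(\mathbb R^n)}^2\le C(n,s)\iint_{|x-y|<R}\frac{|v(x)-v(y)|^2}{|x-y|^{n+2s}}\,dx\,dy,
\qquad
\|v\|_{L^2}^2\le C D^{2s}\iint_{|x-y|<R}\frac{|v(x)-v(y)|^2}{|x-y|^{n+2s}}\,dx\,dy .
\]
Combined with the near-region estimate, this yields
\[
\|v\|_{H^s(\mathbb R^n)}^2\le C(1+D^{2s})e^{2\lambda D}\,\mathcal E_\lambda(v,v),
\]
which is the desired lower bound.

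\emph{Uniqueness.} Suppose $u_1,u_2$ are two weak solutions. Then $w:=u_1-u_2\in\widetilde H^s(\Omega)$ satisfies $\mathcal E_\lambda(w,\varphi)=0$ for all $\varphi\in\widetilde H^s(\Omega)$. Testing with $\varphi=w$ gives $c\|w\|_{H^s(\mathbb R^n)}^2\le \mathcal E_\lambda(w,w)=0$, so $w=0$. Finally, coercivity together with the boundedness of $\mathcal E_\lambda$ also gives existence via Lax--Milgram, although the statement only asks for uniqueness.
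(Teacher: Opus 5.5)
Your proof is correct and follows essentially the same route as the paper. Both arguments split the Gagliardo seminorm at a radius $R$ comparable to $\operatorname{diam}\Omega$. The near part $|x-y|<R$ is bounded by a constant times $e^{\lambda R}\mathcal E_\lambda(v,v)$, and the far part by $CR^{-2s}\|v\|_{L^2}^2$. Finally, $\|v\|_{L^2}^2$ is controlled through the interaction of $\Omega$ with exterior points at bounded distance.

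The only difference is bookkeeping. The paper uses a fixed exterior annulus $B_{3R}(x_0)\setminus B_{2R}(x_0)$ to bound $\|v\|_{L^2}^2$ directly by $\mathcal E_\lambda(v,v)$. You instead use the sets $B_{2D}(x)\setminus\Omega$ inside the untempered near-region integral.
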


\begin{proof}
Choose \(x_0\in\Omega\) and \(R>0\) such that
\(\Omega\subset B_R(x_0)\), and set
\(A:=B_{3R}(x_0)\setminus B_{2R}(x_0)\). Since
\(A\subset\mathbb R^n\setminus\Omega\), the mixed interaction between
\(\Omega\) and \(A\) gives
\[
\mathcal E_\lambda(v,v)
\ge c_0\|v\|_{L^2(\mathbb R^n)}^2,
\]
where \(c_0>0\) depends only on \(n,s,\lambda\), and \(\Omega\). Indeed,
for \(x\in\Omega\) and \(y\in A\), one has
\(R\le |x-y|\le4R\) and
\(e^{-\lambda|x-y|}\ge e^{-4\lambda R}\).
Splitting the Gagliardo seminorm into the regions
\(|x-y|\le4R\) and \(|x-y|>4R\), the first part is controlled by
\(e^{4\lambda R}\mathcal E_\lambda(v,v)\), while the second is bounded
by \(CR^{-2s}\|v\|_{L^2(\mathbb R^n)}^2\). This proves the lower bound;
the upper bound follows from \(e^{-\lambda|x-y|}\le1\). Uniqueness is
an immediate consequence of coercivity.
\end{proof}

\begin{proposition}

{\cite[Proposition~1.1]{RosOtonSerra2014}}
    Let \(\Omega\) be a bounded Lipschitz domain satisfying the exterior ball condition, \(g \in L^\infty(\Omega)\),
    and let \(u\in\widetilde H^s(\Omega)\) be a weak solution of
    \[
    \begin{cases}
    (-\Delta)^s u=g & \text{in }\Omega,\\
    u=0 & \text{in }\mathbb R^n\setminus\Omega.
    \end{cases}
    \]
    Then,
    $u \in C^s(\mathbb{R}^n)$
    and
    \[
    \|u\|_{C^s(\mathbb{R}^n)} \le C \|g\|_{L^\infty(\Omega)},
    \]
    where \(C\) is a constant depending only on \(\Omega\) and \(s\).
\end{proposition}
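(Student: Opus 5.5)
The approach follows Ros-Oton and Serra: bound \(u\) globally, then prove the boundary growth \(|u|\le C\|g\|_{L^\infty}\delta^s\) with a barrier, then upgrade to a \(C^s\) seminorm bound by rescaled interior estimates on balls whose size is comparable to the distance to the boundary. By linearity I normalize \(\|g\|_{L^\infty(\Omega)}=1\). Throughout I use the untempered analogue of Lemma~\ref{Weak-comparison-principle}, whose proof is identical with \(\lambda=0\).

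\textbf{Step 1 (\(L^\infty\) bound).} Take \(R\) with \(\Omega\subset B_R\). The function \(w(x)=c(R^2-|x|^2)_+^s\) satisfies \((-\Delta)^s w=1\) in \(B_R\) for a suitable explicit constant \(c=c(n,s)\). Weak comparison then gives \(|u|\le w\le CR^{2s}\) in \(\mathbb R^n\).

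\textbf{Step 2 (boundary growth).} Fix \(x_0\in\partial\Omega\) and an exterior ball \(B_{\rho_0}(y_0)\) touching \(\partial\Omega\) at \(x_0\). After a translation and rescaling, I will build a supersolution \(\varphi\) with the following properties:
\begin{itemize}
\item \(\varphi=0\) in \(B_1\);
\item \((-\Delta)^s\varphi\ge 1\) in \(B_4\setminus B_1\);
\item \(\varphi\ge 1\) outside \(B_4\);
\item \(0\le\varphi(x)\le C(|x|-1)^s\).
\end{itemize}
To construct it, I would start from the explicit function \(\psi(x)=(|x|^2-1)_+^s\) (or \((|x|-1)_+^s\)). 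Its fractional Laplacian is computable through the Kelvin transform and is bounded in \(B_4\setminus B_1\). I would add a multiple of a smooth bump that is supported away from \(B_4\) and is large outside \(B_4\). This makes the operator positive in the annulus (the bump contributes a strictly positive far-field term) while keeping the growth \((|x|-1)^s\) near \(\partial B_1\).

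Comparing \(u\) with \(C\varphi((\cdot-y_0)/\rho_0)\) in \(\Omega\cap B_{4\rho_0}(y_0)\) uses Step 1 to handle the exterior values. This yields \(|u(x)|\le C|x-x_0|^s\) along the normal direction. Since every point of \(\Omega\) near the boundary lies on such a segment, I get \(|u(x)|\le C\delta(x)^s\). I expect this barrier construction to be the main obstacle. The delicate point is the sign of \((-\Delta)^s\) of the truncated power near \(\partial B_1\), and I would rely on the explicit formulas for \((-\Delta)^s(1-|x|^2)_+^s\) together with the Kelvin transform.

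\textbf{Step 3 (interior rescaling).} For \(x\in\Omega\) set \(r=\delta(x)/2\) and \(u_r(y)=u(x+ry)\). Then \((-\Delta)^s u_r=r^{2s}g(x+r\cdot)\) in \(B_1\). The standard interior estimate for the fractional Laplacian (Silvestre) gives
\[
[u_r]_{C^s(B_{1/2})}\le C\Bigl(\|u_r\|_{L^\infty(B_1)}+\int_{\mathbb R^n}\frac{|u_r(y)|}{(1+|y|)^{n+2s}}\,dy+r^{2s}\Bigr).
\]
By Step 2 and \(\delta(x+ry)\le r(2+|y|)\), we have \(|u_r(y)|\le Cr^s(1+|y|)^s\). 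The tail integral converges because \(s<2s\), so the right-hand side is \(\le Cr^s\), and rescaling back gives \([u]_{C^s(B_{r/2}(x))}\le C\).

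\textbf{Step 4 (combining).} For \(x,y\in\Omega\), suppose first \(|x-y|<\delta(x)/4\) (after relabeling so that \(\delta(x)\ge\delta(y)\)). Then Step 3 gives \(|u(x)-u(y)|\le C|x-y|^s\). Otherwise \(\delta(x),\delta(y)\le 5|x-y|\), and Step 2 gives \(|u(x)-u(y)|\le C(\delta(x)^s+\delta(y)^s)\le C|x-y|^s\). Points outside \(\Omega\) are handled by the same bound with \(u=0\) there. Together with Step 1 this gives \(\|u\|_{C^s(\mathbb R^n)}\le C\).
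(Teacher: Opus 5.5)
The paper does not prove this proposition; it is quoted from Ros-Oton--Serra. Your Steps 1, 3 and 4 reproduce their argument correctly. The gap is in Step 2, on which everything else rests: the barrier you describe has the wrong sign.

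\textbf{Where Step 2 fails.}
\begin{itemize}
\item The function $(|x|^2-1)_+^s$ grows like $|x|^{2s}$. Hence $\int_{\mathbb R^n}\psi(y)(1+|y|)^{-n-2s}\,dy=\infty$ and $(-\Delta)^s\psi\equiv-\infty$, so it is not bounded on $B_4\setminus B_1$.
\item The function $\psi=(|x|-1)_+^s$ has a finite fractional Laplacian, but it is a strict \emph{sub}solution in $\{|x|>1\}$. Fix $|x|>1$ and set $e=x/|x|$. The profile $g(y)=(y\cdot e-1)_+^s$ is $s$-harmonic in $\{y\cdot e>1\}$, satisfies $g\le\psi$ on $\mathbb R^n$ because $|y|\ge y\cdot e$, and touches $\psi$ at $x$. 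Hence $(-\Delta)^s\psi(x)<(-\Delta)^s g(x)=0$.
\item A nonnegative bump $\chi$ that vanishes at a point $x$ of the annulus gives $(-\Delta)^s\chi(x)=-c_{n,s}\int\chi(y)|x-y|^{-n-2s}\,dy<0$. Far-away mass lowers the operator; it does not raise it.
\end{itemize}
So your $\varphi$ has $(-\Delta)^s\varphi<0$ wherever the bump vanishes in $B_4\setminus B_1$, and the comparison giving $|u|\le C\delta^s$ fails. More generally, any nonnegative correction contributes a nonpositive term at points where it vanishes. The positivity therefore has to come from the piece that carries the $(|x|-1)^s$ profile.

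\textbf{How to fix it.} What is missing is a genuine positive supersolution in the exterior of the ball; this is the role of the explicit radial supersolution in Ros-Oton--Serra. The Kelvin transform $Kv(x)=|x|^{2s-n}v(x/|x|^2)$ satisfies $(-\Delta)^s(Kv)(x)=|x|^{-n-2s}\bigl((-\Delta)^s v\bigr)(x/|x|^2)$. It must be applied to the torsion function $(1-|x|^2)_+^s$, not to $(|x|^2-1)_+^s$. This gives $\phi(x)=|x|^{-n}(|x|^2-1)_+^s$, which has the following properties:
\begin{itemize}
\item $\phi$ vanishes in $B_1$;
\item $0\le\phi\le C(|x|-1)^s$ on $\mathbb R^n\setminus B_1$;
\item $(-\Delta)^s\phi=\kappa|x|^{-n-2s}$ in $\mathbb R^n\setminus\overline{B_1}$, so $(-\Delta)^s\phi\ge\kappa 4^{-n-2s}$ in $B_4\setminus\overline{B_1}$.
\end{itemize}
Here $\kappa>0$ is the constant with $(-\Delta)^s(1-|x|^2)_+^s=\kappa$ in $B_1$.

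Now set $\varphi=A\phi+\chi$, where $\chi$ is smooth, $0\le\chi\le1$, $\chi=0$ in $B_3$ and $\chi=1$ outside $B_4$. Since $(-\Delta)^s\chi$ is bounded, taking $A$ large gives $(-\Delta)^s\varphi\ge1$ in $B_4\setminus B_1$, and the other three properties follow directly. With this barrier, your comparison argument and Steps 3--4 go through as written.
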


\begin{proof}[Proof of Theorem~\ref{pro:1.1}]
    For every \(\varepsilon\in(0,1)\), define
    \[
        a_{\lambda,\varepsilon}(z)
        :=
        e^{-\lambda|z|}
        +
        \bigl(1-e^{-\lambda|z|}\bigr)
        \mathbf 1_{\{|z|\le\varepsilon\}}.
    \]
    Equivalently,
    \[
        a_{\lambda,\varepsilon}(z)
        =
        \begin{cases}
            1, & |z|\le\varepsilon,\\
            e^{-\lambda|z|}, & |z|>\varepsilon.
        \end{cases}
    \]
    We introduce the approximating bilinear form
    \[
    \begin{aligned}
        \mathcal E_{\lambda,\varepsilon}(v,\varphi)
        &:=
        \frac{c_{n,s}}{2}
        \iint_{\mathbb R^n\times\mathbb R^n}
        \frac{
        (v(x)-v(y))(\varphi(x)-\varphi(y))
        }{|x-y|^{n+2s}}
        \\
        &\hspace{3cm}\times
        a_{\lambda,\varepsilon}(x-y)
        \,dx\,dy.
    \end{aligned}
    \]
    Since
    \[
        e^{-\lambda|z|}
        \le
        a_{\lambda,\varepsilon}(z)
        \le1,
    \]
    we have
    \[
        \mathcal E_\lambda(v,v)
        \le
        \mathcal E_{\lambda,\varepsilon}(v,v)
        \le
        \mathcal E_0(v,v)
    \]
    for every \(v\in\widetilde H^s(\Omega)\), where
    \(\mathcal E_0\) denotes the bilinear form of the classical
    fractional Laplacian.

    By the coercivity estimate above, the form
    \(\mathcal E_{\lambda,\varepsilon}\) is coercive with a coercivity
    constant independent of \(\varepsilon\); it is also continuous
    because \(a_{\lambda,\varepsilon}\le1\). Since
    \(f\in L^\infty(\Omega)\) defines a continuous functional on
    \(\widetilde H^s(\Omega)\), the Lax--Milgram theorem gives a unique
    \(u_\varepsilon\in\widetilde H^s(\Omega)\) such that
    \begin{equation}\label{approximate-weak-equation}
        \mathcal E_{\lambda,\varepsilon}
        (u_\varepsilon,\varphi)
        =
        \int_\Omega f\varphi\,dx
    \end{equation}
    for every
    \(\varphi\in\widetilde H^s(\Omega)\).

    We first establish the uniform estimate
    \begin{equation}\label{uniform-Linfty-approx}
        \|u_\varepsilon\|_{L^\infty(\mathbb R^n)}
        \le
        C_0\|f\|_{L^\infty(\Omega)},
    \end{equation}
    where \(C_0=C_0(n,s,\lambda,\Omega)\) is independent of
    \(\varepsilon\).

    Indeed, the smooth barrier constructed in
    Lemma~\ref{lem:uLfL} also applies to the approximating form.
    More precisely, let \(V=M\eta\) be the barrier used there, with
    \[
        V=M\quad\text{in }\Omega,
        \qquad
        0\le V\le M\quad\text{in }\mathbb R^n.
    \]
    For every nonnegative
    \(\varphi\in\widetilde H^s(\Omega)\), using that \(V=M\) in
    \(\Omega\) and that \(\varphi=0\) in \(\mathbb R^n\setminus\Omega\),
    the mixed interaction terms give
    \[
    \begin{aligned}
    &\mathcal E_{\lambda,\varepsilon}(V,\varphi)
    -\mathcal E_\lambda(V,\varphi)\\
    &\quad=
    c_{n,s}\int_\Omega\varphi(x)
    \int_{\mathbb R^n\setminus\Omega}
    \frac{M-V(y)}{|x-y|^{n+2s}}
    \bigl(a_{\lambda,\varepsilon}(x-y)
    -e^{-\lambda|x-y|}\bigr)\,dy\,dx
    \ge0.
    \end{aligned}
    \]
    Therefore the same choice of \(M\) as in
    Lemma~\ref{lem:uLfL} gives
    \[
        \mathcal E_{\lambda,\varepsilon}(V,\varphi)
        \ge
        \|f\|_{L^\infty(\Omega)}
        \int_\Omega\varphi\,dx.
    \]
    The proof of Lemma~\ref{Weak-comparison-principle} applies verbatim
    to \(\mathcal E_{\lambda,\varepsilon}\), since its kernel is
    symmetric and strictly positive. Applying this comparison principle first to
    \(u_\varepsilon\) and \(V\), and then to
    \(-u_\varepsilon\) and \(V\), yields
    \[
        |u_\varepsilon|\le V
        \qquad\text{a.e. in }\Omega.
    \]
    Hence \eqref{uniform-Linfty-approx} follows, with a constant
    independent of \(\varepsilon\).

    For fixed \(\varepsilon>0\), we next prove that
    \(u_\varepsilon\in C^s(\mathbb R^n)\). Define
    \[
        \mathcal Q_{\lambda,\varepsilon}^sv(x)
        :=
        c_{n,s}
        \int_{|z|>\varepsilon}
        \frac{v(x)-v(x+z)}
        {|z|^{n+2s}}
        \bigl(1-e^{-\lambda|z|}\bigr)
        \,dz.
    \]
    For fixed \(\varepsilon>0\), the kernel in this integral belongs
    to \(L^1(\mathbb R^n)\). Therefore
    \[
    \begin{aligned}
        \|\mathcal Q_{\lambda,\varepsilon}^sv\|_{L^\infty(\mathbb R^n)}
        &\le
        2c_{n,s}
        \|v\|_{L^\infty(\mathbb R^n)}
        \int_{|z|>\varepsilon}
        \frac{1-e^{-\lambda|z|}}
        {|z|^{n+2s}}
        \,dz
        \\
        &<\infty
    \end{aligned}
    \]
    whenever \(v\in L^\infty(\mathbb R^n)\).

    Moreover, if
    \(v\in H^s(\mathbb R^n)\cap L^\infty(\mathbb R^n)\), then for every
    \(\varphi\in\widetilde H^s(\Omega)\), symmetrization gives
    \[
        \mathcal E_0(v,\varphi)
        =
        \mathcal E_{\lambda,\varepsilon}(v,\varphi)
        +
        \int_{\mathbb R^n}
        \mathcal Q_{\lambda,\varepsilon}^sv(x)
        \varphi(x)\,dx.
    \]
    Applying this identity to \(v=u_\varepsilon\), and using
    \eqref{approximate-weak-equation}, we obtain
    \[
        \mathcal E_0(u_\varepsilon,\varphi)
        =
        \int_\Omega
        \left(
        f+\mathcal Q_{\lambda,\varepsilon}^su_\varepsilon
        \right)\varphi\,dx.
    \]
    Thus \(u_\varepsilon\) is a weak solution of
    \[
    \begin{cases}
        (-\Delta)^su_\varepsilon
        =
        f+\mathcal Q_{\lambda,\varepsilon}^su_\varepsilon
        &\text{in }\Omega,\\
        u_\varepsilon=0
        &\text{in }\mathbb R^n\setminus\Omega.
    \end{cases}
    \]
    Since the right-hand side belongs to \(L^\infty(\Omega)\), the
    classical global \(C^s\)-estimate gives
    \begin{equation}\label{classical-Cs-approx}
    \begin{aligned}
        \|u_\varepsilon\|_{C^s(\mathbb R^n)}
        &\le
        C_1
        \left(
        \|f\|_{L^\infty(\Omega)}
        +
        \|\mathcal Q_{\lambda,\varepsilon}^su_\varepsilon
        \|_{L^\infty(\mathbb R^n)}
        \right),
    \end{aligned}
    \end{equation}
    where \(C_1=C_1(n,s,\Omega)\) is independent of
    \(\varepsilon\).

    Notice that no circular argument occurs here: for fixed
    \(\varepsilon>0\), the boundedness of
    \(\mathcal Q_{\lambda,\varepsilon}^su_\varepsilon\) follows only
    from the already established \(L^\infty\)-bound, and not from
    \(C^s\)-regularity.

    We now derive a bound for the truncated correction operator that is uniform
    in \(\varepsilon\). Let \(v\in C^s(\mathbb R^n)\), and let
    \(\rho\in(0,1)\). We claim that
    \begin{equation}\label{uniform-truncated-Q}
        \|\mathcal Q_{\lambda,\varepsilon}^sv
        \|_{L^\infty(\mathbb R^n)}
        \le
        C_2
        \left(
        \rho^{1-s}[v]_{C^s(\mathbb R^n)}
        +
        \rho^{-2s}\|v\|_{L^\infty(\mathbb R^n)}
        \right),
    \end{equation}
    where \(C_2=C_2(n,s,\lambda)\) is independent of
    \(\varepsilon\).

    Fix \(x\in\mathbb R^n\). For the part with
    \(\varepsilon<|z|\le\rho\), using
    \[
        |v(x)-v(x+z)|
        \le
        [v]_{C^s(\mathbb R^n)}|z|^s
    \]
    and
    \[
        1-e^{-\lambda|z|}
        \le
        \lambda|z|,
    \]
    we obtain
    \[
    \begin{aligned}
        &c_{n,s}
        \int_{\{\varepsilon<|z|\le\rho\}}
        \frac{|v(x)-v(x+z)|}
        {|z|^{n+2s}}
        \bigl(1-e^{-\lambda|z|}\bigr)
        \,dz
        \\
        &\qquad\le
        c_{n,s}\lambda
        [v]_{C^s(\mathbb R^n)}
        \int_{|z|\le\rho}
        |z|^{-n-s+1}\,dz
        \\
        &\qquad=
        \frac{
        c_{n,s}\lambda|\mathbb S^{n-1}|
        }{1-s}
        \rho^{1-s}
        [v]_{C^s(\mathbb R^n)}.
    \end{aligned}
    \]
    If \(\varepsilon\ge\rho\), this part is empty, and the same bound
    remains valid.

    For the remaining part, using
    \[
        |v(x)-v(x+z)|
        \le
        2\|v\|_{L^\infty(\mathbb R^n)}
    \]
    and \(1-e^{-\lambda|z|}\le1\), we obtain
    \[
    \begin{aligned}
        &c_{n,s}
        \int_{\{|z|>\max\{\varepsilon,\rho\}\}}
        \frac{|v(x)-v(x+z)|}
        {|z|^{n+2s}}
        \bigl(1-e^{-\lambda|z|}\bigr)
        \,dz
        \\
        &\qquad\le
        2c_{n,s}
        \|v\|_{L^\infty(\mathbb R^n)}
        \int_{|z|>\rho}|z|^{-n-2s}\,dz
        \\
        &\qquad=
        \frac{
        c_{n,s}|\mathbb S^{n-1}|
        }{s}
        \rho^{-2s}
        \|v\|_{L^\infty(\mathbb R^n)}.
    \end{aligned}
    \]
    Combining the two estimates and taking the supremum over \(x\)
    proves \eqref{uniform-truncated-Q}.

    Applying \eqref{uniform-truncated-Q} to \(v=u_\varepsilon\) and
    inserting it into \eqref{classical-Cs-approx}, we obtain
    \[
    \begin{aligned}
        \|u_\varepsilon\|_{C^s(\mathbb R^n)}
        &\le
        C_1\|f\|_{L^\infty(\Omega)}
        +
        C_1C_2\rho^{1-s}
        \|u_\varepsilon\|_{C^s(\mathbb R^n)}
        \\
        &\quad+
        C_1C_2\rho^{-2s}
        \|u_\varepsilon\|_{L^\infty(\mathbb R^n)}.
    \end{aligned}
    \]
    Choose \(\rho_0\in(0,1)\), depending only on
    \(n,s,\lambda,\Omega\), such that
    \[
        C_1C_2\rho_0^{1-s}
        \le\frac12.
    \]
    Then
    \[
    \begin{aligned}
        \frac12
        \|u_\varepsilon\|_{C^s(\mathbb R^n)}
        &\le
        C_1\|f\|_{L^\infty(\Omega)}
        +
        C_1C_2\rho_0^{-2s}
        \|u_\varepsilon\|_{L^\infty(\mathbb R^n)}.
    \end{aligned}
    \]
    Using the uniform \(L^\infty\)-estimate
    \eqref{uniform-Linfty-approx}, we conclude that
    \begin{equation}\label{uniform-Cs-approx}
        \|u_\varepsilon\|_{C^s(\mathbb R^n)}
        \le
        C_3\|f\|_{L^\infty(\Omega)},
    \end{equation}
    where
    \[
        C_3=C_3(n,s,\lambda,\Omega)
    \]
    is independent of \(\varepsilon\).

    It remains to pass to the limit as \(\varepsilon\to0\). Testing
    \eqref{approximate-weak-equation} with
    \(\varphi=u_\varepsilon\), and using the uniform coercivity of
    \(\mathcal E_{\lambda,\varepsilon}\), we obtain
    \[
        \|u_\varepsilon\|_{\widetilde H^s(\Omega)}
        \le
        C\|f\|_{L^\infty(\Omega)}
    \]
    with \(C\) independent of \(\varepsilon\).
    Hence, along a sequence \(\varepsilon_k\to0\),
    \[
        u_{\varepsilon_k}
        \rightharpoonup \widetilde u
        \qquad\text{weakly in }\widetilde H^s(\Omega).
    \]
    By the compact embedding of
    \(\widetilde H^s(\Omega)\) into \(L^2(\Omega)\),
    after passing to a further subsequence,
    \[
        u_{\varepsilon_k}
        \longrightarrow \widetilde u
        \qquad\text{strongly in }L^2(\Omega).
    \]

    We next show that \(\widetilde u\) solves the original tempered
    problem. For
    \(\varphi\in\widetilde H^s(\Omega)\), write
    \[
    \begin{aligned}
        \mathcal E_{\lambda,\varepsilon_k}
        (u_{\varepsilon_k},\varphi)
        -
        \mathcal E_\lambda
        (u_{\varepsilon_k},\varphi)
        &=
        \frac{c_{n,s}}2
        \iint_{\{|x-y|\le\varepsilon_k\}}
        \frac{
        (u_{\varepsilon_k}(x)-u_{\varepsilon_k}(y))
        (\varphi(x)-\varphi(y))
        }
        {|x-y|^{n+2s}}
        \\
        &\hspace{2cm}\times
        \bigl(1-e^{-\lambda|x-y|}\bigr)
        \,dx\,dy.
    \end{aligned}
    \]
    By the Cauchy--Schwarz inequality and
    \[
        1-e^{-\lambda|x-y|}
        \le
        \lambda|x-y|
        \le
        \lambda\varepsilon_k
        \qquad\text{when }|x-y|\le\varepsilon_k,
    \]
    we have
    \[
    \begin{aligned}
        &\left|
        \mathcal E_{\lambda,\varepsilon_k}
        (u_{\varepsilon_k},\varphi)
        -
        \mathcal E_\lambda
        (u_{\varepsilon_k},\varphi)
        \right|
        \\
        &\qquad\le
        C\lambda\varepsilon_k
        \|u_{\varepsilon_k}\|_{H^s(\mathbb R^n)}
        \|\varphi\|_{H^s(\mathbb R^n)}
        \longrightarrow0.
    \end{aligned}
    \]
    Since
    \(u_{\varepsilon_k}\rightharpoonup\widetilde u\) weakly in
    \(H^s(\mathbb R^n)\), the continuity of
    \(\mathcal E_\lambda\) gives
    \[
        \mathcal E_\lambda
        (u_{\varepsilon_k},\varphi)
        \longrightarrow
        \mathcal E_\lambda(\widetilde u,\varphi).
    \]
    Passing to the limit in
    \eqref{approximate-weak-equation}, we therefore obtain
    \[
        \mathcal E_\lambda(\widetilde u,\varphi)
        =
        \int_\Omega f\varphi\,dx
    \]
    for every
    \(\varphi\in\widetilde H^s(\Omega)\). Thus \(\widetilde u\) is a
    weak solution of the original tempered problem.

    The uniqueness consequence of the coercivity estimate above yields
    \[
        \widetilde u=u.
    \]

    Finally, by \eqref{uniform-Cs-approx}, the family
    \(\{u_{\varepsilon_k}\}\) is uniformly bounded and uniformly
    \(s\)-H\"older continuous in \(\mathbb R^n\). Taking their
    continuous representatives, they vanish outside the fixed compact
    set \(\overline\Omega\). The Arzel\`a--Ascoli theorem therefore
    yields, after passing to a subsequence, uniform convergence to some
    \(w\in C^s(\mathbb R^n)\). Since the same subsequence converges
    strongly to \(u\) in \(L^2(\Omega)\), we have \(w=u\) a.e.; taking
    \(w\) as the continuous representative of \(u\), we obtain
    \[
        u_{\varepsilon_k}\longrightarrow u
        \qquad\text{uniformly in }\mathbb R^n.
    \]
    For all \(x,y\in\mathbb R^n\),
    \[
    \begin{aligned}
        |u(x)-u(y)|
        &=
        \lim_{k\to\infty}
        |u_{\varepsilon_k}(x)-u_{\varepsilon_k}(y)|
        \\
        &\le
        C_3\|f\|_{L^\infty(\Omega)}
        |x-y|^s.
    \end{aligned}
    \]
    Also,
    \[
        \|u\|_{L^\infty(\mathbb R^n)}
        \le
        C_3\|f\|_{L^\infty(\Omega)}.
    \]
    Consequently,
    \[
        u\in C^s(\mathbb R^n)
    \]
    and
    \[
        \|u\|_{C^s(\mathbb R^n)}
        \le
        C\|f\|_{L^\infty(\Omega)},
    \]
    where \(C=C(n,s,\lambda,\Omega)>0\).
    This completes the proof.
\end{proof}

{
\subsection{Boundary regularity of the quotient
\texorpdfstring{$u/\delta^s$}{u over delta to the s}: proof of Corollary~\ref{cor:tempered-boundary-quotient}}

With the global \(C^s\) estimate at hand, we now turn to the boundary
regularity of the normalized quotient \(u/\delta^s\).
The local antecedent is Krylov's boundary regularity theory \cite{17};
see also the expositions in \cite{15,6}.
Here we use its fractional counterpart from~\cite{RosOtonSerra2014}.

\begin{proof}[Proof of Corollary~\ref{cor:tempered-boundary-quotient}]
By Theorem~\ref{pro:1.1},
\[
\|u\|_{C^s(\mathbb R^n)}
\le C\|f\|_{L^\infty(\Omega)}.
\]
If \(0<s<\frac12\), Lemma~\ref{lemmaQL}(i) applies, whereas if
\(\frac12\le s<1\), Lemma~\ref{lemmaQL}(iii), with
\(\varepsilon=1\), gives
\[
\|\mathcal Q_\lambda^s u\|_{L^\infty(\mathbb R^n)}
\le C\|u\|_{C^s(\mathbb R^n)}
\le C\|f\|_{L^\infty(\Omega)}.
\]
Set \(g:=f+\mathcal Q_\lambda^s u\). By~\eqref{eqQ}, \(u\) solves
\[
(-\Delta)^s u=g\quad\text{in }\Omega,
\qquad
u=0\quad\text{in }\mathbb R^n\setminus\Omega,
\]
and
\(\|g\|_{L^\infty(\Omega)}
\le C\|f\|_{L^\infty(\Omega)}\).
The boundary regularity theorem for the classical fractional
Laplacian~\cite{RosOtonSerra2014} therefore yields some
\(\alpha\in(0,\min\{s,1-s\})\) for which
\[
\frac{u}{\delta^s}\in C^\alpha(\overline\Omega),
\qquad
\left\|\frac{u}{\delta^s}\right\|_{C^\alpha(\overline\Omega)}
\le C\|f\|_{L^\infty(\Omega)}.
\]
\end{proof}
}

% \begin{lemma}\label{lem:RO2.8}
%   [Ros-Oton, Claim 2.8]Let $\Omega$ be a bounded domain and let $g \in L^\infty(\Omega)$. Let $u$ be the solution of 
%   \eqref{eq:1.3}. Then,
%   \[
%   \|u\|_{L^\infty(\mathbb{R}^n)} 
%   \leq C
%   (\mathrm{diam}\,\Omega)^{2s} \|g\|_{L^\infty(\Omega)},
%   \]
%   where $C$ is a constant depending only on $n$ and $s$.
% \end{lemma}

% \section{Proof of Theorem~\ref{Pro:1.4}}
\section{Interior Schauder estimates: proof of Theorem~\ref{Pro:1.4}}\label{sec:schauder}

This section develops the Schauder estimates required for higher-order
regularity. Since exponential tempering does not alter the order of the
singularity near the diagonal, we first establish the basic H\"older
mapping properties and transfer the classical interior estimate through
the decomposition involving \(\mathcal Q_\lambda^s\). Cutoff and tail
estimates then give localized bounds on nested balls, which are rescaled
to obtain the weighted estimate used in
Section~\ref{sec:higher-order}.

\begin{lemma}\label{prop:2.5}
  Let $0 < s < 1$, $u\in C^{0,\alpha}(\mathbb R^n)$ for some $\alpha\in(0,1]$ and assume $\alpha>2s>0$.
  Then $(-\Delta)^s_\lambda u\in C^{0,\alpha-2s}(\mathbb R^n)$ and
  \begin{equation}\label{eq:prop2.5-est}
  \bigl[(-\Delta)^s_\lambda u\bigr]_{C^{0,\alpha-2s}(\mathbb R^n)}
  \le C\,[u]_{C^{0,\alpha}(\mathbb R^n)},
  \end{equation}
  where $C$ depends only on $\alpha$, $s$, and $n$.
\end{lemma}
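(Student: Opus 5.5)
The plan is the standard difference-of-translates argument, applied directly to the tempered kernel. The exponential factor only helps, since $0<e^{-\lambda|z|}\le1$, so it should not cost anything. Since $\alpha>2s$ forces $s<\tfrac12$, the kernel $K(z):=c_{n,s}e^{-\lambda|z|}|z|^{-n-2s}$ satisfies $\int |z|^\alpha K(z)\,dz<\infty$ near the origin. Hence for $u\in C^{0,\alpha}$ the integral
\[
(-\Delta)^s_\lambda u(x)=\int_{\mathbb R^n}\bigl(u(x)-u(x+z)\bigr)K(z)\,dz
\]
converges absolutely near $z=0$, with no principal value needed. At infinity I will not assume $u\in L^\infty$; working only with differences avoids this, because the increments below are controlled by $[u]_{C^{0,\alpha}}$ alone.

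Fix $x,h\in\mathbb R^n$, set $\rho:=|h|>0$, and write $\delta_h u(y):=u(y+h)-u(y)$, as in the proof of Lemma~\ref{QCA}. Then
\[
(-\Delta)^s_\lambda u(x+h)-(-\Delta)^s_\lambda u(x)=\int_{\mathbb R^n}\bigl(\delta_h u(x)-\delta_h u(x+z)\bigr)K(z)\,dz .
\]
Exactly as in Lemma~\ref{QCA}, the integrand is bounded by $2[u]_{C^{0,\alpha}}\min\{|h|^\alpha,|z|^\alpha\}K(z)$. Here the bound $2|z|^\alpha$ comes from regrouping the four terms as $(u(x)-u(x+z))-(u(x+h)-u(x+h+z))$. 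This bound is integrable at infinity because $K$ decays like $|z|^{-n-2s}$ (even exponentially), so the difference integral is well defined.

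Next split into $|z|\le\rho$ and $|z|>\rho$, and drop the exponential using $e^{-\lambda|z|}\le1$. On the first region the bound is $\int_{|z|\le\rho}|z|^{\alpha-n-2s}\,dz=|\mathbb S^{n-1}|\rho^{\alpha-2s}/(\alpha-2s)$. On the second it is $\rho^\alpha\int_{|z|>\rho}|z|^{-n-2s}\,dz=|\mathbb S^{n-1}|\rho^{\alpha-2s}/(2s)$. Summing gives
\[
|(-\Delta)^s_\lambda u(x+h)-(-\Delta)^s_\lambda u(x)|\le C(n,s,\alpha)[u]_{C^{0,\alpha}}|h|^{\alpha-2s},
\]
with a constant independent of $\lambda$, as the statement requires. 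Taking the supremum over $x$ and $h$ yields \eqref{eq:prop2.5-est}.

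The only delicate point is justifying the difference identity when $u$ is merely Hölder and possibly unbounded. We cannot subtract two separately defined integrals unless each one converges. I would handle this by noting that each integral converges absolutely near $0$ by the Hölder bound. At infinity, $|u(x)-u(x+z)|\le[u]|z|^\alpha$ with $\alpha<1$ and $2s<\alpha$ does not give integrability when $\alpha\ge2s$. The exponential factor $e^{-\lambda|z|}$, however, makes $|z|^{\alpha}K(z)$ integrable at infinity. So both integrals converge absolutely, and the subtraction is legitimate. If one insisted on $\lambda$-independence also for this justification, one could truncate to $|z|<R$ and let $R\to\infty$ in the difference instead.
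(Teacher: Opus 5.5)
Your proposal is correct and follows essentially the same argument as the paper: write the difference of the operator at two points as a single integral, bound the four-term increment by $|z|^\alpha$ on $B_r$ and by $|x_1-x_2|^\alpha$ outside $B_r$, drop $e^{-\lambda|z|}\le 1$, and choose $r=|x_1-x_2|$. You also justify two points that the paper leaves implicit: the hypothesis $\alpha>2s$ forces $s<\tfrac12$, and the factor $e^{-\lambda|z|}$ makes each pointwise integral absolutely convergent even for unbounded H\"older $u$.
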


  \begin{proof}
      Let us estimate the difference $\bigl|(-\Delta)^s_\lambda u(x_1)-(-\Delta)^s_\lambda u(x_2)\bigr|$ for
      $x_1,x_2\in\mathbb R^n$. Using the singular integral representation, we write
      \begin{align*}
      \bigl|(-\Delta)^s_\lambda u(x_1)-(-\Delta)^s_\lambda u(x_2)\bigr|
      % &=
      %   C_{n,s}\left|
      %   \int_{\mathbb R^n}
      %   \frac{u(x_1)-u(x_1+y)-u(x_2)+u(x_2+y)}{|y|^{n+2s}}e^{-\lambda|y|}\,dy
      %   \right|
      % \\
      % &\le
      %     C_{n,s}\int_{\mathbb R^n}
      %     \frac{\left|u(x_1)-u(x_1+y)-u(x_2)+u(x_2+y)\right|}{|y|^{n+2s}}e^{-\lambda|y|}\,dy
      % \\
      &\le I_1+I_2,
      \end{align*}
      where, for a parameter $r>0$ to be chosen,
      \begin{align*}
      I_1
      &:=
      C_{n,s}
      \int_{B_r}
      \frac{\left|u(x_1)-u(x_1+y)-u(x_2)+u(x_2+y)\right|}{|y|^{n+2s}}e^{-\lambda|y|}\,dy,
      \\
      I_2
      &:=
      C_{n,s}
      \int_{\mathbb R^n\setminus B_r}
      \frac{\left|u(x_1)-u(x_1+y)-u(x_2)+u(x_2+y)\right|}{|y|^{n+2s}}e^{-\lambda|y|}\,dy.
      \end{align*}

      For $I_1$, we use that $|u(x_i)-u(x_i+y)|\le [u]_{C^{0,\alpha}}|y|^\alpha$ for $i=1,2$.
      Therefore,
      \[
      I_1
      \le
      C_{n,s}\int_{B_r}\frac{2[u]_{C^{0,\alpha}}|y|^\alpha}{|y|^{n+2s}}\,dy
      \le
      C\,[u]_{C^{0,\alpha}}\,r^{\alpha-2s}.
      \]

      For $I_2$, we use that $|u(x_1+y)-u(x_2+y)|\le [u]_{C^{0,\alpha}}|x_1-x_2|^\alpha$.
      Therefore,
      \[
      I_2
      \le
      C_{n,s}\int_{\mathbb R^n\setminus B_r}
      \frac{2[u]_{C^{0,\alpha}}|x_1-x_2|^\alpha}{|y|^{n+2s}}\,dy
      \le
      C\,[u]_{C^{0,\alpha}}\,r^{-2s}|x_1-x_2|^\alpha.
      \]

      Choosing $r=|x_1-x_2|$ and adding the bounds for $I_1$ and $I_2$, we obtain
      \[
      \bigl|(-\Delta)^s_\lambda u(x_1)-(-\Delta)^s_\lambda u(x_2)\bigr|
      \le
      C\,[u]_{C^{0,\alpha}}\,|x_1-x_2|^{\alpha-2s},
      \]
      which implies \eqref{eq:prop2.5-est} and hence $(-\Delta)^s_\lambda u\in C^{0,\alpha-2s}(\mathbb R^n)$.
  \end{proof}

\begin{lemma}\label{prop:2.6}
     Let $0<s<1$ and $u\in C^{1,\alpha}(\mathbb R^n)$ for some $\alpha\in(0,1]$.
      Then, for the tempered fractional Laplacian
      $(-\Delta)^s_\lambda$ with tempering parameter $\lambda>0$, 
      one has
      \begin{itemize}
          \item If $\alpha>2s$, then $(-\Delta)^s_\lambda u\in C^{1,\alpha-2s}(\mathbb R^n)$ and
      \begin{equation}\label{eq:prop2.6-case1}
        \bigl[(-\Delta)^s_\lambda u\bigr]_{C^{1,\alpha-2s}(\mathbb R^n)}
        \le
        C\,[u]_{C^{1,\alpha}(\mathbb R^n)},
      \end{equation}
      where $C$ depends only on $\alpha$, $s$, and $n$.

      \item If $2s-1<\alpha<2s$, then $(-\Delta)^s_\lambda u\in C^{0,\alpha-2s+1}(\mathbb R^n)$ and
      \begin{equation}\label{eq:prop2.6-case2}
      \bigl[(-\Delta)^s_\lambda u\bigr]_{C^{0,\alpha-2s+1}(\mathbb R^n)}
      \le
      C\,[u]_{C^{1,\alpha}(\mathbb R^n)},
      \end{equation}
      where $C$ depends only on $\alpha$, $s$, and $n$.
      \end{itemize}
\end{lemma}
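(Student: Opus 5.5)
The plan is to follow the difference-splitting argument of Lemma~\ref{prop:2.5}, with one extra ingredient in each case.

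\emph{Case $\alpha>2s$.} Here I would first show that $(-\Delta)^s_\lambda$ commutes with first derivatives,
\[
\partial_i(-\Delta)^s_\lambda u=(-\Delta)^s_\lambda(\partial_i u).
\]
For this, write the operator in the symmetric second-difference form of \eqref{eq:1.2} and differentiate the difference quotient under the integral. Domination is the point to check. For $|y|\le1$, the quantity $|\nabla u(x+\tau y)-\nabla u(x)|\le [u]_{C^{1,\alpha}}|y|^\alpha$ gives an integrand bounded by $C|y|^{\alpha-n-2s}$, which is integrable because $\alpha>2s$. For $|y|>1$, boundedness of $\nabla u$ gives $C|y|^{-n-2s}$. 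So dominated convergence applies, exactly as in the proof of Lemma~\ref{thm:Q-subcritical-holder}. Once this is done, apply Lemma~\ref{prop:2.5} to $\partial_i u\in C^{0,\alpha}$. It yields $[\partial_i(-\Delta)^s_\lambda u]_{C^{0,\alpha-2s}}\le C[\partial_i u]_{C^{0,\alpha}}$, which is \eqref{eq:prop2.6-case1}. The constant does not depend on $\lambda$, since $e^{-\lambda|y|}\le1$ was the only property used there.

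\emph{Case $2s-1<\alpha<2s$.} Here I would estimate $|(-\Delta)^s_\lambda u(x_1)-(-\Delta)^s_\lambda u(x_2)|$ directly, using the symmetric form
\[
\frac{c_{n,s}}2\int \frac{2u(x)-u(x+y)-u(x-y)}{|y|^{n+2s}}e^{-\lambda|y|}\,dy
\]
and splitting at $r=|x_1-x_2|$.

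For the near part $|y|<r$, the second difference satisfies
\[
|2u(x)-u(x+y)-u(x-y)|\le 2[u]_{C^{1,\alpha}}|y|^{1+\alpha}
\]
by the mean value theorem applied to $\nabla u$. This gives a contribution $C[u]_{C^{1,\alpha}}r^{1+\alpha-2s}$, which converges because $1+\alpha>2s$.

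For the far part $|y|\ge r$, set $w=u(\cdot+h)-u$ with $h=x_2-x_1$. The integrand difference is then the second difference of $w$ at $x_1$. This is the step I expect to need care, because one must trade regularity between the $h$- and $y$-increments. Writing
\[
w(x_1)-w(x_1+y)=\int_0^1 \bigl(\nabla u(x_1+th)-\nabla u(x_1+y+th)\bigr)\cdot h\,dt,
\]
and likewise for $-y$, the symmetric combination is bounded by
\[
|h|\cdot 2[u]_{C^{1,\alpha}}|y|^\alpha .
\]
This gives a far contribution $C|h|[u]_{C^{1,\alpha}}\int_{|y|>r}|y|^{\alpha-n-2s}\,dy$, which converges precisely because $\alpha<2s$ and equals $C|h|r^{\alpha-2s}$.

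With $r=|h|$, both contributions are $C[u]_{C^{1,\alpha}}|h|^{1+\alpha-2s}$. Since $1+\alpha-2s\in(0,1)$, this is \eqref{eq:prop2.6-case2}. The tempering factor is only bounded by $1$ throughout, so $C$ depends on $n,s,\alpha$ alone.
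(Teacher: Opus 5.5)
Your proof is correct and follows the paper's argument. The first case is the commutation identity plus Lemma~\ref{prop:2.5}, and you also supply the dominated-convergence justification that the paper only asserts. The second case is exactly the paper's second-difference splitting at $r=|x_1-x_2|$. The one difference is that the paper handles $s<\tfrac12$ separately with first-order differences. You correctly observe that the second-difference estimates need only $1+\alpha-2s>0$ and $\alpha<2s$, so they cover all $s$ at once.
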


  \begin{proof}
      The first part follows directly from Proposition~\ref{prop:2.5} together with the fact that
      $(-\Delta)^s_\lambda$ commutes with differentiation.

      For the second part, we assume $s<\tfrac12$ . Proceeding as in the proof of
      Proposition~\ref{prop:2.5}, we obtain
      \[
      \bigl|(-\Delta)^s_\lambda u(x_1)-(-\Delta)^s_\lambda u(x_2)\bigr|\le I_1+I_2,
      \]
      for the same $I_1$ and $I_2$ as before. To estimate $I_1$, we use that since $u\in C^{1,\alpha}$,
      \begin{align*}
      \bigl|u(x_1)-u(x_1+y)-u(x_2)+u(x_2+y)\bigr|
      &\le \bigl|(\nabla u(x_1)-\nabla u(x_2))\cdot y\bigr| + [u]_{C^{1,\alpha}}|y|^{1+\alpha}
      \\
      &\le [u]_{C^{1,\alpha}}\bigl(|y|\,|x_1-x_2|^{\alpha}+|y|^{1+\alpha}\bigr),
      \end{align*}
      and hence
      \[
      I_1\le C\,[u]_{C^{1,\alpha}}
      \Bigl(r^{1-2s}|x_1-x_2|^{\alpha}+r^{1+\alpha-2s}\Bigr).
      \]
      Furthermore, to estimate $I_2$,
      \begin{align*}
      \bigl|u(x_1)-u(x_2)-(u(x_1+y)-u(x_2+y))\bigr|
      &\le C\,[u]_{C^{1,\alpha}}\,|x_1-x_2|\,|y|^{\alpha}.
      \end{align*}
      and hence
      \[
      I_2
      \le
      C\,[u]_{C^{1,\alpha}}\,|x_1-x_2|\,r^{\alpha-2s}.
      \]

      Choosing $r=|x_1-x_2|$, we obtain
      \[
      \bigl|(-\Delta)^s_\lambda u(x_1)-(-\Delta)^s_\lambda u(x_2)\bigr|
      \le
      C\,[u]_{C^{1,\alpha}}\,|x_1-x_2|^{\alpha-2s+1},
      \]
      which implies \eqref{eq:prop2.6-case2} and hence $(-\Delta)^s_\lambda u\in C^{0,\alpha-2s+1}(\mathbb R^n)$.

      In the case $s\ge\tfrac12$, to overcome the divergence of the singular integral kernel at the origin, one must exploit the radial symmetry of the kernel,
      $
      {e^{-\lambda |y|}}/({|y|^{n+2s}})
      $ is even in y.
      and rewrite the operator in the principal value form involving the second-order
      difference:
      \[
      (-\Delta)^s_\lambda u(x)
      =
      -\frac{1}{2}c_{n,s}\int_{\mathbb{R}^n}
      \frac{u(x+y)+u(x-y)-2u(x)}{|y|^{n+2s}}e^{-\lambda |y|}\,dy.
      \]

      Denote the second-order difference operator by
      \[
      \delta^2 u(x,y)=u(x+y)+u(x-y)-2u(x).
      \]

      Then, for any $x_1,x_2\in\mathbb{R}^n$, one has
      \[
      \left|(-\Delta)^s_\lambda u(x_1)-(-\Delta)^s_\lambda u(x_2)\right|
      \le I_3+I_4,
      \]
      where
      \[
      I_3
      \le
      \frac{1}{2}C_{n,s}\int_{B_r}
      \frac{|\delta^2 u(x_1,y)|+|\delta^2 u(x_2,y)|}{|y|^{n+2s}}e^{-\lambda |y|}\,dy,
      \]
      and
      \[
      I_4
      \le
      \frac{1}{2}C_{n,s}\int_{\mathbb{R}^n\setminus B_r}
      \frac{|\delta^2 u(x_1,y)-\delta^2 u(x_2,y)|}{|y|^{n+2s}}e^{-\lambda |y|}\,dy.
      \]

      For $I_1$, by the fundamental theorem of calculus in several variables, the second-order
      difference $\delta^2 u(x,y)$ can be expanded in terms of the gradient as
      \[
      u(x+y)-u(x)=\int_0^1 \nabla u(x+ty)\cdot y\,dt,
      \]
      and
      \[
      u(x-y)-u(x)=\int_0^1 \nabla u(x-ty)\cdot(-y)\,dt
      =-\int_0^1 \nabla u(x-ty)\cdot y\,dt.
      \]
      Adding the above two identities and taking absolute values, we obtain
      \[
      |\delta^2 u(x,y)|
      \le
      \int_0^1 |\nabla u(x+ty)-\nabla u(x-ty)|\,|y|\,dt.
      \]
      Since $\nabla u\in C^{0,\alpha}(\mathbb{R}^n)$, it follows from the H\"older
      continuity of $\nabla u$ that
      \[
      |\delta^2 u(x,y)|
      \le
      \int_0^1 [u]_{C^{1,\alpha}(\mathbb{R}^n)}\,|2ty|^\alpha |y|\,dt
      =
      [u]_{C^{1,\alpha}(\mathbb{R}^n)}\,2^\alpha |y|^{1+\alpha}\int_0^1 t^\alpha\,dt,
      \]
      and hence
      \[
      |\delta^2 u(x,y)|
      \le
      C[u]_{C^{1,\alpha}(\mathbb{R}^n)}\,|y|^{1+\alpha}.
      \]

      Substituting the above bound into $I_3$, and using the elementary estimate
      $
      e^{-\lambda |y|}\le 1,
      $
      together with the polar coordinate formula
      \[
      dy=|\mathbb{S}^{n-1}|\,\rho^{n-1}d\rho,
      \]
      we obtain
      \[
      I_3
      \le
      C_{n,s}|\mathbb{S}^{n-1}|
      \frac{2^\alpha}{1+\alpha}
      [u]_{C^{1,\alpha}(\mathbb{R}^n)}
      \int_0^r
      \rho^{1+\alpha}\rho^{-n-2s}\rho^{n-1}\,d\rho.
      \]
      Since $\alpha>2s-1$, we have $\alpha-2s>-1$, and hence the above integral is finite.

      Consequently,
      \[
      I_3
      \le
      C [u]_{C^{1,\alpha}(\mathbb{R}^n)}\, r^{\alpha-2s+1}.
      \]

      For $I_4$, we estimate the increment of the second-order difference
      between two spatial points. For each fixed $y\in\mathbb{R}^n$, define
      \[
      h(x)=\delta^2u(x,y)=u(x+y)+u(x-y)-2u(x).
      \]
      Then, we have
      \[
      |\delta^2u(x_1,y)-\delta^2u(x_2,y)|
      =
      |h(x_1)-h(x_2)|
      \le
      \|\nabla_x h\|_{L^\infty(\mathbb{R}^n)}\,|x_1-x_2|.
      \]
      Moreover,
      \[
      \nabla_x h(x)
      =
      \nabla u(x+y)+\nabla u(x-y)-2\nabla u(x).
      \]

      Using the H\"older continuity of $\nabla u$, we obtain
      \[
      |\nabla_x h(x)|
      \le
      |\nabla u(x+y)-\nabla u(x)|
      +
      |\nabla u(x-y)-\nabla u(x)|
      \le
      2 [u]_{C^{1,\alpha}(\mathbb{R}^n)} |y|^\alpha.
      \]
      Therefore,
      \[
      |\delta^2u(x_1,y)-\delta^2u(x_2,y)|
      \le
      C [u]_{C^{1,\alpha}(\mathbb{R}^n)} |x_1-x_2|\,|y|^\alpha.
      \]

      Substituting this estimate into $I_4$, and using again that
      $
      e^{-\lambda|y|}\le 1,
      $
      we obtain
      \begin{align*}
      I_4
      &\le
      C [u]_{C^{1,\alpha}(\mathbb{R}^n)} |x_1-x_2|
      \int_{\mathbb{R}^n\setminus B_r}
      |y|^{\alpha-n-2s}\,dy
      \\&\le
      C [u]_{C^{1,\alpha}(\mathbb{R}^n)} |x_1-x_2|
      \int_r^\infty \rho^{\alpha-2s-1}\,d\rho.
      \end{align*}

      Since $\alpha<2s$, we have $\alpha-2s-1<-1$, and hence the above improper
      integral converges. More precisely,
      \[
      \int_r^\infty \rho^{\alpha-2s-1}\,d\rho
      =
      \frac{r^{\alpha-2s}}{2s-\alpha}.
      \]
      Consequently,
      \[
        I_4
      \le
        C [u]_{C^{1,\alpha}(\mathbb{R}^n)}|x_1-x_2|\, r^{\alpha-2s}.
      \]

      Choosing$r=|x_1-x_2|$,
      we arrive at
      \[
        I_4
      \le
        C [u]_{C^{1,\alpha}(\mathbb{R}^n)}|x_1-x_2|^{1+\alpha-2s}.
      \]

      Combining this with the estimate for $I_3$, we conclude that
      \[
        |(-\Delta)^s_\lambda u(x_1)-(-\Delta)^s_\lambda u(x_2)|
      \le
      C [u]_{C^{1,\alpha}(\mathbb{R}^n)}|x_1-x_2|^{1+\alpha-2s}.
      \]
      Hence,
      \[
      \bigl[(-\Delta)^s_\lambda u\bigr]_{C^{0,\alpha-2s+1}(\mathbb{R}^n)}
      \le
      C [u]_{C^{1,\alpha}(\mathbb{R}^n)},
      \]
      and
      \[
      (-\Delta)^s_\lambda u\in C^{0,\alpha-2s+1}(\mathbb{R}^n).
      \]
      This completes the proof.
  \end{proof}

Iterating the last two lemmas we get the following result:
\begin{lemma}
      Let $u\in C^{k,\alpha}(\mathbb{R}^n)$ and suppose that $k+\alpha-2s>0 $ is not an integer. Then
        \[
        (-\Delta)^s_\lambda u \in C^{l, l^{'}}(\mathbb{R}^n),
        \]
      where $l$ is the integer part of $k+\alpha-2s$ and $l^{'} = k+\alpha-2s-l.$
\end{lemma}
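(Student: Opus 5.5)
The plan is to reduce the statement to Lemmas~\ref{prop:2.5} and~\ref{prop:2.6} by commuting derivatives with $(-\Delta)^s_\lambda$. Set $\beta:=k+\alpha-2s$, with $l=\lfloor\beta\rfloor$ and $l'=\beta-l\in(0,1)$. Since $0<2s<2$, we always have $l\in\{k-2,k-1,k\}$.

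\textbf{Step 1: commutation with derivatives.} For $u\in C^{k,\alpha}(\mathbb R^n)$ and $|\mu|\le l$, I will show that
\[
D^\mu(-\Delta)^s_\lambda u=(-\Delta)^s_\lambda D^\mu u .
\]
I would argue as in the proof of Lemma~\ref{thm:Q-subcritical-holder}. Use the symmetric second-difference representation
\[
(-\Delta)^s_\lambda u(x)=\frac{c_{n,s}}{2}\int_{\mathbb R^n}\frac{2u(x)-u(x+y)-u(x-y)}{|y|^{n+2s}}e^{-\lambda|y|}\,dy,
\]
and differentiate under the integral sign. Domination comes from two bounds on the relevant derivative $D^{\mu}$ of order at most $k-1$ (or $k-2$, as appropriate): $|\delta^2 D^{\mu} u(x,y)|\lesssim |y|^{\min\{2,\,k-|\mu|+\alpha\}}$ near the origin, and boundedness at infinity. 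The integrand is then dominated by an $L^1$ function as long as $k-|\mu|+\alpha>2s$, and this holds because $|\mu|\le l<\beta$. The difference quotients are handled exactly as in~\eqref{step33}, via the fundamental theorem of calculus and dominated convergence.

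\textbf{Step 2: the top-order estimate.} Fix $|\mu|=l$ and set $w:=D^\mu u\in C^{k-l,\alpha}(\mathbb R^n)$. It remains to show that $(-\Delta)^s_\lambda w\in C^{0,l'}$, where $l'=(k-l)+\alpha-2s$. There are three cases according to the value of $k-l$.
\begin{itemize}
\item If $l=k$, then $\alpha>2s$ and $w\in C^{0,\alpha}$, so Lemma~\ref{prop:2.5} applies directly.
\item If $l=k-1$, then $w\in C^{1,\alpha}$ with $2s-1<\alpha<2s$. This is the second case of Lemma~\ref{prop:2.6}, which gives the exponent $\alpha-2s+1=l'$.
\item If $l=k-2$, then $w\in C^{2,\alpha}$ with $\alpha+2-2s<1$, i.e.\ $\alpha<2s-1$, which forces $s>\tfrac12$. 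Here I would rerun the proof of Lemma~\ref{prop:2.6}. The near part is controlled by $|\delta^2 w(x,y)|\le C|y|^{2}$ together with a Taylor expansion to second order, and the $x_1$-$x_2$ difference of second differences is bounded by $[w]_{C^{2,\alpha}}\min\{|y|^{2}|x_1-x_2|^\alpha,\,|x_1-x_2|\,|y|^{1+\alpha}\}$. Splitting at $r=|x_1-x_2|$ then yields the exponent $2+\alpha-2s$.
\end{itemize}
The lower-order derivatives $D^\mu(-\Delta)^s_\lambda u$ with $|\mu|<l$ are bounded. For that I would split the integral into $|y|\le1$ and $|y|>1$ and use $\|u\|_{C^{k,\alpha}}$; the tempering factor $e^{-\lambda|y|}\le1$ only helps throughout.

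\textbf{Main obstacle.} The delicate point is the case $l=k-2$, which no earlier lemma covers. There I must make sure that the far-field integral $\int_{|y|>r}|y|^{1+\alpha-n-2s}\,dy$ converges; it does, because $1+\alpha<2s$ in exactly this case. The bound on the near field must also be chosen correctly: I will use $|\delta^2 w(x_1,y)-\delta^2 w(x_2,y)|\lesssim|y|^{2}|x_1-x_2|^\alpha$, since the cruder bound $|y|^{2+\alpha}$ does not give the right power. With both bounds in place, the choice $r=|x_1-x_2|$ balances the two contributions at $|x_1-x_2|^{2+\alpha-2s}$.
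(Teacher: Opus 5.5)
Your proof is correct and follows the route the paper has in mind. You commute derivatives with $(-\Delta)^s_\lambda$, and your domination condition $k-|\mu|+\alpha>2s$, i.e.\ $|\mu|<k+\alpha-2s$, is exactly what is needed. You then apply Lemmas~\ref{prop:2.5} and~\ref{prop:2.6} to $D^\mu u$ at top order.

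The paper's only argument is the remark ``iterating the last two lemmas''. You are right that the case $l=k-2$ (so $k\ge2$, $s>\tfrac12$, $\alpha<2s-1$) is covered by neither lemma. For instance, applying Lemma~\ref{prop:2.6} to $w\in C^{1,1}$ only gives the exponent $2-2s<2+\alpha-2s$. Your separate estimate there therefore completes the paper's sketch rather than duplicating it.

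In that case, the bounds you settle on in your last paragraph are the right ones:
\[
|\delta^2w(x_1,y)-\delta^2w(x_2,y)|\lesssim\min\bigl\{|y|^2|x_1-x_2|^\alpha,\;|x_1-x_2|\,|y|^{1+\alpha}\bigr\},
\]
split at $r=|x_1-x_2|$. The estimate that fails is the separate bound $|\delta^2w(x_i,y)|\le C|y|^2$, which gives only $|x_1-x_2|^{2-2s}$. It is not a ``cruder'' $|y|^{2+\alpha}$ bound: that bound holds for the second-order Taylor remainder, not for $\delta^2w$ itself, so your remark there should be corrected.

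Finally, boundedness is needed for all $|\mu|\le l$, including $|\mu|=l$, not only $|\mu|<l$. It follows from the same domination as in your Step~1.
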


\begin{proposition}\label{RO2.2}
  {\cite[Proposition~2.2]{RosOtonSerra2014}}
    Let $n \ge 1$, $0 < s < 1$. 
    Assume that \(w \in C^\infty(\mathbb{R}^n)\) solves
    $(-\Delta)^s w = h\,\text{in } B_1$
    and that neither \(\beta\) nor \(\beta+2s\) is an integer. Then,
    \[
    \|w\|_{C^{\beta+2s}(\overline{B}_{1/2})}
    \le
    C\Bigl(
    \|w\|_{C^\beta(\mathbb{R}^n)}
    +
    \|h\|_{C^\beta(\overline{B}_1)}
    \Bigr),
    \]
    where \(C\) is a constant depending only on \(n\), \(s\), and \(\beta\).
\end{proposition}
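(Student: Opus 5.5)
We would prove this by splitting $w$ into a particular solution that carries the right-hand side and an $s$-harmonic remainder. The particular solution is controlled by Hölder mapping properties of the Riesz potential, and the remainder by interior estimates for $s$-harmonic functions coming from the Poisson kernel. Let $\beta=k+\beta'$ with $\beta'\in(0,1)$; by hypothesis $\beta'+2s\notin\{1,2\}$.

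\emph{Step 1 (particular solution).} Fix a cutoff $\eta\in C_c^\infty(B_1)$ with $\eta\equiv1$ on $B_{7/8}$. Extend $\eta h$ by zero, so that $\|\eta h\|_{C^\beta(\mathbb R^n)}\le C\|h\|_{C^\beta(\overline B_1)}$. Set $v:=\Gamma_s*(\eta h)$, where $\Gamma_s(x)=c|x|^{2s-n}$ is the fundamental solution of $(-\Delta)^s$. If $n\le 2s$, that is $n=1$ and $s\ge\frac12$, we replace $\Gamma_s$ by the logarithmic or normalized kernel $|x|^{2s-1}-|e_1|^{2s-1}$, and the argument below is unchanged on bounded sets. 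Then $(-\Delta)^s v=\eta h=h$ in $B_{7/8}$. The classical Hölder estimate for Riesz potentials gives
\[
\|v\|_{C^{\beta+2s}(\overline B_{3/4})}+\|v\|_{L^\infty(B_2)}\le C\|h\|_{C^\beta(\overline B_1)} .
\]
Here $\beta+2s\notin\mathbb Z$ is exactly what is needed. One first differentiates $k$ times under the convolution, since derivatives commute with convolution, and then applies the $C^{\beta'}\to C^{\beta'+2s}$ estimate. The tail of $v$ is controlled by the growth $|v(x)|\le C(1+|x|)^{2s-n}\|h\|_{L^\infty}$, or by a logarithm in the exceptional case.

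\emph{Step 2 ($s$-harmonic remainder).} The function $\phi:=w-v$ satisfies $(-\Delta)^s\phi=0$ in $B_{7/8}$. We truncate: let $\chi\in C_c^\infty(B_2)$ with $\chi\equiv1$ on $B_{3/2}$. Write $\phi=\chi\phi+(1-\chi)\phi$, and put the tail $(1-\chi)\phi$ into the right-hand side. For $x\in B_{7/8}$,
\[
(-\Delta)^s\bigl((1-\chi)\phi\bigr)(x)=-c_{n,s}\int_{|y|>3/2}\frac{(1-\chi)\phi(y)}{|x-y|^{n+2s}}\,dy .
\]
This is $C^\infty$ in $x$, with all derivatives bounded by $C\int|\phi(y)|(1+|y|)^{-n-2s}\,dy$. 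That integral is finite because $w$ is bounded and $v$ grows at most like $|y|^{2s-n}$ or a logarithm, which is integrable against $|y|^{-n-2s}$. Consequently $\chi\phi$ solves $(-\Delta)^s(\chi\phi)=g$ in $B_{7/8}$, with $g$ smooth and $\|g\|_{C^m(B_{7/8})}\le C(\|w\|_{L^\infty}+\|h\|_{C^\beta})$.

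We then represent $\chi\phi$ on $B_{3/4}$ through the Poisson kernel and the Green function of the ball $B_{3/4}$, both of which are explicit. Because $x\in B_{1/2}$ stays away from $\partial B_{3/4}$, these kernels are smooth in $x$. Differentiating under the integral gives
\[
\|\chi\phi\|_{C^{m}(\overline B_{1/2})}\le C\bigl(\|\phi\|_{L^\infty(B_2)}+\|g\|_{L^\infty}\bigr)\le C\bigl(\|w\|_{C^\beta(\mathbb R^n)}+\|h\|_{C^\beta(\overline B_1)}\bigr)
\]
for every $m$.

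\emph{Step 3 (conclusion).} On $B_{1/2}$ we have $w=v+\chi\phi$, and adding the bounds from Steps 1 and 2 gives the claim. Smoothness of $w$ is used only to justify the pointwise identities and differentiation under the integral sign.

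The main obstacle is Step 1: the sharp $C^{\beta+2s}$ mapping property of $\Gamma_s*$ on compactly supported $C^\beta$ data. This is where the non-integrality of $\beta$ and $\beta+2s$ enters. We would handle it with the standard splitting of $\Gamma_s*(\eta h)(x_1)-\Gamma_s*(\eta h)(x_2)$ into the regions $|y|<r$ and $|y|>r$ with $r=|x_1-x_2|$, exactly as in Lemma~\ref{prop:2.5}, now with the kernel $|y|^{2s-n}$. When $\beta'+2s>1$, we first take one more derivative of the kernel and argue in the same way for the gradient.
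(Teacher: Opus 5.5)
\textbf{The step that fails.} The error is in Step~2. The Poisson kernel $P_{3/4}(x,y)$, $|y|>3/4$, is indeed smooth in $x\in B_{1/2}$. The Green function $G_{3/4}(x,y)$ is not. Here $y$ runs over all of $B_{3/4}\ni x$, and near the diagonal $G_{3/4}(x,y)$ behaves like $\Gamma_s(x-y)$, which is not smooth at $x=y$ (it is singular when $n>2s$). So you cannot differentiate $\int_{B_{3/4}}G_{3/4}(x,y)g(y)\,dy$ arbitrarily often under the integral sign.

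Your bound of $\|\chi\phi\|_{C^m(\overline B_{1/2})}$ by $\|\phi\|_{L^\infty(B_2)}+\|g\|_{L^\infty}$ for every $m$ is therefore false in general: a Green potential of a merely bounded function gains only about $2s$ derivatives. The conclusion survives, because you have already shown $\|g\|_{C^m(B_{7/8})}\le C(\|w\|_{L^\infty}+\|h\|_{C^\beta(\overline B_1)})$ for all $m$. You can use this as follows.
\begin{itemize}
\item[(a)] Subtract $\Gamma_s*(\zeta g)$, where $\zeta\in C_c^\infty(B_{7/8})$ with $\zeta\equiv1$ on $B_{3/4}$. This term is handled as in Step~1, now with smooth data. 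The remainder is $s$-harmonic in $B_{3/4}$, so the Poisson kernel alone suffices for it.
\item[(b)] Simpler still, drop the truncation. The function $\phi$ is itself $s$-harmonic in $B_{7/8}$ with $\int|\phi|(1+|y|)^{-n-2s}\,dy<\infty$, so $\phi(x)=\int_{|y|>3/4}P_{3/4}(x,y)\phi(y)\,dy$ in $B_{3/4}$. For $x\in B_{1/2}$ one has $|D^m_xP_{3/4}(x,y)|\le C_m(|y|^2-\tfrac{9}{16})^{-s}(1+|y|)^{-n}$, which is integrable near $\partial B_{3/4}$ and decays like $|y|^{-n-2s}$ at infinity. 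This gives directly $\|\phi\|_{C^m(\overline B_{1/2})}\le C_m\bigl(\|\phi\|_{L^\infty(B_2)}+\int|\phi|(1+|y|)^{-n-2s}\,dy\bigr)$.
\end{itemize}
There are also two smaller points in your sketch of Step~1. First, the splitting of Lemma~\ref{prop:2.5} does not transfer verbatim, because $|y|^{2s-n}$ is not integrable at infinity. You must put the difference on the kernel, subtract $(\eta h)(x_1)$, and use the compact support of $\eta h$. Second, when $2s\le 1<\beta'+2s$, the kernel $\nabla\Gamma_s$ is not locally integrable, so the gradient has to be taken as a principal value using its oddness. Citing the classical Riesz-potential estimate covers both points.

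\textbf{Comparison with the paper.} With this repair your argument is correct, and it is a genuinely different route. The paper does not prove this proposition at all. It quotes it from Ros-Oton and Serra and uses it as a black box, on nested balls, in Lemma~\ref{tempered-RO2.2}. There the tempered operator is handled perturbatively: one writes $(-\Delta)^s v=F+\mathcal Q^s_\lambda v$, bounds $\mathcal Q^s_\lambda v$ by interpolation, and absorbs it by iteration.

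Your route is self-contained and proves a stronger estimate. In it, $\|w\|_{C^\beta(\mathbb R^n)}$ can be replaced by $\|w\|_{L^\infty(\mathbb R^n)}$, or even by $\|w\|_{L^\infty(B_2)}$ plus the weighted tail $\int|w|(1+|y|)^{-n-2s}$. Its cost is that it relies on objects available explicitly only for the homogeneous kernel, namely the Riesz fundamental solution and the Poisson kernel of the ball. Tempering destroys exactly these, which is why the paper takes the untempered estimate as an input rather than proving its tempered analogue along your lines.
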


\begin{lemma}\label{tempered-RO2.2}
    Let $n \ge 1$, $0 < s < 1$, $ \beta > 0$, and $\lambda > 0$.  
    Assume that \(v \in C^\infty(\mathbb{R}^n) \cap C^\beta(\mathbb{R}^n)  \) solves
    $(-\Delta)^s_\lambda v = F\,\text{in } B_1$
    and that neither \(\beta\) nor \(\beta+2s\) is an integer. Then,
      \[
        \|v\|_{C^{\beta+2s}(\overline{B}_{1/2})}
        \le
        C\Bigl(
        \|v\|_{C^\beta(\mathbb{R}^n)}+\|F\|_{C^\beta(\overline{B}_1)}
        \Bigr),
      \]
    where \(C\) is a constant depending only on \(n\), \(s\), \(\beta\), and $\lambda$.
\end{lemma}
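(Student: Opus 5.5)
The plan is to reduce Lemma~\ref{tempered-RO2.2} to Proposition~\ref{RO2.2} through the decomposition \eqref{Q=g-f}. Since \(v\) is smooth and bounded in \(C^\beta(\mathbb R^n)\), the pointwise formula for \(\mathcal Q^s_\lambda v\) converges absolutely. We can therefore write
\[
(-\Delta)^s v = F + \mathcal Q^s_\lambda v =: h \quad\text{in } B_1 .
\]
Proposition~\ref{RO2.2} then gives
\[
\|v\|_{C^{\beta+2s}(\overline B_{1/2})}\le C\bigl(\|v\|_{C^\beta(\mathbb R^n)}+\|F\|_{C^\beta(\overline B_1)}+\|\mathcal Q^s_\lambda v\|_{C^\beta(\overline B_1)}\bigr),
\]
so everything comes down to bounding \(\|\mathcal Q^s_\lambda v\|_{C^\beta(\overline B_1)}\) by \(\|v\|_{C^\beta(\mathbb R^n)}\) plus possibly a small multiple of a higher norm of \(v\).

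\textbf{Case \(0<s<\tfrac12\).} The kernel \(K^s_\lambda\) is integrable. Lemma~\ref{QCA}(i) handles \(\beta<1\), and Lemma~\ref{thm:Q-subcritical-holder} handles \(\beta>1\); the case \(\beta=1\) is excluded by hypothesis. Both give \(\|\mathcal Q^s_\lambda v\|_{C^\beta(\mathbb R^n)}\le C\|v\|_{C^\beta(\mathbb R^n)}\), and the estimate follows at once.

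\textbf{Case \(\tfrac12\le s<1\).} Here \(\mathcal Q^s_\lambda\) loses \(2s-1\) derivatives, or an arbitrarily small amount when \(s=\tfrac12\). Commuting derivatives as in Lemma~\ref{thm:Q-subcritical-holder} and applying Lemma~\ref{QCA}(ii),(iii) to \(D^k v\) only bounds \(\|\mathcal Q^s_\lambda v\|_{C^{\beta}}\) by \(\|v\|_{C^{\beta+2s-1+\epsilon}}\). This is a full order below \(\beta+2s\) but above \(\beta\). Two issues must be handled.

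First, the \(\mathbb R^n\)-norm of \(v\) is not localized. I will therefore split \(\mathcal Q^s_\lambda v\) into a near part and a far part. The near part restricts \(|z|\le\rho\) with \(\rho<1/4\), so on \(B_1\) it only sees \(v\) on a slightly larger ball. The far part has kernel \(\mathbf 1_{|z|>\rho}K^s_\lambda\in L^1\), and it is bounded in \(C^\beta\) by \(C_\rho\|v\|_{C^\beta(\mathbb R^n)}\) exactly as in Lemma~\ref{thm:Q-subcritical-holder}.

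Second, the near part involves a higher norm of \(v\). I will absorb it by interpolation, iterating over a family of nested balls. Concretely, set \(\gamma=\beta+2s-1+\epsilon<\beta+2s\), and work with the weighted seminorm \(\sup_{r\in(1/2,1)}(1-r)^{\beta+2s}\|v\|_{C^{\beta+2s}(B_r)}\), in the style of the classical iteration in \cite{14}. Applying Proposition~\ref{RO2.2} in rescaled form on balls \(B_{r'}\subset B_r\) produces a factor \((r-r')^{-\beta-2s}\). Interpolation gives
\[
\|v\|_{C^{\gamma}(B_r)}\le \eta\|v\|_{C^{\beta+2s}(B_r)}+C_\eta\|v\|_{C^\beta(\mathbb R^n)} .
\]
Choosing \(\eta\) small relative to \((r-r')^{\beta+2s}\) lets the standard absorption lemma close the estimate.

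\textbf{Main obstacle.} I expect the main difficulty to be the rescaling. Rescaling to unit balls changes \(\lambda\) to \(\lambda/r\) by part (iv) of the scaling proposition, so I must keep track that the constants of Lemma~\ref{QCA} stay uniform for \(\lambda'\in[\lambda,4\lambda]\). They do, because they depend monotonically on \(\lambda\) through \(1-e^{-\lambda|z|}\le\min\{\lambda|z|,1\}\). If \(\beta+2s-1+\epsilon\) lands on an integer, I will perturb \(\epsilon\). If the gap exceeds one derivative, I will repeat the argument finitely many times, bootstrapping from \(C^\beta\) upward.
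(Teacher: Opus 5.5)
Your overall route matches the paper's. For $0<s<\tfrac12$ you combine Lemma~\ref{QCA}(i) and Lemma~\ref{thm:Q-subcritical-holder} with Proposition~\ref{RO2.2}, exactly as the paper does. For $\tfrac12\le s<1$ the paper likewise splits $\mathcal Q^s_\lambda v$ into near and far parts and bounds the near part by $\|v\|_{C^{\beta+\theta}}$ on a larger ball, with $2s-1<\theta<1$ (your $\theta=2s-1+\epsilon$). It then interpolates between $C^\beta$ and $C^{\beta+2s}$ and absorbs by iterating over radii $r_k\uparrow\tfrac34$.

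\textbf{The fixed splitting radius fails.} You cannot take the split at a fixed $\rho<\tfrac14$. The near part on $B_r$ involves $v$ on $B_{r+\rho}$, so your inequality has the form $A(r')\lesssim (r-r')^{-N}\bigl(\eta A(r+\rho)+\cdots\bigr)$, and the outer radius always exceeds the inner one by more than $\rho$. Already your first application on $B_1$ needs $v$ on $B_{1+\rho}$. That lies outside the region where the equation, and hence any $C^{\beta+2s}$ bound, is available. In the nested scheme, only finitely many steps fit before the outer ball leaves $B_1$, which leaves an uncontrolled term. The absorption lemma needs the inequality for pairs of radii with arbitrarily small gap.

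The repair is what the paper does. Split at $d=R-r$ (or any fixed fraction of the gap), so that $x+z\in B_R$ whenever $x\in B_r$ and $|z|<d$. The far part then costs $2\|v\|_{C^\beta(\mathbb R^n)}\|\mathbf 1_{\{|z|\ge d\}}K^s_\lambda\|_{L^1}$. This is $\lesssim d^{1-2s}\|v\|_{C^\beta(\mathbb R^n)}$ for $s>\tfrac12$, or $(1+\log(1/d))\|v\|_{C^\beta(\mathbb R^n)}$ at $s=\tfrac12$. That is a polynomial loss, which the geometric iteration absorbs together with $C(\varepsilon)\le\varepsilon^{-q}$.

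\textbf{Smaller points.}
\begin{enumerate}
\item The $\lambda$-bookkeeping you single out as the main obstacle does not arise. Only the scale-invariant classical estimate is rescaled, while $\mathcal Q^s_\lambda v$ is estimated at the original scale. Blowing a ball of radius $r$ up to unit size would in any case give parameter $\lambda r$, not $\lambda/r$.
\item No bootstrapping is needed. The loss $2s-1+\epsilon$ is below one, so a single interpolation covers it.
\item The near-part bound itself needs care. Lemma~\ref{QCA} is global and stated only for H\"older exponents below $1$. The commutation argument of Lemma~\ref{thm:Q-subcritical-holder} relies on an integrable kernel. So applying them to $D^kv$ neither localizes nor covers the case $\sigma+\theta>1$, where $\sigma=\beta-\lfloor\beta\rfloor$. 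The paper handles both at once with the local increment bound $\|\delta_z v\|_{C^\beta(\overline B_r)}\le C|z|^\theta\|v\|_{C^{\beta+\theta}(\overline B_R)}$ for $|z|<R-r$. It proves this separately for $\sigma+\theta<1$ and $1<\sigma+\theta<2$, then integrates it against $|z|^{1-n-2s}$ over $|z|<d$.
\end{enumerate}
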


    \begin{proof}
      For  $0 < s < \tfrac12$, the stated estimate follows directly by combining equations~\eqref{Q=g-f} and \eqref{eqQ}, Lemma~\ref{QCA} and~\ref{thm:Q-subcritical-holder} with
        Proposition~\ref{RO2.2}.

        For $\tfrac12 \le s < 1$,
          % we write
          % $
          % \gamma:=2s-1.
          % $
          % Since \(\tfrac12 \le s<1\), 
          we have
          $
          0\le 2s-1<1.
          $
          Choose \(\theta\) such that
          $2s-1<\theta<1,$
          and, if necessary, slightly change \(\theta\) so that \(\beta+\theta\notin\mathbb N\).
          Then
          \[
          \beta<\beta+\theta<\beta+2s.
          \]

We use the following standard translation estimate in H\"older spaces. Write
\[
\beta=m+\sigma,
\qquad
m:=\lfloor \beta\rfloor,
\qquad
0<\sigma<1.
\]
Since \(\beta\notin\mathbb N\), such a decomposition is well-defined. We claim that
\begin{equation}\label{delta_z}
    \|\delta_z v\|_{C^\beta(\overline B_r)}
    \le
    C|z|^\theta
    \|v\|_{C^{\beta+\theta}(\overline B_R)},
    \qquad |z|<d:=R-r<1,
\end{equation}
where
$\delta_z v(x):=v(x)-v(x+z).$
Indeed, since \(x\in B_r\) and \(|z|<d=R-r\), one has
$x+z\in B_R.$
Thus all the quantities below are well-defined in \(\overline B_R\).

For every  \(\nu\) with \(|\nu|\le m\), since \(v\in C^\infty(\mathbb R^n)\), we have
$
D^\nu\delta_z v
=
\delta_z(D^\nu v).
$
We first consider the lower-order derivatives \(|\nu|<m\). 
By the mean value theorem,
\[
\begin{aligned}
\|D^\nu\delta_z v\|_{L^\infty(\overline B_r)}
&=
\|\delta_z(D^\nu v)\|_{L^\infty(\overline B_r)} \\
&\le
C|z|
\|v\|_{C^{\beta+\theta}(\overline B_R)}.
\end{aligned}
\]
Since \(|z|<1\) and \(0<\theta<1\), we have
$
|z|\le |z|^\theta.
$
Therefore,
\[
\|D^\nu\delta_z v\|_{L^\infty(\overline B_r)}
\le
C|z|^\theta
\|v\|_{C^{\beta+\theta}(\overline B_R)},
\qquad |\nu|<m.
\]

It remains to estimate the derivatives \(|\nu|=m\). Let
$
G:=D^\nu v.
$
Since
\[
v\in C^{\beta+\theta}(\overline B_R)
=
C^{m+\sigma+\theta}(\overline B_R),
\]
we have
$
G\in C^{\sigma+\theta}(\overline B_R).
$
We shall prove
\begin{equation}\label{top_delta_z}
    \|\delta_z G\|_{C^\sigma(\overline B_r)}
    \le
    C|z|^\theta
    \|G\|_{C^{\sigma+\theta}(\overline B_R)}.
\end{equation}

First, we estimate the \(L^\infty\)-part. If \(0<\sigma+\theta<1\), then
\[
|\delta_z G(x)|
=
|G(x)-G(x+z)|
\le
[G]_{C^{\sigma+\theta}(\overline B_R)}|z|^{\sigma+\theta}
\le
C|z|^\theta
\|G\|_{C^{\sigma+\theta}(\overline B_R)}.
\]
If \(1<\sigma+\theta<2\), then \(DG\in C^{\sigma+\theta-1}(\overline B_R)\), and hence
\[
|\delta_z G(x)|
\le
|z|\|DG\|_{L^\infty(\overline B_R)}
\le
C|z|^\theta
\|G\|_{C^{\sigma+\theta}(\overline B_R)},
\]
again because \(|z|<1\) and \(0<\theta<1\). Thus, in both cases,
    \begin{equation}\label{GLwuqiong}
        \|\delta_z G\|_{L^\infty(\overline B_r)}
        \le
        C|z|^\theta
        \|G\|_{C^{\sigma+\theta}(\overline B_R)}.
    \end{equation}

We now estimate the H\"older seminorm. Let \(x,y\in \overline B_r\), \(x\neq y\), 
First suppose that \(0<\sigma+\theta<1\). 

If \(|x-y|\le |z|\), then
\[
\begin{aligned}
|\delta_z G(x)-\delta_z G(y)|
&\le
|G(x)-G(y)|
+
|G(x+z)-G(y+z)| \\
&\le
2[G]_{C^{\sigma+\theta}(\overline B_R)}|x-y|^{\sigma+\theta} \\
&\le
C\|G\|_{C^{\sigma+\theta}(\overline B_R)}
|z|^\theta |x-y|^\sigma .
\end{aligned}
\]

If \(|x-y|>|z|\), then
\[
\begin{aligned}
|\delta_z G(x)-\delta_z G(y)|
&\le
|G(x)-G(x+z)|
+
|G(y)-G(y+z)| \\
&\le
2[G]_{C^{\sigma+\theta}(\overline B_R)}|z|^{\sigma+\theta} \\
&\le
C\|G\|_{C^{\sigma+\theta}(\overline B_R)}
|z|^\theta |x-y|^\sigma .
\end{aligned}
\]
Therefore, when \(0<\sigma+\theta<1\),
  \begin{equation}\label{GHolder1}
      [\delta_z G]_{C^\sigma(\overline B_r)}
      \le
      C|z|^\theta
      \|G\|_{C^{\sigma+\theta}(\overline B_R)}.
  \end{equation}

Now suppose that \(1<\sigma+\theta<2\). Put
$\kappa:=\sigma+\theta-1\in(0,1).$
Then $DG\in C^\kappa(\overline B_R).$

If \(|x-y|\le |z|\), then
\[
\begin{aligned}
\delta_z G(x)-\delta_z G(y)
&=G(x)-G(x+z)-G(y)+G(y+z) \\
&=
\int_0^1
\left[
DG(y+t(x-y))-DG(y+z+t(x-y))
\right]\cdot (x-y)\,dt .
\end{aligned}
\]
Hence
\[
\begin{aligned}
|\delta_z G(x)-\delta_z G(y)|
&\le
[DG]_{C^\kappa(\overline B_R)}
|x-y| |z|^\kappa \\
&=
[DG]_{C^\kappa(\overline B_R)}
|x-y| |z|^{\sigma+\theta-1}.
\end{aligned}
\]
Since \(|x-y|\le |z|\), we have
$
|x-y|^{1-\sigma}\le |z|^{1-\sigma}.
$

Therefore,
\begin{align*}
  |x-y| |z|^{\sigma+\theta-1}
&=
  |x-y|^\sigma |x-y|^{1-\sigma}|z|^{\sigma+\theta-1}\\
&\le
  |x-y|^\sigma |z|^{1-\sigma}|z|^{\sigma+\theta-1}\\
&=
  |z|^\theta|x-y|^\sigma.
\end{align*}

Thus
\[
|\delta_z G(x)-\delta_z G(y)|
\le
C\|G\|_{C^{\sigma+\theta}(\overline B_R)}
|z|^\theta|x-y|^\sigma .
\]

If \(|x-y|>|z|\), then
\[
\begin{aligned}
\delta_z G(x)-\delta_z G(y)
&=
-\int_0^1
\left[
DG(x+tz)-DG(y+tz)
\right]\cdot z\,dt .
\end{aligned}
\]
Hence
\[
\begin{aligned}
|\delta_z G(x)-\delta_z G(y)|
&\le
[DG]_{C^\kappa(\overline B_R)}
|z||x-y|^\kappa \\
&=
[DG]_{C^\kappa(\overline B_R)}
|z||x-y|^{\sigma+\theta-1}.
\end{aligned}
\]
Since \(|z|<|x-y|\), we have
$
|z|^{1-\theta}\le |x-y|^{1-\theta}.
$

Therefore,
\begin{align*}
  |z||x-y|^{\sigma+\theta-1}
&=
  |z|^\theta |z|^{1-\theta}|x-y|^{\sigma+\theta-1}\\
&\le
  |z|^\theta |x-y|^{1-\theta}|x-y|^{\sigma+\theta-1}\\
&=
  |z|^\theta|x-y|^\sigma.
\end{align*}

Thus
\[
|\delta_z G(x)-\delta_z G(y)|
\le
C\|G\|_{C^{\sigma+\theta}(\overline B_R)}
|z|^\theta|x-y|^\sigma .
\]
Consequently, when \(1<\sigma+\theta<2\),
  \begin{equation}\label{GHolder2}
      [\delta_z G]_{C^\sigma(\overline B_r)}
      \le
      C|z|^\theta
      \|G\|_{C^{\sigma+\theta}(\overline B_R)}.
  \end{equation}

Combining~\eqref{GLwuqiong}, \eqref{GHolder1}and~\eqref{GHolder2}, this gives
\[
\|\delta_z G\|_{C^\sigma(\overline B_r)}
\le
C|z|^\theta
\|G\|_{C^{\sigma+\theta}(\overline B_R)}.
\]
Since \(G=D^\nu v\), for every \(|\nu|=m\) we obtain
\[
\|D^\nu\delta_z v\|_{C^\sigma(\overline B_r)}
\le
C|z|^\theta
\|v\|_{C^{\beta+\theta}(\overline B_R)}.
\]

Combining this estimate for all \(|\nu|=m\) with  \(|\nu|<m\), we conclude that
\[
\|\delta_z v\|_{C^\beta(\overline B_r)}
\le
C|z|^\theta
\|v\|_{C^{\beta+\theta}(\overline B_R)}.
\]
This proves \eqref{delta_z}.

\color{black}
         Next we split
          $\mathcal{Q}^s_\lambda v=I_1+I_2,$
          where
          \[
          I_1(x):=
          c_{n,s}\int_{|z|<d}\delta_z v(x)
          \frac{1-e^{-\lambda |z|}}{|z|^{n+2s}}\,dz,
          \]
          and
          \[
          I_2(x):=
          c_{n,s}\int_{|z|\ge d}\delta_z v(x)
          \frac{1-e^{-\lambda |z|}}{|z|^{n+2s}}\,dz.
          \]

          Therefore, for $I_1$,
          \[
          \begin{aligned}
          \|I_1\|_{C^\beta(\overline B_r)}
          &\le
              C\int_{|z|<d}
              \|\delta_z v\|_{C^\beta(\overline B_r)}
              |z|^{-n-2s+1}\,dz  \\
          &\le
              C\|v\|_{C^{\beta+\theta}(\overline B_R)}
              \int_{|z|<d}|z|^{\theta-n-2s+1}\,dz  \\
          &=
              C\|v\|_{C^{\beta+\theta}(\overline B_R)}
              \int_0^d \rho^{\theta-2s}\,d\rho.\\
          &\le
             C\|v\|_{C^{\beta+\theta}(\overline B_R)}
          \end{aligned}
          \]
  
          For \(I_2\), the kernel is integrable on \({|z|\ge d}\), and
          $
          \|\delta_z v\|_{C^\beta(\overline B_r)}
          \le
          C\|v\|_{C^\beta(\mathbb R^n)},
          $
          we have 
          \[
          \|I_2\|_{C^\beta(\overline B_r)}
          \le
          C d^{-N}
          \|v\|_{C^\beta(\mathbb R^n)}
          \]
          for any \(N \ge 2 > 2s\). 
          
          Consequently,
          \[
          \|\mathcal{Q}^s_\lambda v\|_{C^\beta(\overline B_r)}
          \le
          C\|v\|_{C^{\beta+\theta}(\overline B_R)}
          +
          C d^{-N}\|v\|_{C^\beta(\mathbb R^n)}.
          \]

          \color{black}
          Using the interpolation inequality
          \[
          \|v\|_{C^{\beta+\theta}(\overline B_R)}
          \le
          \varepsilon
          \|v\|_{C^{\beta+2s}(\overline B_R)}
          +
          C(\varepsilon)
          \|v\|_{C^\beta(\overline B_R)},
          \]
          we obtain
          \begin{equation}\label{qepsilon}
            \|\mathcal{Q}^s_\lambda v\|_{C^\beta(\overline B_r)}
              \le
              C\varepsilon
              \|v\|_{C^{\beta+2s}(\overline B_R)}
              +
              C_\varepsilon d^{-N}
              \|v\|_{C^\beta(\mathbb R^n)}.
          \end{equation}
          with  
          $C(\varepsilon)=\eta\left(\frac{1}{1-\eta}\right)^{\frac{\eta-1}{\eta}}
          \left({\varepsilon}\right)^{\frac{-\theta}{2s-\theta}}, 
          \eta=\frac{2s-\theta}{2s},  2s-1<\theta<1$.
          Since \(0<\eta<1\), then
          \[
          \eta\left(\frac{1}{1-\eta}\right)^{\frac{\eta-1}{\eta}}
          =
          \eta\left(\frac{1}{1-\eta}\right)^{-\frac{1-\eta}{\eta}}
          =
          \eta(1-\eta)^{\frac{1-\eta}{\eta}}.
          \]
          Since
          \[
          0<1-\eta<1
          \quad\text{and}\quad
          \frac{1-\eta}{\eta}>0,
          \]
          we have
          \[
          0<(1-\eta)^{\frac{1-\eta}{\eta}}\le 1.
          \]
          Therefore,
          \[
          0<
          \eta(1-\eta)^{\frac{1-\eta}{\eta}}
          \le
          \eta
          \le 1.
          \]
          Hence
          \[
          \eta\left(\frac{1}{1-\eta}\right)^{\frac{\eta-1}{\eta}}
          \le 1,
          \qquad 0<\eta<1.
          \]
          In particular, the quantity is uniformly bounded on \((0,1)\). 
          So we write $C(\varepsilon) \le \varepsilon^{-q}, q=\frac{\theta}{2s-\theta}$.
          
          Now, since
          \[
          (-\Delta)^s v
          =
          (-\Delta)^s_\lambda v+\mathcal{Q}^s_\lambda v,
          \]
          and since
          \[
          (-\Delta)^s_\lambda v=F
          \qquad\text{in }B_1,
          \]
          we have
          \begin{equation}\label{1234}
             (-\Delta)^s v=F+\mathcal{Q}^s_\lambda v
              \qquad\text{in }B_1.
          \end{equation}

          Let
          \[
          A(\rho):=
          \|v\|_{C^{\beta+2s}(\overline B_\rho)},
          \]
          and
          \[
          M:=
          \|v\|_{C^\beta(\mathbb R^n)}
          +
          \|F\|_{C^\beta(\overline B_1)}.
          \]

          For every $1/2\le r<R<1$, follow the proof of Proposition~\ref{RO2.2} and use \eqref{1234} , 
          one has
          \begin{align}\label{HolderrR}
          \|v\|_{C^{\beta+2s}(\overline B_r)}
              &\le
              C\Bigl(
              \|v\|_{C^\beta(\mathbb{R}^n)}+\|F+\mathcal{Q}^s_\lambda v\|_{C^\beta(\overline{B}_{(r+R)/2})}
              \Bigr)\\
                &\le
               C
              \left(
             M+ \|\mathcal{Q}^s_\lambda v\|_{C^\beta(\overline B_{(r+R)/2})}
          \right).
          \end{align}
          Applying the lower-order perturbation estimate to
          $
          B_{(r+R)/2} \subset B_R,
          $
          by~\eqref{qepsilon} and let $r=(r+R)/2 $, we get
          \begin{equation}\label{R+r}
            \|\mathcal{Q}^s_\lambda v\|_{C^\beta(\overline B_{(r+R)/2})}
          \le
          C\varepsilon \|v\|_{C^{\beta+2s}(\overline B_R)}
          +
          \varepsilon^{-q} (R-r)^{-N}
          \|v\|_{C^\beta(\mathbb R^n)}.
          \end{equation}
          %   \[
          % \|\mathcal{Q}^s_\lambda v\|_{C^\beta(\overline B_{(r+R)/2})}
          % \le
          % \varepsilon \|v\|_{C^{\beta+2s}(\overline B_R)}
          % +
          % C_\varepsilon (R-r)^{-N}
          % \|v\|_{C^\beta(\mathbb R^n)}.
          % \]

          Hence by \eqref{HolderrR} and~\eqref{R+r}, we get
          \[
          \|v\|_{C^{\beta+2s}(\overline B_r)}
          \le
          C(R-r)^{-N}M
          +
          C(R-r)^{-N}\varepsilon \|v\|_{C^{\beta+2s}(\overline B_R)}
          +
          C\varepsilon^{-q} (R-r)^{-N}M.
          \]

Now assume that, for \(1/2\le r<R<1\) and fix $ N = N_{0}$, we have obtained

\begin{eqnarray}\label{gujiC}
  \|v\|_{C^{\beta+2s}(\overline B_r)}
\le
C_0(R-r)^{-N_0}M
+
C_1(R-r)^{-N_0}\varepsilon \|v\|_{C^{\beta+2s}(\overline B_R)}
+
C_2\varepsilon^{-q}(R-r)^{-N_0}M,
\end{eqnarray}
where \(q>0\), \(N_0>0\), and \(C_0,C_1,C_2\) are independent of
\(r,R,\varepsilon,v,F\).

For a fixed number \(\delta\in(0,1)\), choose
\[
\varepsilon:=\frac{\delta}{2C_1}d^{N_0},
\]
then
\begin{equation}\label{C_1}
  C_1d^{-N_0}\varepsilon
=
    \frac{\delta}{2}
    \le \delta.
\end{equation}  
Moreover,
\[
\varepsilon^{-q}
=
\left(\frac{2C_1}{\delta}\right)^q
d^{-qN_0}.
\]
Hence
\begin{equation}\label{C2}
  C_2\varepsilon^{-q}d^{-N_0}M
=
C_2\left(\frac{2C_1}{\delta}\right)^q
d^{-(1+q)N_0}M.
\end{equation}

Since \(0<d<1\), we also have
\begin{equation}\label{C_0}
  C_0d^{-N_0}M
\le
  C_0d^{-(1+q)N_0}M.
\end{equation}

Therefore, by~\eqref{gujiC},~\eqref{C_1}, \eqref{C2} and \eqref{C_0}, we get

\begin{equation}\label{Czong}
  \|v\|_{C^{\beta+2s}(\overline B_r)}
\le
\delta \|v\|_{C^{\beta+2s}(\overline B_R)}
+
C_\delta d^{-\widetilde N}M,
\end{equation}
where
$
\widetilde N:=(1+q)N_0
$
and
$
C_\delta
:=
C_0
+
C_2\left(\frac{2C_1}{\delta}\right)^q.
$
Thus,
\[
\|v\|_{C^{\beta+2s}(\overline B_r)}
\le
\delta \|v\|_{C^{\beta+2s}(\overline B_R)}
+
C_\delta (R-r)^{-\widetilde N}M,
\qquad
\frac12\le r<R<1.
\]

Notice that \(C_\delta\) may blow up as \(\delta\downarrow0\). This is not a problem, because we do not let
\(\delta\) tend to zero. We now fix one positive value of \(\delta\). Choose
\[
\delta_0:=2^{-(\widetilde N+1)}.
\]
Then
\[
\delta_0 2^{\widetilde N}
=
\frac12<1.
\]
For this fixed \(\delta_0>0\), the constant
\[
C_{\delta_0}
=
C_0
+
C_2\left(\frac{2C_1}{\delta_0}\right)^q
\]
is finite. Hence by~\eqref{Czong}, we have
\[
\|v\|_{C^{\beta+2s}(\overline B_r)}
\le
\delta_0 \|v\|_{C^{\beta+2s}(\overline B_R)}
+
C_{\delta_0}(R-r)^{-\widetilde N}M,
\qquad
\frac12\le r<R<1.
\]

Now define
\[
r_k:=\frac34-\frac14\,2^{-k},
\qquad k=0,1,2,\dots.
\]
Then
\[
r_0=\frac12,\qquad r_k\uparrow\frac34,
\]
and
\[
r_{k+1}-r_k=2^{-k-3}.
\]
Applying the preceding estimate with \(r=r_k\) and \(R=r_{k+1}\), we get
\[
A(r_k)
\le
\delta_0 A(r_{k+1})
+
C_{\delta_0}(r_{k+1}-r_k)^{-\widetilde N}M.
\]
Since
\[
(r_{k+1}-r_k)^{-\widetilde N}
=
2^{\widetilde N(k+3)}
=
2^{3\widetilde N}2^{\widetilde N k},
\]
we obtain
\[
A(r_k)
\le
\delta_0 A(r_{k+1})
+
C_{\delta_0}2^{3\widetilde N}2^{\widetilde N k}M.
\]

Iterating this inequality gives
\[
A(r_0)
\le
\delta_0^m A(r_m)
+
C_{\delta_0}2^{3\widetilde N}M
\sum_{k=0}^{m-1}
\delta_0^k2^{\widetilde N k}.
\]
Since
\[
\delta_0 2^{\widetilde N}
=
\frac12,
\]
the series
\[
\sum_{k=0}^{\infty}
\delta_0^k2^{\widetilde N k}
=
\sum_{k=0}^{\infty}
\left(\delta_0 2^{\widetilde N}\right)^k
\]
converges.

Moreover, since \(v\in C^\infty(\mathbb R^n)\), we have
\[
A(3/4)<\infty.
\]
Because \(r_m<3/4\), it follows that
\[
\|v\|_{C^{\beta+2s}(\overline B_{r_m})}\le \|v\|_{C^{\beta+2s}(\overline B_{3/4})}.
\]
Therefore,
\[
\delta_0^m \|v\|_{C^{\beta+2s}(\overline B_{r_m})}
\le
\delta_0^m \|v\|_{C^{\beta+2s}(\overline B_{3/4})}
\to0
\qquad\text{as }m\to\infty.
\]
Letting \(m\to\infty\), we obtain
\[
A(r_0)
\le
C_{\delta_0}2^{3\widetilde N}M
\sum_{k=0}^{\infty}
\left(\delta_0 2^{\widetilde N}\right)^k.
\]
Since \(r_0=1/2\), this gives
\[
\|v\|_{C^{\beta+2s}(\overline B_{1/2})}
\le 
C\left(
\|v\|_{C^\beta(\mathbb R^n)}
+
\|F\|_{C^\beta(\overline B_1)}
\right),
\]
where
\[
C:=
C_{\delta_0}2^{3\widetilde N}
\sum_{k=0}^{\infty}
\left(\delta_0 2^{\widetilde N}\right)^k
<\infty.
\]
% {
% \color{blue}

% For this fixed value of \(\delta_0\), the constant \(C_{\delta_0}\) is finite.
% Indeed, from the previous construction,
% \[
% C_{\delta_0}
% =
% C_0+
% C_2\left(\frac{2C_1}{\delta_0}\right)^q.
% \]
% where \(C_0,C_1,C_2,q\) are fixed positive constants. 

% Since \(\delta_0>0\), this gives
% $
% C_{\delta_0}<\infty.
% $
% More explicitly, because
% $
% \delta_0^{-1}=2^{\widetilde N+1},
% $
% we have
% \[
% C_{\delta_0}
% =
% C_0+
% C_2(2C_1)^q2^{q(\widetilde N+1)}
% <\infty.
% \]

% Moreover, since \(\widetilde N<\infty\),
% $
% 2^{3\widetilde N}<\infty.
% $, we have $C_{\delta_0}2^{3\widetilde N}<\infty.$

% Furthermore,
% \[
% \sum_{k=0}^{\infty}
% \left(\delta_0 2^{\widetilde N}\right)^k
% =
% \sum_{k=0}^{\infty}
% \left(\frac12\right)^k
% =
% 2.
% \]
% Consequently,
% \[
% C:=
% C_{\delta_0}2^{3\widetilde N}
% \sum_{k=0}^{\infty}
% \left(\delta_0 2^{\widetilde N}\right)^k
% =
% 2C_{\delta_0}2^{3\widetilde N}
% <\infty.
% \]
% }
    \end{proof}

  \begin{lemma}\label{tempered2.4}
    Let \(n \ge 1\), \(0 < s < 1\), \( \beta > 0 \), and \(\lambda > 0\). 
    Assume that \(w \in C^\infty(\mathbb{R}^n)\) solves
    $
    (-\Delta)^s_{\lambda} w = h \, \text{in } B_2,
    $
    and that \(\beta\notin\mathbb N, \beta + 2s \notin \mathbb{N}\). Then,
    
    \[
    \|w\|_{C^{\beta+2s}(\overline{B}_{1/2})}
    \le
    C\left(
    \left\|(1+|x|)^{-n-2s} w(x)\right\|_{L^1(\mathbb{R}^n)}
    +
    \|w\|_{C^\beta(\overline{B}_2)}
    +
    \|h\|_{C^\beta(\overline{B}_2)}
    \right),
    \]
    where \(C\) depends only on \(n\), \(s\), \(\beta\), and \(\lambda\).
  \end{lemma}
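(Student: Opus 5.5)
The plan is to reduce Lemma~\ref{tempered2.4} to Lemma~\ref{tempered-RO2.2} by a cutoff argument, following the classical proof of the analogous statement in \cite{RosOtonSerra2014} (their Corollary~2.4). Fix a cutoff $\eta\in C_c^\infty(B_2)$ with $0\le\eta\le1$ and $\eta\equiv1$ on $B_{3/2}$, and split $w=\tilde w+(1-\eta)w$ with $\tilde w:=\eta w$. Then $\tilde w\in C^\infty(\mathbb R^n)\cap C^\beta(\mathbb R^n)$, and
\[
\|\tilde w\|_{C^\beta(\mathbb R^n)}\le C\|w\|_{C^\beta(\overline B_2)} .
\]
In $B_1$ we have
\[
(-\Delta)^s_\lambda\tilde w=h-(-\Delta)^s_\lambda\bigl((1-\eta)w\bigr)=:h+\tilde h .
\]
Applying Lemma~\ref{tempered-RO2.2} to $\tilde w$ on $B_1$ gives
\[
\|w\|_{C^{\beta+2s}(\overline B_{1/2})}=\|\tilde w\|_{C^{\beta+2s}(\overline B_{1/2})}\le C\bigl(\|w\|_{C^\beta(\overline B_2)}+\|h\|_{C^\beta(\overline B_1)}+\|\tilde h\|_{C^\beta(\overline B_1)}\bigr),
\]
so everything reduces to bounding $\tilde h$.

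The key step is the estimate of $\tilde h$. Since $(1-\eta)w$ vanishes on $B_{3/2}$, for $x\in B_1$ the principal value disappears and
\[
\tilde h(x)=c_{n,s}\int_{\mathbb R^n\setminus B_{3/2}}\frac{(1-\eta(y))w(y)}{|x-y|^{n+2s}}e^{-\lambda|x-y|}\,dy .
\]
Here $|x-y|\ge\frac12$ and $|x-y|\simeq1+|y|$. The kernel $K(z)=|z|^{-n-2s}e^{-\lambda|z|}$ is smooth on $\{|z|\ge 1/2\}$. Moreover, for every multi-index $\nu$ there is the bound
\[
|D^\nu K(z)|\le C_{\nu,\lambda}|z|^{-n-2s}
\]
for $|z|\ge1/2$, because each derivative of $e^{-\lambda|z|}$ produces factors $\lambda^j$ times powers $|z|^{-i}$ that are bounded there (the exponential decay only helps). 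Differentiating under the integral in $x$ and using dominated convergence then gives, for every $k$,
\[
\|\tilde h\|_{C^k(\overline B_1)}\le C_k\int_{\mathbb R^n}\frac{|w(y)|}{(1+|y|)^{n+2s}}\,dy .
\]
Taking $k=\lceil\beta\rceil$ yields the required $C^\beta$ bound. The constant depends on $\lambda$ only through these derivative bounds, and it stays bounded for $\lambda$ in compact sets, so the dependence on $n,s,\beta,\lambda$ is as stated.

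I expect the main obstacle to be checking that Lemma~\ref{tempered-RO2.2} applies on $B_1$ with the right-hand side $h+\tilde h$ and the global norm $\|\tilde w\|_{C^\beta(\mathbb R^n)}$ rather than $\|w\|_{C^\beta(\mathbb R^n)}$, which may be infinite. This is exactly why the cutoff is needed: the estimates in the proof of Lemma~\ref{tempered-RO2.2} use $v$ only through its global $C^\beta$ norm, in the tail piece $I_2$ of $\mathcal Q^s_\lambda v$, and that is finite for $\tilde w$. One must also make sure the weighted $L^1$ term controls $w$ outside $B_2$, and that the lower-order derivative bound on $K$ is uniform for $|z|\ge 1/2$; both are routine once the support separation $\operatorname{dist}(B_1,\mathbb R^n\setminus B_{3/2})=1/2$ is in place.
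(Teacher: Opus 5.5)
Your proposal is correct and follows the paper's proof. Both use a cutoff $\eta$ supported in $B_2$ with $\eta\equiv1$ on $B_{3/2}$, write the tail $(-\Delta)^s_\lambda\bigl((1-\eta)w\bigr)$ on $B_1$ as a nonsingular integral with $|x-y|\ge\frac12$, bound the kernel derivatives by $(1+|y|)^{-n-2s}$, and apply Lemma~\ref{tempered-RO2.2} to $\eta w$. The only cosmetic difference is how the tail is bounded: you bound its full $C^{\lceil\beta\rceil}$ norm, while the paper bounds derivatives up to order $\lfloor\beta\rfloor$ and gets the H\"older seminorm from the mean value theorem, which is the same estimate.
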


    \begin{proof}
        Let \(\eta \in C_c^\infty(\mathbb{R}^n)\) be such that
        \[
        \eta \equiv 0 \quad \text{in } \mathbb{R}^n \setminus B_2,
        \qquad
        \eta \equiv 1 \quad \text{in } B_{3/2},
        \qquad
        0 \le \eta \le 1.
        \]

        Then $ \widetilde{w} := \eta w\in C^\infty(\mathbb{R}^n)$ and \(\widetilde{w} = w\) in \(B_{3/2}\). 
        Hence,  in \(B_1\), we have
        \[
        (-\Delta)^s_\lambda \widetilde{w}=
        \widetilde{h}:=
        (-\Delta)^s_\lambda w - (-\Delta)^s_\lambda\bigl(w(1-\eta)\bigr)=
        h - (-\Delta)^s_\lambda\bigl(w(1-\eta)\bigr).
        \]
          Since \(\eta\) is smooth and supported in \(B_2\), we clearly have
        \[
        \|\widetilde{w}\|_{C^\beta(\mathbb{R}^n)}
        \le
        C \|w\|_{C^\beta(\overline{B}_2)}.
        \]
        Note that for \(x \in B_{3/2}\), we have
        \[
        (-\Delta)^s_\lambda\bigl(w(1-\eta)\bigr)(x)
        =
        c_{n,s}\int_{\mathbb{R}^n}
        \frac{-\bigl(w(1-\eta)\bigr)(y)}{|x-y|^{n+2s}}e^{-\lambda|x-y|}\,dy.
        \]

        Since \(\operatorname{supp}(1-\eta)\subset \mathbb{R}^n\setminus B_{3/2}\), when \(x\in B_1\) we have
        $|x-y|\ge \tfrac12.$
        Therefore, 
        \[
        \left|(-\Delta)^s_\lambda\bigl(w(1-\eta)\bigr)(x)\right|
        \le
        C\int_{\mathbb{R}^n\setminus B_{3/2}}
        \frac{|w(y)|}{|x-y|^{n+2s}}\,dy.
        \]

        For \(x\in B_1\) and \(y\in \mathbb{R}^n\setminus B_{3/2}\), we have
        \begin{align*}
              \left|(-\Delta)^s_\lambda\bigl(w(1-\eta)\bigr)(x)\right|
          &\le
              C\int_{\mathbb{R}^n\setminus B_{3/2}}
              \frac{|w(y)|}{(1+|y|)^{n+2s}}\,dy\\
          &\le
              C\int_{\mathbb{R}^n}
              \frac{|w(y)|}{(1+|y|)^{n+2s}}\,dy.
      \end{align*}
        Thus
        \[
          \left\|(-\Delta)^s_\lambda\bigl(w(1-\eta)\bigr)\right\|_{L^\infty(B_1)}
          \le
          C\left\|(1+|y|)^{-n-2s}w(y)\right\|_{L^1(\mathbb{R}^n)}.
        \]
        % Then take \(x,x' \in B_1\).
        % Since \(\eta \equiv 1\) in \(B_{3/2}\), it follows that \(w(1-\eta) \equiv 0\) in \(B_{3/2}\). Therefore, for any \(x \in B_1\), we have
        % \[
        % (-\Delta)^s_\lambda\bigl(w(1-\eta)\bigr)(x)
        % =
        % -c_{n,s}\int_{\mathbb{R}^n\setminus B_{3/2}}
        % \frac{w(y)(1-\eta(y))}{|x-y|^{n+2s}}e^{-\lambda|x-y|}\,dy.
        % \]

        % Hence, for \(x,x' \in B_1\), we may write
        % \[
        % (-\Delta)^s_\lambda\bigl(w(1-\eta)\bigr)(x)-(-\Delta)^s_\lambda\bigl(w(1-\eta)\bigr)(x')
        % \]
        % \[
        % =
        % -c_{n,s}\int_{\mathbb{R}^n\setminus B_{3/2}}
        % w(y)(1-\eta(y))
        % \left(
        % \frac{e^{-\lambda|x-y|}}{|x-y|^{n+2s}}
        % -
        % \frac{e^{-\lambda|x'-y|}}{|x'-y|^{n+2s}}
        % \right)\,dy.
        % \]

        % Therefore,
        % \[
        % \left|(-\Delta)^s_\lambda\bigl(w(1-\eta)\bigr)(x)-(-\Delta)^s_\lambda\bigl(w(1-\eta)\bigr)(x')\right|
        % \le
        % c_{n,s}\int_{\mathbb{R}^n\setminus B_{3/2}}
        % |w(y)|
        % \left|
        % \frac{e^{-\lambda|x-y|}}{|x-y|^{n+2s}}
        % -
        % \frac{e^{-\lambda|x'-y|}}{|x'-y|^{n+2s}}
        % \right|\,dy.
        % \]
        Next we shall prove
          \[
            \left\|
            (-\Delta)^s_\lambda\bigl(w(1-\eta)\bigr)
            \right\|_{C^\beta(\overline B_1)}
        \le
            C\left\|(1+|y|)^{-n-2s}w(y)\right\|_{L^1(\mathbb R^n)}.
          \]

        Write
        $\beta=m+\theta$, $m\in\mathbb N_0$, $0<\theta<1,$
        thus $C^\beta=C^{m,\theta}.$ First, let \(\mu\) be a multi-index with \(|\mu|\le m\). Since
        \(x\in B_1\) and \(y\in\mathbb R^n\setminus B_{3/2}\), we have
        $|x-y|\ge |y|-|x|\ge \frac32-1=\frac12.$
        
        Therefore, no singularity of the kernel is encountered in the region of
        integration. Consequently, we can obtain
        \[
          D_x^\mu\left[(-\Delta)^s_\lambda\bigl(w(1-\eta)\bigr)(x)\right]
        =
          -c_{n,s}
          \int_{\mathbb R^n\setminus B_{3/2}}
          w(y)(1-\eta(y))D_x^\mu (\frac{e^{-\lambda|x-y|}}{|x-y|^{n+2s}})\,dy .
        \]

        For every multi-index \(\mu\), there exists \(C_\mu>0\) such that      
        \begin{equation}
            |D_x^\mu (\frac{e^{-\lambda|x-y|}}{|x-y|^{n+2s}})|
          \le
            C_\mu (1+|x-y|)^{-n-2s},
            \qquad |x-y|\ge \frac12.
        \end{equation}

        Since \(x\in B_1\), we also have
        \[
            1+|y|
        \le
            1+|x-y|+|x|
        \le
            2+|x-y|
        \le
            2(1+|x-y|).
        \]
        Therefore,
        \begin{equation}\label{daoshu}
          |D_x^\mu (\frac{e^{-\lambda|x-y|}}{|x-y|^{n+2s}})|
        \le
            C_\mu(1+|x-y|)^{-n-2s}
        \le
            C_\mu(1+|y|)^{-n-2s}.
        \end{equation}
        It follows that
          \[
            \begin{aligned}
            |D_x^\mu\left[(-\Delta)^s_\lambda\bigl(w(1-\eta)\bigr)(x)\right]|
            &\le
            C
            \int_{\mathbb R^n\setminus B_{3/2}}
            |w(y)|(1-\eta(y))(1+|y|)^{-n-2s}\,dy \\
            &\le
            C
            \int_{\mathbb R^n}
            |w(y)|(1+|y|)^{-n-2s}\,dy .
            \end{aligned}
          \]
        Taking the supremum over \(x\in B_1\), we obtain that for all \(|\mu|\le m\),
        \begin{equation}\label{wuqiong}
        \|D^\mu (-\Delta)^s_\lambda\bigl(w(1-\eta)\bigr)\|_{L^\infty(B_1)}
        \le
        C
        \left\|(1+|y|)^{-n-2s}w(y)\right\|_{L^1(\mathbb R^n)}.
        \end{equation}

        It remains to estimate the H\"older seminorm of the derivatives of order \(m\).
        Let \(|\mu|=m\), and take \(x,x'\in B_1\), \(x\neq x'\). Then

        \[
        \begin{aligned}
          &D^\mu (-\Delta)^s_\lambda\bigl(w(1-\eta)\bigr)(x)
          -D^\mu (-\Delta)^s_\lambda\bigl(w(1-\eta)\bigr)(x')
        \\&=
          -c_{n,s}
        \int_{\mathbb R^n\setminus B_{3/2}}
        w(y)(1-\eta(y))
        \left[
        D_x^\mu (\frac{e^{-\lambda|x-y|}}{|x-y|^{n+2s}})
        -
        D_x^\mu (\frac{e^{-\lambda|x'-y|}}{|x'-y|^{n+2s}})
        \right]\,dy .
        \end{aligned}
        \]

        % \color{red}
        % By the smoothness of the kernel with respect to \(x\), together with the fact that \(|x-y|\ge \tfrac12\), we obtain
        % \[
        % \left|
        % \frac{e^{-\lambda|x-y|}}{|x-y|^{n+2s}}
        % -
        % \frac{e^{-\lambda|x'-y|}}{|x'-y|^{n+2s}}
        % \right|
        % \le
        % C|x-x'|(1+|y|)^{-n-2s-1}.
        % \]

        % Since \(|x-x'|\le 2\), it follows that $ |x-x'|\le C|x-x'|^\beta.$
        % Therefore,
        % \[
        %   \left|(-\Delta)^s_\lambda \phi(x)-(-\Delta)^s_\lambda \phi(x')\right|
        %   \le
        %   C|x-x'|^\beta
        %   \int_{\mathbb{R}^n}
        %   |w(y)|(1+|y|)^{-n-2s}\,dy
        %   \]
        %   \[
        %   \frac{\left|(-\Delta)^s_\lambda \phi(x)-(-\Delta)^s_\lambda \phi(x')\right|}{|x-x'|^\beta}
        %   \le
        %   C\left\|(1+|y|)^{-n-2s}w(y)\right\|_{L^1(\mathbb{R}^n)}
        % \]

        % \[
        %     \bigl[(-\Delta)^s_\lambda\bigl(w(1-\eta)\bigr)\bigr]_{C^\beta(B_1)}
        %   \le
        %     C\left\|(1+|y|)^{-n-2s}w(y)\right\|_{L^1(\mathbb{R}^n)}.
        % \]
  
        For every point \(\xi\) on the segment joining \(x\) and \(x'\), since \(B_1\) is
        convex, we have \(\xi\in B_1\). Hence, for
        \(y\in\mathbb R^n\setminus B_{3/2}\),
        $|\xi-y|\ge \frac12.$
        By the mean value theorem,
        \[
        \begin{aligned}
        \left|
        D_x^\mu (\frac{e^{-\lambda|x-y|}}{|x-y|^{n+2s}})
        -
        D_x^\mu (\frac{e^{-\lambda|x'-y|}}{|x'-y|^{n+2s}})
        \right| 
        \le
        |x-x'|
        \sup_{\xi\in[x,x']}
        \left|\nabla_xD_x^\mu (\frac{e^{-\lambda|\xi-y|}}{|\xi-y|^{n+2s}})\right|.
        \end{aligned}
        \]
        Using the preceding kernel derivative estimate~\eqref{daoshu} with \(|\mu|+1\), we get
        \[
        \sup_{\xi\in[x,x']}
        \left|\nabla_xD_x^\mu (\frac{e^{-\lambda|\xi-y|}}{|\xi-y|^{n+2s}})\right|
        \le
        C(1+|y|)^{-n-2s}.
        \]
        Thus
        \begin{align*}
          \left|
              D_x^\mu (\frac{e^{-\lambda|x-y|}}{|x-y|^{n+2s}})
            -
              D_x^\mu (\frac{e^{-\lambda|x'-y|}}{|x'-y|^{n+2s}})
          \right|
        &\le
            C|x-x'|(1+|y|)^{-n-2s}\\
        &\le
              C|x-x'|^\theta(1+|y|)^{-n-2s}.
        \end{align*}
        % Since \(x,x'\in B_1\), we have \(|x-x'|\le2\). Because \(0<\theta<1\),
        % \[
        % |x-x'|
        % =
        % |x-x'|^\theta |x-x'|^{1-\theta}
        % \le
        % 2^{1-\theta}|x-x'|^\theta.
        % \]
        % Therefore,
      %   \[
      %   \left|
      %  D_x^\mu (\frac{e^{-\lambda|x-y|}}{|x-y|^{n+2s}})
      %   -
      %   D_x^\mu (\frac{e^{-\lambda|x'-y|}}{|x'-y|^{n+2s}})
      %   \right|
      %   \le
      %   C|x-x'|^\theta(1+|y|)^{-n-2s}.
      %   \]

        So we obtain that 
        % \[
        % \begin{aligned}
        % |D^\mu F(x)-D^\mu F(x')|
        % &\le
        % C|x-x'|^\theta
        % \int_{\mathbb R^n\setminus B_{3/2}}
        % |w(y)|(1-\eta(y))(1+|y|)^{-n-2s}\,dy  \\
        % &\le
        % C|x-x'|^\theta
        % \left\|(1+|y|)^{-n-2s}w(y)\right\|_{L^1(\mathbb R^n)}.
        % \end{aligned}
        % \]
        % 
        % Therefore, 
        for all \(|\mu|=m\), 
        \begin{equation}\label{banfanshu}
            [D^\mu (-\Delta)^s_\lambda\bigl(w(1-\eta)\bigr)]_{C^\theta(B_1)}
        \le
            C\left\|(1+|y|)^{-n-2s}w(y)\right\|_{L^1(\mathbb R^n)}.
        \end{equation}

        Combining~\eqref{wuqiong} and~\eqref{banfanshu}, we conclude that
        \[
        \left\|
        (-\Delta)^s_\lambda\bigl(w(1-\eta)\bigr)
        \right\|_{C^\beta(\overline B_1)}
        \le
        C
        \left\|(1+|y|)^{-n-2s}w(y)\right\|_{L^1(\mathbb R^n)}.
        \]

  \color{black}
        From expression above we obtain that,
        \[
        \|\widetilde{h}\|_{C^\beta(\overline{B}_1)}
        \le
        C\left(
        \|h\|_{C^\beta(\overline{B}_2)}
        +
        \left\|(1+|y|)^{-n-2s}w(y)\right\|_{L^1(\mathbb{R}^n)}
        \right).
        \]

        Applying Lemma~\ref{tempered-RO2.2} to \(\widetilde{w}\), we obtain
        \[
        \|\widetilde{w}\|_{C^{\beta+2s}(\overline{B}_{1/2})}
        \le
        C\left(
        \|\widetilde{w}\|_{C^\beta(\mathbb{R}^n)}
        +
        \|\widetilde{h}\|_{C^\beta(\overline{B}_1)}
        \right)
        \]
        \[
        \le
        C\left(
        \|w\|_{C^\beta(\overline{B}_2)}
        +
        \|h\|_{C^\beta(\overline{B}_2)}
        +
        \left\|(1+|y|)^{-n-2s}w(y)\right\|_{L^1(\mathbb{R}^n)}
        \right).
        \]
      Since \(\widetilde{w}=w\) in \(B_{1/2}\), it follows that
        \[
        \|w\|_{C^{\beta+2s}(\overline{B}_{1/2})}
        \le
        C\left(
        \left\|(1+|y|)^{-n-2s}w(y)\right\|_{L^1(\mathbb{R}^n)}
        +
        \|w\|_{C^\beta(\overline{B}_2)}
        +
        \|h\|_{C^\beta(\overline{B}_2)}
        \right).
        \]

        This completes the proof.
    \end{proof}

\begin{lemma}\label{tempered2.4-general}
    Let \(n\ge1\), \(0<s<1\), \(\beta>0\), and \(\lambda>0\).
    Let \(\frac{1}{2} \le r < R \le 2\). Assume that \(h\in C^\beta(\overline B_R)\) and that
    \(w\in C^\infty(\mathbb R^n)\) solves
    $(-\Delta)^s_\lambda w=h$ in  $B_R.$
    Assume also that
    $\beta\notin\mathbb N$, $\beta+2s\notin\mathbb N.$
    Then
    \[
    \|w\|_{C^{\beta+2s}(\overline B_r)}
    \le
    C(R-r)^{-N}\left(
    \left\|(1+|x|)^{-n-2s}w(x)\right\|_{L^1(\mathbb R^n)}
    +
    \|w\|_{C^\beta(\overline B_R)}
    +
    \|h\|_{C^\beta(\overline B_R)}
    \right),
    \]
    where \(C=C(n,s,\beta,\lambda)\),  and  $N=N(n, s, \beta)$.
\end{lemma}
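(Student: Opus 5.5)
The plan is a covering-plus-rescaling argument built on Lemma~\ref{tempered2.4}, using the scaling law $(-\Delta)^s_\lambda\eta_\rho(x)=\rho^{2s}((-\Delta)^s_{\lambda/\rho}\eta)(\rho x)$ from Section~\ref{sec:algebraic}. Fix $x_0\in \overline B_r$ and set $\rho:=(R-r)/4\le 1/2$, so that $B_{2\rho}(x_0)\subset B_R$. Define $w_\rho(x):=w(x_0+\rho x)$. By the translation and scaling identities,
\[
(-\Delta)^s_{\lambda\rho}w_\rho(x)=\rho^{2s}h(x_0+\rho x)=:h_\rho(x)\qquad\text{in }B_2 .
\]
I will apply Lemma~\ref{tempered2.4} to $w_\rho$ with tempering parameter $\lambda\rho\in(0,\lambda]$.

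The first point to settle is that the constant in Lemma~\ref{tempered2.4} can be taken uniform for tempering parameters in $(0,\lambda]$. This is where I expect the main obstacle. Tracing that proof and the proof of Lemma~\ref{tempered-RO2.2}, the dependence on $\lambda$ enters in three places:
\begin{itemize}
\item through the bounds $1-e^{-\lambda|z|}\le\lambda|z|$ and $1-e^{-\lambda|z|}\le 1$ for the kernel of $\mathcal Q^s_\lambda$;
\item through the bound $e^{-\lambda|z|}\le1$;
\item through the kernel derivative bounds \eqref{daoshu} on $|x-y|\ge\tfrac12$.
\end{itemize}
Each of these is monotone or uniformly bounded for $\lambda'\in(0,\lambda]$. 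For example, $|D^\mu(e^{-\lambda'|z|}|z|^{-n-2s})|\le C_\mu(1+\lambda)^{|\mu|}|z|^{-n-2s}$ holds for $|z|\ge1/2$. So I will state and use that $C$ depends only on an upper bound for $\lambda$. If a cleaner route is wanted, one could alternatively apply the lemma with parameter $\lambda$ directly and treat the difference $(-\Delta)^s_{\lambda\rho}-(-\Delta)^s_\lambda$ as a $\mathcal Q$-type perturbation with integrable-type kernel, using Lemmas~\ref{QCA}--\ref{thm:Q-subcritical-holder}. I would first try the uniformity observation, since it is shorter.

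Next I collect the scaling of each term. Note the scaling identity gives $\|h_\rho\|_{C^\beta(\overline B_2)}\le \rho^{2s}\|h\|_{C^\beta(\overline B_R)}\le \|h\|_{C^\beta(\overline B_R)}$ for $\rho\le1$, and $\|w_\rho\|_{C^\beta(\overline B_2)}\le\|w\|_{C^\beta(\overline B_R)}$. For the weighted $L^1$ term, change variables $y=x_0+\rho x$:
\[
\int\frac{|w_\rho(x)|}{(1+|x|)^{n+2s}}\,dx=\rho^{-n}\int\frac{|w(y)|}{(1+|y-x_0|/\rho)^{n+2s}}\,dy .
\]
Since $|x_0|\le 1$ and $\rho\le1$, we have $1+|y|\le 2+|y-x_0|\le 2\rho^{-1}(\rho+|y-x_0|)$. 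Hence $(1+|y-x_0|/\rho)^{-n-2s}\le C\rho^{-n-2s}(1+|y|)^{-n-2s}$, and the weighted term is bounded by $C\rho^{-2n-2s}\|(1+|x|)^{-n-2s}w\|_{L^1}$.

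Lemma~\ref{tempered2.4} then gives $\|w_\rho\|_{C^{\beta+2s}(\overline B_{1/2})}\le C\rho^{-2n-2s}M$, where $M$ denotes the bracket in the statement. Undoing the scaling, $D^k w(x_0+\rho x)=\rho^{-k}D^kw_\rho(x)$. So every derivative of order $\le\lfloor\beta+2s\rfloor$, and the top Hölder seminorm, of $w$ on $B_{\rho/2}(x_0)$ costs at most a factor $\rho^{-(\beta+2s)}$. This yields
\[
\|w\|_{C^{\beta+2s}(\overline B_{\rho/2}(x_0))}\le C\rho^{-(2n+2s+\beta+2s)}M .
\]

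Finally, I globalize over $\overline B_r$. The sup-norm parts are pointwise, so they follow immediately. For the top seminorm with $x,y\in\overline B_r$: if $|x-y|<\rho/2$, both points lie in $B_{\rho/2}(x)$ and the local bound applies. If $|x-y|\ge\rho/2$, bound the difference quotient by $2\|D^kw\|_{L^\infty(B_r)}(\rho/2)^{-\theta}$, where $\theta$ is the fractional part of $\beta+2s$. This costs one more power of $\rho$. Collecting terms gives the claim with $N=2n+4s+\beta+1$, which depends only on $n,s,\beta$, and $\rho^{-N}=4^N(R-r)^{-N}$.
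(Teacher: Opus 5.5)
Your argument is correct, but it takes a different route from the paper's.

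\textbf{What the paper does.} It stays at the scale of the nested balls. It builds a cutoff $\eta$ with $\eta\equiv1$ on $B_{R_1}$ and $\|D^\mu\eta\|_{L^\infty}\le C_\mu(R-r)^{-|\mu|}$. It then estimates $\|\eta w\|_{C^\beta(\mathbb R^n)}$ and the tail $(-\Delta)^s_\lambda(w(1-\eta))$ in $C^\beta(\overline B_{R_0})$, each with explicit negative powers of $R-r$. Finally it invokes a ``modified'' version of Lemma~\ref{tempered-RO2.2} on $B_r\subset B_{R_0}$ with a loss $(R-r)^{-N}$. That modified estimate is stated without derivation.

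\textbf{What you do.} You never localize at scale $R-r$. Instead you rescale balls of radius $\rho=(R-r)/4$ to unit size, apply the unit-scale Lemma~\ref{tempered2.4} to $w_\rho$ with tempering parameter $\lambda\rho$, and cover $\overline B_r$. All the polynomial dependence on $R-r$ then comes transparently from scaling factors.

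\textbf{Comparison.} Your route actually derives the $(R-r)^{-N}$ loss; in effect it supplies the missing proof of the paper's modified lemma, which would itself have to be proved by this kind of rescaling. The price is that the constant of Lemma~\ref{tempered2.4} must be uniform for tempering parameters in $(0,\lambda]$. Your tracing of the $\lambda$-dependence justifies this correctly. The paper uses the same uniformity tacitly in Lemma~\ref{tempered2.10}, where it applies Lemmas~\ref{tempered2.4} and~\ref{tempered2.4-general} to $(-\Delta)^s_{\lambda R}$ with $R=d_{x_0}/2$.

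\textbf{One small slip.} From $x_0\in\overline B_r$ you only get $|x_0|<2$, not $|x_0|\le1$. Use instead $1+|y|\le 3+|y-x_0|\le 3\rho^{-1}(\rho+|y-x_0|)$. This even gives $(1+|y-x_0|/\rho)^{-1}\le 3(1+|y|)^{-1}$, so the weighted $L^1$ term costs only a factor $\rho^{-n}$. Your cruder bound is also fine, since it only changes the value of $N$.
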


\begin{proof}
    Set
    $d:=R-r>0,$
    $R_0:=r+\frac d3,$
    $R_1:=r+\frac{2d}{3}.$
    Then
    $
    r<R_0<R_1<R,
    $
    and
    $
    R_0-r=R_1-R_0=R-R_1=\frac d3:=d_1.
    $

    Choose \(\eta\in C_c^\infty(\mathbb R^n)\) such that
    \begin{equation}\label{etachoose}
    0\le \eta\le1\quad\text{in } \mathbb R^n,
    \qquad
    \eta\equiv1\quad\text{in }B_{R_1},
    \qquad
    \eta\equiv0\quad\text{in }\mathbb R^n\setminus B_R.
    \end{equation}

    We now construct the cut-off function~\eqref{etachoose} explicitly, we use the method of mollifiers; see, 
    for example, \cite[p.~713]{EvansPDE}.
    Define \(\psi:\mathbb R^n\to\mathbb R\) by
    \[
    \psi(x):=
    \begin{cases}
    D\exp\left(\dfrac{1}{\|x\|^2-1}\right),
    & \text{if } \|x\|<1,\\[6pt]
    0,
    & \text{if } \|x\|\ge 1,
    \end{cases}
    \]
    where \(D>0\) is selected so that
    $
    \int_{\mathbb R^n}\psi(x)\,dx=1.
    $
    The mollifier \(\psi\in C^\infty\) is positive with support in the unit ball about the origin. 

For $\varepsilon:=\frac{d_1}{4},$
set
$
    \psi_\varepsilon(x)
:=
    \frac{1}{\varepsilon^n}
    \psi\left(\frac{x}{\varepsilon}\right).
$
Define
\[
\eta(x)
:=
\bigl(\psi_\varepsilon * \mathbf 1_{B_{R_1+\varepsilon}}\bigr)(x)
=
\int_{B_{R_1+\varepsilon}}
\psi_\varepsilon(x-y)\,dy .
\]
Then \(\eta\in C_c^\infty(\mathbb R^n)\) and \(0\le \eta\le 1\).

We claim that
\[
\eta\equiv 1 \quad \text{in } B_{R_1},
\qquad
\eta\equiv 0 \quad \text{in } \mathbb R^n\setminus B_R.
\]
Indeed, if \(x\in B_{R_1}\), then
\[
B_\varepsilon(x)\subset B_{R_1+\varepsilon}.
\]
Since \(\operatorname{supp}\psi_\varepsilon\subset B_\varepsilon\), we have
\[
\eta(x)
=
\int_{\mathbb R^n}\psi_\varepsilon(x-y)\,dy
=
1.
\]
On the other hand, if \(\eta(x)\neq0\), then there exists
\(y\in B_{R_1+\varepsilon}\) such that
$
|x-y|<\varepsilon.
$
Hence
\[
|x|
\le |y|+|x-y|
<
R_1+2\varepsilon
=
R_1+\frac{d_1}{2}
<
R_1+d_1
=
R.
\]
Therefore,
  $\operatorname{supp}\eta\subset B_R,$
which gives
  $
    \eta\equiv0
    \quad \text{in } \mathbb R^n\setminus B_R.
  $

Finally, for every multi-index \(\mu\), we have
$
D^\mu\eta
=
D^\mu\psi_\varepsilon * \mathbf 1_{B_{R_1+\varepsilon}}.
$
Thus
\[
\|D^\mu\eta\|_{L^\infty(\mathbb R^n)}
\le
\|D^\mu\psi_\varepsilon\|_{L^1(\mathbb R^n)}.
\]
Since
    $
        D^\mu\psi_\varepsilon(x)
    =
        \varepsilon^{-n-|\mu|}
       (D^\mu\psi)\left(\frac{x}{\varepsilon}\right),
    $
it follows that
\[
\|D^\mu\psi_\varepsilon\|_{L^1(\mathbb R^n)}
=
\varepsilon^{-|\mu|}
\|D^\mu\psi\|_{L^1(\mathbb R^n)}.
\]
Therefore,
\[
\|D^\mu\eta\|_{L^\infty(\mathbb R^n)}
% \le
% C_{\mu} \varepsilon^{-|\mu|}
% =
% C_\mu d_1^{-|\mu|}
\le
C_\mu d^{-|\mu|},
\]
where \(C_\mu\) is independent of \(R\), and \(r\).
    
    Thus we  choose \(\eta\) so that for every multi-index \(\mu\),
    \begin{equation}\label{derivative}
    \|D^\mu\eta\|_{L^\infty(\mathbb R^n)}
    \le
    C_\mu d^{-|\mu|}.
    \end{equation}

    Define
    $
    \widetilde w:=\eta w =w-w(1-\eta),
    $
    then
    $
    \widetilde w\in C^\infty(\mathbb R^n),
    $
    $
    \widetilde w=w$ in $B_{R_1}.$
    By~\eqref{xianxin} , we have
    \[
    (-\Delta)^s_\lambda\widetilde w
    =
    (-\Delta)^s_\lambda w
    -
    (-\Delta)^s_\lambda\bigl(w(1-\eta)\bigr).
    \]
    Since \(B_{R_0}\subset B_R\) and \(w\) solves
    $(-\Delta)^s_\lambda w=h$ in $B_R,$
    we obtain, in \(B_{R_0}\),
    \[
    (-\Delta)^s_\lambda\widetilde w
    =
    \widetilde h
    :=
    h-
    (-\Delta)^s_\lambda\bigl(w(1-\eta)\bigr).
    \]

    Since \(\widetilde w=\eta w\) and \(\eta\) is supported in \(B_R\), the
    standard product estimate in H\"older spaces gives
    \[
    \|\widetilde w\|_{C^\beta(\mathbb R^n)}
    \le
      \|\eta\|_{C^\beta(\mathbb R^n)}
      \|w\|_{C^\beta(\overline B_R)}.
    \]

    Using~\eqref{derivative}, we have
    \[
    \|\eta\|_{C^\beta(\mathbb R^n)}
    \le
    C (R-r)^{-\beta}.
    \]
    
    % Therefore,
    % \[
    % \|\widetilde w\|_{C^\beta(\mathbb R^n)}
    % \le
    % C (R-r)^{-\beta}
    % \|w\|_{C^\beta(\overline B_R)}.
    % \]

    Consequently,
    \[
    \|\widetilde w\|_{C^\beta(\mathbb R^n)}
    \le
    C (R-r)^{-N}
    \|w\|_{C^\beta(\overline B_R)},
    \]
    where \(N=N(\beta)\).

    % We first estimate \(\widetilde w\), since \(\eta\) is smooth and supported in \(B_R\),
    % \[
    % \|\widetilde w\|_{C^\beta(\mathbb R^n)}
    % \le
    % C\|w\|_{C^\beta(\overline B_R)},
    % \]
    % where \(C\) depends on \(n,\beta,r,R\). More explicitly, the dependence on
    % \(R-r\) comes from the derivative bounds of the cut-off function.

    Next we estimate $
    \left\|
        (-\Delta)^s_\lambda\bigl(w(1-\eta)\bigr)
    \right\|_{C^\beta(\overline B_{R_0})}$.
    Let $F(x):=(-\Delta)^s_\lambda\bigl(w(1-\eta)\bigr)(x)$, and  $K_1(z):=\frac{e^{-\lambda|z|}}{|z|^{n+2s}}$.
    For every \(x\in B_{R_0}\),
    \[
      F(x)
    = 
      (-\Delta)^s_\lambda\bigl(w(1-\eta)\bigr)(x)
    =
      -c_{n,s}
      \int_{\mathbb R^n\setminus B_{R_1}}
      w(y)(1-\eta(y))
      \frac{e^{-\lambda|x-y|}}{|x-y|^{n+2s}}\,dy.
    \]

    For \(x\in B_{R_0}\) and \(y\in\mathbb R^n\setminus B_{R_1}\), we have
    $
    |x-y|
    \ge
    R_1-R_0
    =
    \frac d3,
    $
    hence no singularity of the kernel is encountered in the region of integration.

      For every multi-index \(\mu\),
      there exists \(C_\mu>0\), depending only on \(n,s,\lambda,\mu\), such that
      % \[
      %     |D^\mu K_1(x-y)|
      % \le
      %   C_\mu d^{-n-2s-|\mu|}(1+|y|)^{-n-2s}.
      % \]

% For every multi-index \(\mu\), there exists \(C_\mu>0\), depending only on
% \(n,s,\lambda,\mu\), such that
% \[
% |D^\mu K_1(z)|
% \le
% C_\mu |z|^{-{(n+2s)}-|\mu|},
% \qquad z\neq0.
% \]
% Hence
% \[
% |D_x^\mu K_1(x-y)|
% \le
% C_\mu |x-y|^{-{(n+2s)}-|\mu|}.
% \]

% Since \(x\in B_{R_0}\) and
% \(y\in\mathbb R^n\setminus B_{R_1}\), we have
% \[
% |x-y|\ge R_1-R_0=\frac d3.
% \]
% Moreover, since \(R_0<R \le 2\),
% \[
% 1+|y|
% \le
% 1+|x-y|+|x|
% \le
% 3(1+|x-y|).
% \]
% Therefore,
% \[
% \begin{aligned}
% |x-y|^{-{(n+2s)}-|\mu|}(1+|y|)^{(n+2s)}
% &\le
% C |x-y|^{-{(n+2s)}-|\mu|}(1+|x-y|)^{n+2s}        \\
% &=
% C |x-y|^{-|\mu|}
% \left(1+\frac1{|x-y|}\right)^{n+2s}.
% \end{aligned}
% \]
% Using \(|x-y|\ge d/3\) and \(0<d \le 3/2\), we get
% \[
% |x-y|^{-|\mu|}
% \le
% C d^{-|\mu|},
% \qquad
% \left(1+\frac1{|x-y|}\right)^{n+2s}
% \le
% C d^{-{(n+2s)}}.
% \]

% Consequently,
% \[
% |x-y|^{-{(n+2s)}-|\mu|}(1+|y|)^{(n+2s)}
% \le
% C d^{-{(n+2s)}-|\mu|}.
% \]
% Equivalently,
% \[
% |x-y|^{-{(n+2s)}-|\mu|}
% \le
% C d^{-{(n+2s)}-|\mu|}(1+|y|)^{-{(n+2s)}}.
% \]
% Thus
% \[
% |D_x^\mu K_1(x-y)|
% \le
% C_\mu d^{-n-2s-|\mu|}
% (1+|y|)^{-n-2s}.
% \]
    % Consequently, for every \(|\mu|\le m\),
    \begin{equation}\label{Lbounds}
        \|D^\mu F\|_{L^\infty(B_{R_0})}
    \le
        C d^{-n-2s-m}
        \left\|(1+|y|)^{-n-2s}w(y)\right\|_{L^1(\mathbb R^n)}.
    \end{equation}     

    It remains to estimate the H\"older seminorms. Let \(|\mu|=m\) and
    \(x,x'\in B_{R_0}\). By the mean value theorem,
    \[
    \begin{aligned}
      \left|
      D^\mu K_1(x-y)-D^\mu K_1(x'-y)
      \right|
    \le
      |x-x'|
      \sup_{\xi\in[x,x']}
      |\nabla D^\mu K_1(\xi-y)|.
    \end{aligned}
    \]
    Since \(\xi\in B_{R_0}\), we again have
    $
        |\xi-y|\ge \frac d3.
    $
    Thus
    \[
    |\nabla D^\mu K_1(\xi-y)|
    \le
    C d^{-n-2s-m-1}(1+|y|)^{-n-2s}.
    \]
    Using also \(R_0<2\), we have
    \[
    |x-x'|\le 4^{1-\theta}|x-x'|^\theta.
    \]

    Hence
    \[
    \left|
    D^\mu K_1(x-y)-D^\mu K_1(x'-y)
    \right|
    \le
    C |x-x'|^\theta
    d^{-n-2s-m-1}(1+|y|)^{-n-2s}.
    \]
    Substituting this into the integral representation of
    \(D^\mu F(x)-D^\mu F(x')\), we obtain
    \begin{equation}\label{FHolderseminorm}
        [D^\mu F]_{C^\theta(B_{R_0})}
    \le
         C d^{-n-2s-m-1}
        \left\|(1+|y|)^{-n-2s}w(y)\right\|_{L^1(\mathbb R^n)}.
    \end{equation}
    Combining~\eqref{Lbounds} and~\eqref{FHolderseminorm} gives
    \[
        \|(-\Delta)^s_\lambda\bigl(w(1-\eta)\bigr)\|_{C^\beta(\overline B_{R_0})}
    \le
        C d^{-N_1}
        \left\|(1+|y|)^{-n-2s}w(y)\right\|_{L^1(\mathbb R^n)},
    \]
    where one may take
    $N_1=n+2s+m+1$, $m=\lfloor\beta\rfloor.$

    Hence,
    \[
    \begin{aligned}
    \|\widetilde h\|_{C^\beta(\overline B_{R_0})}
    &\le
    \|h\|_{C^\beta(\overline B_{R_0})}
    +
    \left\|
    (-\Delta)^s_\lambda\bigl(w(1-\eta)\bigr)
    \right\|_{C^\beta(\overline B_{R_0})} \\
    &\le
    C d^{-N}
      \left(
      \|h\|_{C^\beta(\overline B_R)}
    +
      \left\|(1+|y|)^{-n-2s}w(y)\right\|_{L^1(\mathbb R^n)}
      \right).
    \end{aligned}
    \]
 
    Then we obtain modified Lemma~\ref{tempered-RO2.2}, that is 

    % Now we apply the scaled interior Schauder estimate corresponding to
    % Lemma~\ref{tempered-RO2.2}, we have
    \[
        \|\widetilde w\|_{C^{\beta+2s}(\overline B_r)}
    \le
    C d^{-N}
      \left(
        \|\widetilde w\|_{C^\beta(\mathbb R^n)}
      +
        \|\widetilde h\|_{C^\beta(\overline B_{R_0})}
      \right),
    \]
    where \(C=C(n,s,\beta,\lambda)\),  and  $N=N(n, s, \beta)$.

    Using the estimates above, we obtain
    \[
    \begin{aligned}
      \|\widetilde w\|_{C^{\beta+2s}(\overline B_r)}
    &\le
      C(R-r)^{-N}\left(
      \left\|(1+|y|)^{-n-2s}w(y)\right\|_{L^1(\mathbb R^n)}
    +
      \|w\|_{C^\beta(\overline B_R)}
    +
      \|h\|_{C^\beta(\overline B_R)}
    \right).
    \end{aligned}
    \]

    Finally, since \(\widetilde w=w\) in \(B_r\), we conclude that
    \[
    \|w\|_{C^{\beta+2s}(\overline B_r)}
    \le
    C(R-r)^{-N}\left(
      \left\|(1+|y|)^{-n-2s}w(y)\right\|_{L^1(\mathbb R^n)}
    +
      \|w\|_{C^\beta(\overline B_R)}
    +
      \|h\|_{C^\beta(\overline B_R)}
    \right).
    \]
    The proof is complete.
\end{proof}

\begin{lemma}\label{tempered2.10}
    Let \(U \subset \mathbb{R}^n\) be a bounded open set with nonempty boundary. Let
    $ 0<s<1$,  $0<\alpha<1$, $\alpha<2s,$ $\beta > 0 $
    and assume that neither $\beta$ nor $\beta+2s$ is an integer.
    Suppose that \(w\in C^\alpha(\mathbb R^n)\), \(h\in C^\beta(U)\),
    and
    \[
        (-\Delta)^s_\lambda w=h
        \qquad\text{in }\mathcal D'(U).
    \]
    If
    \[
        \|h\|^{(2s-\alpha)}_{\beta;U}<\infty,
    \]
    then there exists a constant
    $C=C(n,s,\alpha,\beta,\lambda,U)>0$
    such that
    \begin{equation}\label{eq:tempered2.7}
        \|w\|^{(-\alpha)}_{\beta+2s;U}
        \le
        C\left(
        \|w\|_{C^\alpha(\mathbb{R}^n)}
        +
        \|h\|^{(2s-\alpha)}_{\beta;U}
        \right).
    \end{equation}
\end{lemma}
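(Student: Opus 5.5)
The plan follows the scaling argument of Ros-Oton and Serra (their Lemma 2.10). The tempered structure enters only through Lemma~\ref{tempered2.4}, applied with a rescaled tempering parameter. We first treat smooth $w$ and then remove smoothness by mollification.

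\textbf{Step 1 (localization and rescaling).} Fix $x_0\in U$, set $\rho:=d_{x_0}/2$, and define
\[
w_\rho(y):=w(x_0+\rho y)-w(x_0),\qquad y\in\mathbb R^n .
\]
By the scaling property (iv) of the first proposition of Section~\ref{sec:algebraic}, together with (ii) and (iii), we have
\[
(-\Delta)^s_{\lambda\rho}w_\rho(y)=\rho^{2s}h(x_0+\rho y)=:h_\rho(y)\qquad\text{in }B_2 .
\]
Here $B_2$ is mapped into $B_{2\rho}(x_0)\subset U$ after possibly replacing $\rho$ by $d_{x_0}/4$. The new tempering parameter $\lambda\rho$ lies in $[0,\lambda\operatorname{diam}U]$.

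\textbf{Step 2 (apply the interior estimate).} We apply Lemma~\ref{tempered2.4} with parameter $\lambda\rho$. The three terms on the right-hand side are estimated as follows.
\begin{itemize}
\item Since $|w_\rho(y)|\le [w]_{C^\alpha}\rho^\alpha|y|^\alpha$ and $\alpha<2s$, the weighted $L^1$ tail is bounded by $C\rho^\alpha\|w\|_{C^\alpha(\mathbb R^n)}$.
\item In the same way, $\|w_\rho\|_{C^\beta(\overline B_2)}$ is controlled. For $\beta<1$ we interpolate, or we first take $\beta$ small and then bootstrap in $\beta$, as in~\cite{RosOtonSerra2014}. Alternatively, we use Lemma~\ref{tempered2.4} with $C^\alpha$ in place of $C^\beta$ in the middle term, which follows from the same proof via Proposition~\ref{RO2.2}.
\item The weighted norm gives $\|h_\rho\|_{C^\beta(\overline B_2)}\le C\rho^{\alpha}\|h\|^{(2s-\alpha)}_{\beta;U}$. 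This holds because on $B_{2\rho}(x_0)$ we have $d_x\simeq\rho$, and $D^l h$ picks up the factor $\rho^{2s+l}$.
\end{itemize}
Undoing the scaling then gives
\[
\rho^{\beta+2s-\alpha}[w]_{C^{\beta+2s}(B_{\rho/2}(x_0))}+\sum_l\rho^{l-\alpha}\|D^lw\|_{L^\infty(B_{\rho/2}(x_0))}\le C\bigl(\|w\|_{C^\alpha}+\|h\|^{(2s-\alpha)}_{\beta;U}\bigr).
\]

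\textbf{Step 3 (globalize the seminorm).} Take $x,y\in U$ with $d_x\le d_y$. If $|x-y|<d_x/8$, both points lie in one ball $B_{\rho/2}(x_0)$ and Step~2 applies directly. Otherwise we use the triangle inequality with the sup bounds on $D^kw$: the term $d_{x,y}^{\beta+2s-\alpha}|D^kw(x)-D^kw(y)|/|x-y|^{\beta'}$ is bounded by $C\,d^{\beta+2s-\alpha-k-\beta'}d^{k-\alpha}\cdot d^{\alpha-k}$, which is $O(1)$. The $C^\alpha(U)$ part of the norm $\|\cdot\|^{(-\alpha)}$ is just $\|w\|_{C^\alpha}$.

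\textbf{Step 4 (removing smoothness).} Lemma~\ref{tempered2.4} requires $w\in C^\infty$. We therefore mollify, setting $w_\epsilon=w*\psi_\epsilon$. Then $(-\Delta)^s_\lambda w_\epsilon=h*\psi_\epsilon$ on $U_\epsilon=\{d>\epsilon\}$, and the weighted norms of $h*\psi_\epsilon$ on $U_\epsilon$ are uniformly controlled by those of $h$. We apply the estimate on $U_\epsilon$ and let $\epsilon\to0$, using local uniform convergence together with lower semicontinuity of the seminorms.

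\textbf{Main obstacle.} The main difficulty is uniformity of the constant in Lemma~\ref{tempered2.4} with respect to the tempering parameter $\lambda\rho\in(0,\lambda\operatorname{diam}U]$. That lemma was stated for fixed $\lambda$. I would revisit its proof and check that every constant depends on $\lambda$ only through bounds such as $1-e^{-\lambda|z|}\le\lambda|z|$ and $e^{-\lambda|z|}\le1$. These are monotone in $\lambda$, so the constant is uniform for $\lambda$ in a compact interval, and this explains the dependence of $C$ on $U$.
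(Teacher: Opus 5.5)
Your proposal follows the paper's proof essentially step for step: rescaling with tempering parameter $\lambda\rho$, Lemma~\ref{tempered2.4}, near/far-pair globalization, and mollification. The one sub-step you leave open, the middle term for $\beta>\alpha$, is closed in the paper by interpolating between $C^\alpha$ and $C^{\beta+2s}$ and absorbing over nested balls using the $(R-r)^{-N}$ bound of Lemma~\ref{tempered2.4-general}. Your finite bootstrap through that lemma works equally well, but interpolation alone, without an absorption step, would not close. Also, for $k=0$ the far pairs must be estimated with $[w]_{C^\alpha(\mathbb R^n)}$ rather than with sup bounds.

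Your concern about uniformity in $\lambda\rho\in(0,\lambda\operatorname{diam}U]$ is well founded: the paper applies Lemma~\ref{tempered2.4} to the rescaled function without addressing it. Checking that the constants stay bounded for tempering parameters in a compact range is the right justification.
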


  \begin{proof}
        We first prove the estimate under the additional assumption
        \(w\in C^\infty(\mathbb R^n)\). The removal of this auxiliary
        smoothness assumption is carried out at the end of the proof.
        We first estimate the $C^{\beta+2s}$ norm of \(w\) in balls \(B_R(x_0)\).
        
        Fix \(x_0 \in U\) and set$R:=\frac{d_{x_0}}{2},$
        so that \(B_R(x_0)\subset U\). Define $\widetilde{w}(y):=w(x_0+Ry)-w(x_0).$
        We claim that
        \[
        \|\widetilde{w}\|_{C^\alpha(B_1)}
        \le
        R^\alpha [w]_{C^\alpha(\mathbb{R}^n)}.
        \]

        Indeed, for any \(y_1,y_2\in B_1\),
        $
        \widetilde{w}(y_1)-\widetilde{w}(y_2)
        =
        w(x_0+Ry_1)-w(x_0+Ry_2),
        $
        and therefore
        \[
        |w(x_0+Ry_1)-w(x_0+Ry_2)|
        \le
        [w]_{C^\alpha(\mathbb{R}^n)}\,|Ry_1-Ry_2|^\alpha
        =
        [w]_{C^\alpha(\mathbb{R}^n)}\,R^\alpha |y_1-y_2|^\alpha.
        \]
        Hence,
        \begin{equation}\label{wwai1}
        [\widetilde{w}]_{C^\alpha(B_1)}
        \le
        R^\alpha [w]_{C^\alpha(\mathbb{R}^n)}.
        \end{equation}
        Moreover, for every \(y\in B_1\),
        \[
        |\widetilde{w}(y)|
        =
        |w(x_0+Ry)-w(x_0)|
        \le
        [w]_{C^\alpha(\mathbb{R}^n)}\,|Ry|^\alpha
        \le
        R^\alpha [w]_{C^\alpha(\mathbb{R}^n)}.
        \]
        Thus,
        \begin{equation}\label{wwai2}
        \sup_{y\in B_1} |\widetilde{w}(y)|
        \le
        R^\alpha [w]_{C^\alpha(\mathbb{R}^n)}.
        \end{equation}

        Combining the above two estimates~\eqref{wwai1} and ~\eqref{wwai2}, we conclude that
        \begin{equation}\label{wwaialpfa}
          \|\widetilde{w}\|_{C^\alpha(B_1)}
          \le
          R^\alpha [w]_{C^\alpha(\mathbb{R}^n)}
        \end{equation}
        and 
        \[
        \left\|(1+|y|)^{-n-2s}\widetilde{w}(y)\right\|_{L^1(\mathbb{R}^n)}
        \le
        C(n,s)\,R^\alpha [w]_{C^\alpha(\mathbb{R}^n)}.
        \]

        Note that, for \(y\in B_1\),
        \[
        (-\Delta)^s_{\lambda R}\widetilde{w}(y)
        =
        R^{2s}(-\Delta)^s_{\lambda}w(x_0+Ry).
        \]

        Hence
        \[
        \|(-\Delta)^s_{\lambda R}\widetilde{w}\|_{L^\infty(B_1)}
        =
        R^{2s}\|(-\Delta)^s_{\lambda}w\|_{L^\infty(B_R(x_0))}
        \le
        R^\alpha \|(-\Delta)^s_{\lambda}w\|^{(2s-\alpha)}_{\beta;U},
        \]
        and
        \[
        [(-\Delta)^s_{\lambda R}\widetilde{w}]_{C^\beta(B_1)}
        =
        R^{2s+\beta}[(-\Delta)^s_{\lambda}w]_{C^\beta(B_R(x_0))}
        \le
        R^\alpha \|(-\Delta)^s_{\lambda}w\|^{(2s-\alpha)}_{\beta;U}.
        \]

        Therefore,
        \[
        \|(-\Delta)^s_{\lambda R}\widetilde{w}\|_{C^\beta(B_1)}
        \le
        C R^\alpha \|(-\Delta)^s_{\lambda}w\|^{(2s-\alpha)}_{\beta;U}.
        \]

        If $\beta \le \alpha$, 
        then the embedding
        \(C^\alpha(B_1)\hookrightarrow C^\beta(B_1)\), together with
        Lemma~\ref{tempered2.4}, gives

        % using Lemma~\ref{tempered2.4} and  the estimate \(\widetilde{w}\in C^\beta(B_1)\) follows 
        % directly from \(\widetilde{w}\in C^\alpha(B_1)\),
        % we obtain that 
        \[
        \|\widetilde{w}\|_{C^{\beta+2s}(\overline{B}_{1/2})}
        \le
        C R^\alpha
        \left(
        [w]_{C^\alpha(\mathbb{R}^n)}
        +
        \|(-\Delta)^s_\lambda w\|^{(2s-\alpha)}_{\beta;U}
        \right),
        \]

        % When \(\beta>\alpha\), the \(C^\beta\) regularity of \(\widetilde{w}\) is obtained by a finite bootstrap argument. More precisely, choose a finite sequence
        % $  \alpha=\sigma_0<\sigma_1<\cdots<\sigma_m=\beta$
        % such that
        % $\sigma_{j+1}<\sigma_j+2s,\,\text{for } j=0,1,\dots,m-1.$
        % By iteratively applying Proposition~\ref{tempered-RO2.2} on a nested family of concentric balls, 
        % one improves the regularity successively from \(C^{\sigma_j}\) to \(C^{\sigma_j+2s}\).  
        % Thus, using Lemma~\ref{tempered2.4} we obtain that

        If \(\beta>\alpha\), we set
    \[
    M_0:=
    [w]_{C^\alpha(\mathbb R^n)}
    +
    \|(-\Delta)^s_\lambda w\|^{(2s-\alpha)}_{\beta;U} 
    \]
and 
    \[
    A(\rho):=
    \|\widetilde w\|_{C^{\beta+2s}(\overline B_\rho)}.
    \]
    For every
    \[
    \frac12\le r<R<2,
    \]
    apply Lemma~\ref{tempered2.4-general} in the balls
    \(B_r \subset B_R\), gives
    \begin{equation}\label{local form of lemma 4.11}
    \|\widetilde w\|_{C^{\beta+2s}(\overline B_r)}
    \le
    C(R-r)^{-N}
    \left(
    R^\alpha M_0
    +
    \|\widetilde w\|_{C^\beta(\overline B_R)}
    \right),
    \end{equation}
where \(C>0\) and \(N>0\) depend only on the fixed parameters.

Since
$
\alpha<\beta<\beta+2s,
$
the interpolation inequality gives, for every \(\varepsilon>0\),
    \begin{equation}\label{wwaichazhi}
      \|\widetilde w\|_{C^\beta(\overline B_R)}
      \le
      \varepsilon
      \|\widetilde w\|_{C^{\beta+2s}(\overline B_R)}
      +
      C_\varepsilon
      \|\widetilde w\|_{C^\alpha(\overline B_R)}.
    \end{equation}

    By the same  scaling argument as~\eqref{wwaialpfa}, we obtain
    \begin{equation}\label{wwaiBR}
        \|\widetilde w\|_{C^\alpha(\overline B_R)}
    \le
        C R^\alpha [w]_{C^\alpha(\mathbb{R}^n)}
    \le
        C R^\alpha M_0,
    \end{equation}
    combining~\eqref{wwaichazhi} and~\eqref{wwaiBR}, we obtain
    \[
    \|\widetilde w\|_{C^\beta(\overline B_R)}
    \le
    \varepsilon \|\widetilde w\|_{C^{\beta+2s}(\overline B_R)}
    +
    C_\varepsilon R^\alpha M_0.
    \]
    Consequently, the local estimate in~\eqref{local form of lemma 4.11} yields
    % Therefore, by~\eqref{local form of lemma 4.11}, we have
      \[
      \|\widetilde w\|_{C^{\beta+2s}(\overline B_r)}
      \le
      C(R-r)^{-N}\varepsilon \|\widetilde w\|_{C^{\beta+2s}(\overline B_R)}
      +
      C(R-r)^{-N}C_\varepsilon R^\alpha M_0.
      \]

      Now assume that, for \(1/2\le r<R<2\) and fix $ N = N_{0}$, we have obtained
      \begin{equation}\label{houCzong}
      A(r)
      \le
      C_{1}(R-r)^{-N_{0}}\varepsilon A(R)
      +
      C_{2}(R-r)^{-N_{0}}C_\varepsilon R^\alpha M_0.
      \end{equation}
      where \(\varepsilon>0\), \(N_0>0\), and \(C_1,C_2\) 
      are independent of fixed parameters.

      For a fixed number \(\delta\in(0,1)\), choose
      \begin{equation}\label{varepsilonxuanqu}
          \varepsilon
        =
          \frac{\delta}{C_{1}}(R-r)^{N_0}.
      \end{equation}
      Then
      \begin{equation}\label{houC_1}
          C_{1}(R-r)^{-N_{0}}\varepsilon=\delta.
      \end{equation}

      Moreover, by the form of the interpolation inequality, 
      there exists
      \(q>0\)   such that
      \[
      C_\varepsilon\le \varepsilon^{-q}.
      \]

      Hence
      \begin{equation}\label{houC_2}
          C_{2}(R-r)^{-N_{0}}C_\varepsilon
       \le
          C_\delta (R-r)^{-\widetilde N},
     \end{equation}
      where
      $
      \widetilde N:=(1+q)N_0
      $
      and
      $
      C_\delta
      :=
      C_{2}\left(\frac{C_1}{\delta}\right)^q.
      $
      Thus by~\eqref{houCzong}, \eqref{houC_1} and~\eqref{houC_2}, we obtain
      \[
      A(r)
      \le
      \delta A(R)
      +
      C_\delta (R-r)^{-\widetilde N}R^\alpha M_0.
      \]

      We now fix \(\delta>0\) sufficiently small. More precisely, choose
      $
      \delta_0:=2^{-(\widetilde N+1)},
      $
      then
      $
      \delta_0 2^{\widetilde N}<1.
      $

      Let
      \[
      r_j:=\frac12+\frac32(1-2^{-j}),
      \qquad j=0,1,2,\dots.
      \]
      Then
      \[
      r_0=\frac12,
      \qquad
      r_j\uparrow 2,
      \]
      and
      \[
      r_{j+1}-r_j=\frac32\,2^{-(j+1)}.
      \]

        Applying the preceding estimate with \(r=r_j\) and \(R=r_{j+1}\), we get
        \[
        A(r_j)
        \le
        \delta_0 A(r_{j+1})
        +
        C_{\delta_0}2^{\widetilde N j}R^\alpha M_0.
        \]
        Iterating yields
        \[
        A(r_0)
        \le
        \delta_0^m A(r_m)
        +
        C_{\delta_0}R^\alpha M_0
        \sum_{j=0}^{m-1}
        \left(\delta_0 2^{\widetilde N}\right)^j.
        \]
        Since
        $
        \delta_0 2^{\widetilde N}<1,
        $
        the series is uniformly bounded. Moreover, since \(A(r_m)\le A(2)<\infty\), and hence
        \[
        \delta_0^m A(r_m)\to0
        \qquad\text{as }m\to\infty.
        \]
        Letting \(m\to\infty\), we conclude that
        \[
        A(r_0)
        \le
        C R^\alpha M_0.
        \]
        Since \(r_0=1/2\), this gives 
        \[
        \|\widetilde{w}\|_{C^{\beta+2s}(\overline{B}_{1/2})}
        \le
        C R^\alpha
        \left(
        [w]_{C^\alpha(\mathbb{R}^n)}
        +
        \|(-\Delta)^s_\lambda w\|^{(2s-\alpha)}_{\beta;U}
        \right),
        \]
        where the constant \(C\) depends only on \(n\), \(s\), \(\alpha\), \(\beta\), \(\lambda\), and  \(U\). 
        
        Combining the cases \(\beta\le \alpha\) and \(\beta>\alpha\), we conclude that
        \[
        \|\widetilde{w}\|_{C^{\beta+2s}(\overline{B}_{1/2})}
        \le
        C R^\alpha
        \left(
        [w]_{C^\alpha(\mathbb{R}^n)}
        +
        \|(-\Delta)^s_\lambda w\|^{(2s-\alpha)}_{\beta;U}
        \right).
        \]
        \color{black}
        Scaling back we obtain
        \begin{equation*}
            \sum_{l=1}^{k} R^{\,l-\alpha}\,\|D^l w\|_{L^\infty(B_{R/2}(x_0))}
            \;+\;
            R^{\,2s+\beta-\alpha}\,[w]_{C^{\beta+2s}(\overline{B}_{R/2}(x_0))}
        \end{equation*}  
        \begin{equation}\label{eq:tempered2.8}
            \le
            C\left(
            \|w\|_{C^\alpha(\mathbb{R}^n)}
            +
            \|(-\Delta)^s_\lambda w\|^{(2s-\alpha)}_{\beta;U}
            \right),
        \end{equation}
      where \(k\) denotes the greatest integer less than \(\beta+2s\). This bound holds, with the same constant \(C\), for each ball \(B_R(x_0)\), \(x_0\in U\), where \(R=d_{x_0}/2\).
     
        Next we claim that if \eqref{eq:tempered2.8} holds for each ball \(B_{d_x/2}(x)\), \(x\in U\), 
        then \eqref{eq:tempered2.7} holds. It is clear that this already yields
        \begin{equation}\label{eq:tempered2.9}
          \sum_{l=1}^{k}\sup_{x\in U}\left(d_x^{\,l-\alpha}|D^l w(x)|\right)
          \le
          C\left(\|w\|_{C^\alpha(\mathbb{R}^n)}+\|(-\Delta)^s_\lambda w\|^{(2s-\alpha)}_{\beta;U}\right),
        \end{equation}
        where \(k\) is the greatest integer less than \(\beta+2s\).

      To prove this claim we only have to control \([w]^{(-\alpha)}_{\beta+2s;U}\) --- 
      see Definition~\ref{Definition_tempered1.3}. Let \(\gamma\in(0,1)\) be such that \(\beta+2s=k+\gamma\).
      We next take any \(x,y\in U\). Without loss of generality, we may assume that \(d_x \le d_y\). Then
      $d_{x,y}=d_x$ and \(|x-y|\ge d_x/2\). We next estimate
      \[
      d_x^{\beta+2s-\alpha}\frac{|D^k w(x)-D^k w(y)|}{|x-y|^\gamma}.
      \] This will yield the bound for \([w]^{(-\alpha)}_{\beta+2s;U}\), because if \(|x-y|<d_x/2\) then \(y\in B_{d_x/2}(x)\), and that case is done.

      We proceed differently in the cases \(k=0\) and \(k\ge 1\). If \(k=0\), then
      \[
      d_x^{\beta+2s-\alpha}\frac{w(x)-w(y)}{|x-y|^{2s+\beta}}
      =
      \left(\frac{d_x}{|x-y|}\right)^{\beta+2s-\alpha}
      \frac{w(x)-w(y)}{|x-y|^\alpha}
      \le
      C\|w\|_{C^\alpha(\mathbb{R}^n)}.
      \]

      If \(k\ge 1\), then
      \begin{align*}
         d_x^{\beta+2s-\alpha}\frac{|D^k w(x)-D^k w(y)|}{|x-y|^\gamma}
      &\le
          \left(\frac{d_x}{|x-y|}\right)^\gamma
          d_x^{\beta+2s-\alpha-\gamma}|D^k w(x)-D^k w(y)|\\
      &\le
         C\|w\|^{(-\alpha)}_{k;U},
     \end{align*}
      where we have used that \(\beta+2s-\alpha-\gamma=k-\alpha\).

      Noting that for \(x\in B_R(x_0)\) 
      we have \(R\le d_{x}\le 3R\), \eqref{eq:tempered2.7} follows from \eqref{eq:tempered2.8}, \eqref{eq:tempered2.9}.
      This proves \eqref{eq:tempered2.7} in the smooth case.

      We now remove the auxiliary smoothness assumption by standard
      mollification \cite{EvansPDE,11}. Let
      \(\rho\in C_c^\infty(B_1)\) be a standard mollifier, set
      \(\rho_\varepsilon(x):=\varepsilon^{-n}\rho(x/\varepsilon)\), and
      define
      \[
          w_\varepsilon:=w*\rho_\varepsilon.
      \]
      Since the tempered fractional Laplacian is translation invariant,
      convolution commutes with the operator in the sense of
      distributions. Consequently, if
      \[
          U_\varepsilon
          :=
          \{x\in U:\operatorname{dist}(x,\partial U)>\varepsilon\},
      \]
      then
      \[
          (-\Delta)^s_\lambda w_\varepsilon
          =
          h*\rho_\varepsilon
          =:h_\varepsilon
          \qquad\text{in }U_\varepsilon.
      \]
      Moreover,
      \[
          \|w_\varepsilon\|_{C^\alpha(\mathbb R^n)}
          \le
          \|w\|_{C^\alpha(\mathbb R^n)}.
      \]

      Fix \(x_0\in U\), put \(d_0:=\operatorname{dist}(x_0,\partial U)\),
      and take \(0<\varepsilon<d_0/8\). On
      \(B_{3d_0/4}(x_0)\), the distance to \(\partial U\) is comparable
      with \(d_0\). The standard local estimates for convolution in
      noninteger H\"older spaces therefore give, with
      \(m=\lfloor\beta\rfloor\),
      \[
      \begin{aligned}
          &\sum_{l=0}^{m}
          d_0^{\,l+2s-\alpha}
          \|D^l h_\varepsilon\|_{L^\infty(B_{3d_0/4}(x_0))}
          +
          d_0^{\,\beta+2s-\alpha}
          [h_\varepsilon]_{C^\beta(B_{3d_0/4}(x_0))}
          \\
          &\qquad\le
          C\|h\|^{(2s-\alpha)}_{\beta;U},
      \end{aligned}
      \]
      with \(C\) independent of \(\varepsilon\) and \(x_0\). Applying
      the already proved smooth local estimate to \(w_\varepsilon\) on
      these balls yields the estimates corresponding to
      \eqref{eq:tempered2.8}, with a constant independent of
      \(\varepsilon\).

      Since \(w\in C^\alpha(\mathbb R^n)\),
      \(w_\varepsilon\to w\) uniformly in \(\mathbb R^n\). The uniform
      local \(C^{\beta+2s}\)-bounds and the lower semicontinuity of
      H\"older seminorms under uniform convergence imply
      \[
          w\in C^{\beta+2s}_{\mathrm{loc}}(U)
      \]
      and give the same local estimates for \(w\). Finally, applying
      the near-pair/far-pair argument used above, now to \(w\), gives
      \[
          \|w\|^{(-\alpha)}_{\beta+2s;U}
          \le
          C\left(
          \|w\|_{C^\alpha(\mathbb R^n)}
          +
          \|h\|^{(2s-\alpha)}_{\beta;U}
          \right).
      \]
      This proves the lemma for distributional solutions.
  \end{proof}

Finally, we prove Theorem~\ref{Pro:1.4}.
    \begin{proof}[Proof of Theorem~\ref{Pro:1.4}]
      The equation in \eqref{eq:1.1} gives
      \[
          (-\Delta)^s_\lambda u=f
          \qquad\text{in }\mathcal D'(\Omega).
      \]
      Since \(0<s<1\), the choice \(\alpha=s\) satisfies
      \(0<\alpha<1\) and \(\alpha<2s\). Hence
      Lemma~\ref{tempered2.10}, with \(U=\Omega\), \(w=u\), and \(h=f\),
      yields
      \[
          \|u\|_{\beta+2s;\Omega}^{(-s)}
          \le
          C\left(
          \|u\|_{C^s(\mathbb R^n)}
          +
          \|f\|_{\beta;\Omega}^{(s)}
          \right),
      \]
      which is the desired estimate.
    \end{proof}

\section{Higher-order weighted estimates: proof of Theorem~\ref{lemma1.6-fourth}}\label{sec:higher-order}

This final section completes the proof of the higher-order regularity
result. We apply the weighted Schauder estimate from
Section~\ref{sec:schauder} with the
exponent prescribed in Theorem~\ref{lemma1.6-fourth}, verify that the
relevant H\"older orders avoid the integer endpoints, and then read off
the pointwise decay of the derivatives of \(u\) and \(f\) in terms of the
distance to \(\partial\Omega\).

      \begin{proof}[Proof of Theorem~\ref{lemma1.6-fourth} ]
      We first check the ranges of \(\beta\) and \(\beta+2s\). Since
      $
          \beta=4-2s+\gamma,
      $
      we have
      $
          \beta+2s=4+\gamma.
      $

      If \(0<s\le \frac12\), then \(0<\gamma<2s\). Hence
      $
          \beta=4-2s+\gamma>4-2s\ge 3.
      $
      Since \(\gamma>0\), in fact
      $
          \beta>3.
      $
      Moreover, from \(\gamma<2s\), we get
      $
          \beta=4-2s+\gamma<4.
      $
      Therefore,
      $
          3<\beta<4.
      $
      Also, since \(0<\gamma<2s\le 1\),
      $
          4<\beta+2s=4+\gamma<5.
      $

      If \(\frac12<s<1\), then \(0<\gamma<2s-1\). Therefore
      $
          \beta=4-2s+\gamma>4-2s>2,
      $
      and
      $
          \beta=4-2s+\gamma<4-2s+2s-1=3.
      $
      Thus
      $
          2<\beta<3.
      $
      Moreover, since \(0<\gamma<2s-1<1\), we have
      $
          4<\beta+2s=4+\gamma<5.
      $

      Hence, in both cases, our hypotheses imply that $\beta>2$, and $4<\beta+2s<5.$
      In particular, neither \(\beta\) nor \(\beta+2s\) is an integer.

      By Theorem~\ref{Pro:1.4}, we have
      \[
          \|u\|_{\beta+2s;\Omega}^{(-s)}
          \le
          C\left(
              \|u\|_{C^s(\mathbb R^n)}
              +
              \|f\|_{\beta;\Omega}^{(s)}
          \right).
      \]
      Moreover, by~\ref{pro:1.1}, we obtain that
      \[
          \|u\|_{\beta+2s;\Omega}^{(-s)}
          \le
          C\left(
              \|f\|_{L^\infty(\Omega)}
              +
              \|f\|_{\beta;\Omega}^{(s)}
          \right).
      \]
      % Set
      % \[
      %     M:=
      %     \|f\|_{L^\infty(\Omega)}
      %     +
      %     \|f\|_{\beta;\Omega}^{(s)}.
      % \]

      Since \(4<\beta+2s<5\) and the Definition~\ref{Definition_tempered1.3}  of the weighted H\"older norm gives
      \[
          \sum_{1\le |\nu|\le 4}
          \sup_{x\in\Omega}
          \left(
              \delta(x)^{|\nu|-s}|D^\nu u(x)|
          \right)
          \le
          C\|u\|_{\beta+2s;\Omega}^{(-s)}
          \le
          C( \|f\|_{L^\infty(\Omega)}
          +
          \|f\|_{\beta;\Omega}^{(s)}).
      \]
      Consequently, for every multi-index \(\nu\) with \(1\le |\nu|\le 4\),
      \[
          |D^\nu u(x)|
          \le
          C( \|f\|_{L^\infty(\Omega)}
          +
          \|f\|_{\beta;\Omega}^{(s)})\delta(x)^{s-|\nu|}
          \le
          C\delta(x)^{s-|\nu|}
          \qquad \text{for }x\in\Omega.
      \]

      It remains to treat the case \(|\nu|=0\). Since \(u=0\) in
      \(\mathbb R^n\setminus\Omega\) and \(u\in C^s(\mathbb R^n)\), for every
      \(x\in\Omega\), we may choose \(x_0\in\partial\Omega\) such that
      \[
          |x-x_0|=\delta(x).
      \]
      Then
      \[
          |u(x)|
          =
          |u(x)-u(x_0)|
          \le
          [u]_{C^s(\mathbb R^n)}|x-x_0|^s
          \le
          C\|f\|_{L^\infty(\Omega)}\delta(x)^s.
      \]

      Finally, since \(\beta>2\) and the Definition~\ref{Definition_tempered1.3}  of the weighted H\"older norm gives
      \[
          \sum_{0\le |\nu|\le 2}
          \sup_{x\in\Omega}
          \left(
              \delta(x)^{|\nu|+s}|D^\nu f(x)|
          \right)
          \le
          C\|f\|_{\beta;\Omega}^{(s)}.
      \]
      Hence, for every multi-index \(\nu\) with \(0\le |\nu|\le 2\),
      \[
          |D^\nu f(x)|
          \le
          C\|f\|_{\beta;\Omega}^{(s)}
          \delta(x)^{-s-|\nu|}
          \le C \delta(x)^{-s-|\nu|}
          \qquad \text{for }x\in\Omega.
      \]
      The proof is complete.
      \end{proof}
\section*{Acknowledgments}
% =========================================================

M.H. was supported by  Distinguished Young Scholars of Gansu Province (23JRRA1020), 
Longyuan Young Talents Project of Gansu Province and NSFC 12471381.
J.Zhao was supported by Gansu Provincial Department of Education: 2026 Graduate Innovation Star Project (2026CXZX-152).

% =========================================================
% Declaration of competing interest
% =========================================================
\section*{Declaration of competing interest}

The authors declare that they have no known competing financial
interests or personal relationships that could have appeared to
influence the work reported in this paper.

% =========================================================
% Data availability
% =========================================================
\section*{Data availability}

No data was used for the research described in the article.

\newpage

\end{document}